\newtheorem{theorem}{Theorem}[section]
\newtheorem{lemma}[theorem]{Lemma}
\newtheorem{proposition}[theorem]{Proposition}
\newtheorem{remark}[theorem]{Remark}
\newenvironment{proof}[1][Proof]{\begin{trivlist}
\item[\hskip \labelsep {\bfseries #1}]}{\end{trivlist}}
\newenvironment{definition}[1][Definition]{\begin{trivlist}
\item[\hskip \labelsep {\bfseries #1}]}{\end{trivlist}}
\newcommand{\modd}[1]{\vert #1\vert^2}
\newcommand{\nablag}{\nabla^{\gamma}}
\newcommand{\zetaa}{\zeta_{(\alpha)}}
\newcommand{\psia}{\psi_{(\alpha)}}
\newcommand{\zetak}{\zeta_{(k)}}
\newcommand{\psik}{\psi_{(k)}}
\newcommand{\psiak}{\psi_{(\alpha,k)}}
\newcommand{\zetaak}{\zeta_{(\alpha,k)}}
\newcommand{\wsurf}{\underline{w}}
\newcommand{\Vu}{\overline{V}}
\newcommand{\Vd}{\underline{V}}
\newcommand{\ds}{\displaystyle}
\newcommand{\D}{\vert D^\gamma\vert}
\newcommand{\K}{\mathcal{K}}
\newcommand{\R}{\mathbb{R}}
\newcommand{\B}{\mathfrak{P}}
\newcommand{\rt}{\underline{\mathfrak{a} }}
\newcommand{\Km}{\mathcal{K}(\epsilon\sqrt{\mu}\nablag\zeta)}
\newcommand{\Kk}{\mathcal{K}_{(k)}(\epsilon\sqrt{\mu}\nablag\zeta)}
\newcommand{\Uk}{U_{(k)}}
\newcommand{\Sy}{\mathcal{S}^1[U]}
\newcommand{\sy}{\mathcal{S}^2[U]}
\def \epsilon {\varepsilon}
\begin{document}
\title{The Cauchy problem on large time for the Water Waves equations with large topography variations}
\author{Mésognon-Gireau Benoît\footnote{UMR 8553 CNRS, Laboratoire de Mathématiques et Applications de l'Ecole Normale Supérieure, 75005 Paris, France. Email: benoit.mesognon-gireau@ens.fr}}
\date{}
\maketitle

\begin{abstract}
This paper shows that the long time existence of solutions to the Water Waves equations remains true for large amplitude topography variations in presence of surface tension. More precisely, the dimensionless equations depend strongly on three parameters $\epsilon,\mu,\beta$ measuring the amplitude of the waves, the shallowness and the amplitude of the bathymetric variations respectively. In \cite{alvarez}, the local existence of solutions to this problem is proved on a time interval of size $\frac{1}{\max (\beta,\epsilon)}$ and uniformly with respect to $\mu$. In presence of large bathymetric variations (typically $\beta \gg \epsilon$), the existence time is therefore considerably reduced. We remove here this restriction and prove the local existence on a time interval of size $\frac{1}{\epsilon}$ under the constraint that the surface tension parameter must be at the same order as the shallowness parameter $\mu$. We also show that the result of \cite{bresch_metivier} dealing with large bathymetric variations for the Shallow Water equations can be viewed as a particular endpoint case of our result.\end{abstract}

\section{Introduction}
We recall here classical formulations of the Water Waves problem, with and without surface tension. We then shortly introduce the meaningful dimensionless parameters of this problem, and then state the local existence result proved by \cite{alvarez}. We discuss the dependance of the size of the time interval with respect to these parameters and then explain the strategy adopted in this paper to get an improved local existence result. 
\subsection{Formulations of the Water Waves problem}
The Water Waves problem puts the motion of a fluid with a free surface into equations. We recall here two equivalent formulations of the Water Waves equations for an incompressible and irrotationnal fluid. We then introduce the surface tension, and recall a local existence result by \cite{alvarez}.
\subsubsection{Free surface $d$-dimensional Euler equations} 
The motion, for an incompressible, inviscid and irrotationnal fluid occupying a domain $\Omega_t$ delimited below by a fixed bottom and above by a free surface is described by the following quantities : \\\\
- the velocity of the fluid $U=(V,w)$, where $V$ and $w$ are respectively the horizontal and vertical components \\
- the free top surface profile $\zeta$ \\
- the pressure $P.$ \\
All these functions depends on the time and space variables $t$ and $(X,z) \in\Omega_t$. There exists a function $b:\mathbb{R}^d\rightarrow \mathbb{R}$ such that the domain of the fluid at the time $t$ is given by  $$\Omega_t = \lbrace (X,z)\in\mathbb{R}^{d+1},-H_0+ b(X) < z <\zeta(t,X)\rbrace,$$ where $H_0$ is the typical depth of the water. The unknowns $(U,\zeta,P)$ are governed by the Euler equations: 
\begin{align}
\begin{cases}
\partial_t V +  (V\cdot\nabla + w\partial_z)V = - \nabla P \text{ in } \Omega_t\\
\mbox{\rm div}(U) = 0 \text{ in } \Omega_t\\
\mbox{\rm curl}(U) = 0 \text{ in } \Omega_t .
\label{euler}
\end{cases}\end{align}

In these equations, $\underline{V}$ and $\underline{w}$ are the horizontal and vertical components the velocity evaluated at the surface. We denote here $-ge_z$ the acceleration of gravity, where $e_z$ is the unit vector in the vertical direction. The notation $\nabla$ denotes here the gradient with respect to the horizontal variable $X$. \\

These equations are completed by boundary conditions : 

\begin{align}
\begin{cases}
\partial_t \zeta +\underline{V}\cdot\nabla\zeta - \underline{w} = 0  \\
U\cdot n = 0 \text{ on } \lbrace z=-H_0+ b(X)\rbrace \\
P = P_{atm}\text{ on }  \lbrace z=\zeta(X)\rbrace. \label{boundary_conditions}
\end{cases}
\end{align}
The vector $n$ in the last equation stands for the normal upward vector at the bottom $(X,z=-H_0+b(X))$. We denote $P_{atm}$ the constant pressure of the atmosphere at the surface of the fluid. The first equation states the assumption that the fluid particles does not cross the surface, while the last equation states the assumption that they do not cross the bottom. The equations \eqref{euler} with boundary conditions \eqref{boundary_conditions} are commonly referred to the free surface Euler equations.
\subsubsection{Craig-Sulem-Zakharov formulation} 
Since the fluid is by hypothesis irrotational, it derives from a scalar potential: $$U = \nabla_{X,z} \Phi.$$ Here, $\nabla_{X,z}$ denotes the three dimensional gradient with respect to both variables $X$ and $z$. Zakharov remarked in \cite{zakharov} that the free surface profile $\zeta$ and the potential at the surface $\psi = \Phi_{\vert z=\zeta}$ fully determine the motion of the fluid, and gave an Hamiltonian formulation of the problem. Later, Craig-Sulem, and Sulem (\cite{craigsulem1} and \cite{craigsulem2}) gave a formulation of the Water Waves equation involving the Dirichlet-Neumann operator. The following Hamiltonian system is equivalent (see \cite{david} and \cite{alazard} for more details) to the free surface Euler equations \eqref{euler} and \eqref{boundary_conditions}:
\begin{align*}\begin{cases}
        \displaystyle{\partial_t \zeta -  G\psi = 0} \\
        \displaystyle{\partial_t \psi + g\zeta + \vert\nabla\psi\vert^2 - \frac{(G\psi +\nabla\zeta\cdot\nabla\psi)^2}{2(1+\mid\nabla\zeta\mid^2)}=0 } 
\end{cases}\end{align*}
where the unknown are $\zeta$ (free top profile) and $\psi$ (velocity potential at the surface) with $t$ as time variable and  $X\in\mathbb{R}^d$ as space variable. The fixed bottom profile is $b$, and $G$ stands for the Dirichlet-Neumann operator, that is 

\begin{equation*}
G\psi = G[\zeta, b]\psi = \sqrt{1+ \modd{\nabla\zeta}} \partial_n \Phi_{\vert z=\zeta},
\end{equation*} 
where $\Phi$ stands for the potential, and solves Laplace equation with Neumann (at the bottom) and Dirichlet (at the surface) boundary conditions 

\begin{align}\begin{cases}
\Delta_{X,z} \Phi = 0 \quad \text{in } \mathbb{R}^d \times \lbrace -H_0+ b < z < \zeta\rbrace \\
\phi_{\vert z=\zeta} = \psi,\quad \partial_n \Phi_{\vert z=-H_0+ b} = 0 \label{dirichlet}
\end{cases}\end{align}
with the notation, for the normal derivative $$\partial_n \Phi_{\vert z=-H_0+b(X)} = \nabla_{X,z}\Phi(X,-H_0+b(X))\cdot n$$ where $n$ stands for the normal upward vector at the bottom $(X,-H_0+b(X))$. See also \cite{david} for more details.
\\

\subsubsection{Water Waves with surface tension}
In the presence of surface tension, the only difference with the classical Euler problem \eqref{euler} is that the pressure condition at the surface is changed into $$P-P_{atm} = \sigma \kappa(\zeta) \text{ on } \lbrace z=\zeta(t,X)\rbrace,$$ where $\sigma$ denotes the surface tension coefficient, and $\kappa(\zeta)$ is the mean curvature at the surface $$\kappa(\zeta) = -\nabla\cdot(\frac{\nabla\zeta}{\sqrt{1+\vert\nabla\zeta\vert^2}}).$$ The Zakharov/Craig-Sulem formulation has therefore to be changed into 

\begin{align}\begin{cases}
        \displaystyle{\partial_t \zeta -  G\psi = 0} \\
        \displaystyle{\partial_t \psi + g\zeta + \vert\nabla\psi\vert^2 - \frac{(G\psi +\nabla\zeta\cdot\nabla\psi)^2}{2(1+\mid\nabla\zeta\mid^2)}=-\frac{\sigma}{\rho}\kappa(\zeta). } \label{ww_equation}
\end{cases}\end{align}

\subsubsection{Dimensionless equations}
Since the properties of the solutions depend strongly on the characteristics of the fluid, it is more convenient to non-dimensionalize the equations by introducing some characteristic lengths of the wave motion: \\\\
(1) The characteristic water depth $H_0$ \\
(2) The characteristic horizontal scale $L_x$ in the longitudinal direction \\
(3) The characteristic horizontal scale $L_y$ in the transverse direction (when $d=2$)\\
(4) The order of the free surface amplitude $a_{surf}$\\
(5) The order of bottom topography $a_{bott}$.\\

Let us then introduce the dimensionless variables: $$x'=\frac{x}{L_x},\quad y'=\frac{y}{L_y},\quad \zeta'=\frac{\zeta}{a_{surf}},\quad z'=\frac{z}{H_0},\quad b'=\frac{b}{a_{bott}},$$ and the dimensionless variables: $$t'=\frac{t}{t_0},\quad \Phi'=\frac{\Phi}{\Phi_0},$$ where $$t_0 = \frac{L_x}{\sqrt{gH_0 }},\quad \Phi_0 = \frac{a_{surf}}{H_0}L_x \sqrt{gH_0}.$$ 

After re scaling, four dimensionless parameters appear in the equation. They are 

\begin{align*}
\frac{a_{surf}}{H_0} = \epsilon, \quad \frac{H_0^2}{L_x^2} = \mu,\quad \frac{a_{bott}}{H_0} = \beta,\quad \frac{L_x}{L_y} = \gamma,\quad B_0=\frac{\rho gL_x^2}{\sigma},
\end{align*}
where  $\epsilon,\mu,\beta,\gamma$ are commonly referred  respectively as "nonlinearity", "shallowness", "topography", "transversality" and "Bond" parameters.\\

For instance, the Zakharov-Craig-Sulem system (\ref{ww_equation}) becomes (see \cite{david} for more details) in dimensionless variables (we omit the "primes" for the sake of clarity): 

\begin{align}\begin{cases}
        \displaystyle{\partial_t \zeta - \frac{1}{\mu} G_{\mu,\gamma}[\epsilon\zeta,\beta b]\psi = 0 }\\
        \ds \partial_t \psi + \zeta + \frac{\epsilon}{2}\vert\nablag\psi\vert^2 - \frac{\epsilon}{\mu}\frac{(G_{\mu,\gamma}[\epsilon\zeta,\beta b]\psi +\epsilon\mu\nablag\zeta\cdot\nablag\psi)^2}{2(1+\epsilon^2\mu\mid\nablag\zeta\mid^2)}=-\frac{1}{B_0}\frac{\kappa_\gamma(\epsilon\sqrt{\mu}\zeta)}{\epsilon\sqrt{\mu}},  \label{ww_equation1}
\end{cases}\end{align} where $G_{\mu,\gamma}[\epsilon\zeta,\beta b]\psi$ stands for the dimensionless  Dirichlet-Neumann operator, 

\begin{equation*}
G_{\mu,\gamma}[\epsilon\zeta,\beta b]\psi = \sqrt{1+\epsilon^2 \modd{\nablag\zeta}} \partial_n \Phi_{\vert z=\epsilon\zeta} =  (\partial_z\Phi-\mu\nabla^{\gamma}(\epsilon \zeta)\cdot\nabla^{\gamma}\Phi)_{\vert z=\epsilon\zeta},
\end{equation*} 
where $\Phi$ solves the Laplace equation with Neumann (at the bottom) and Dirichlet (at the surface) boundary conditions 

\begin{align*}\begin{cases}
\Delta^{\mu,\gamma} \Phi = 0 \quad \text{in } \mathbb{R}^d \times \lbrace -1+\beta b < z < \epsilon\zeta\rbrace \\
\phi_{\vert z=\epsilon\zeta} = \psi,\quad \partial_n \Phi_{\vert z=-1+\beta b} = 0,
\end{cases}\end{align*}
\\
and where the surface tension term is $$\kappa_\gamma(\zeta) = -\nablag\cdot(\frac{\nablag\zeta}{\sqrt{1+\vert\nablag\zeta\vert^2}}).$$
We used the following notations:

\begin{align*}
&\nabla^{\gamma} = {}^t(\partial_x,\gamma\partial_y) \quad &\text{ if } d=2\quad &\text{ and } &\nabla^{\gamma} = \partial_x &\quad \text{ if } d=1 \\
&\Delta^{\mu,\gamma} = \mu\partial_x^2+\gamma^2\mu\partial_y^2+\partial_z^2 \quad &\text{ if } d=2\quad &\text{ and } &\Delta^{\mu,\gamma} = \mu\partial_x^2+\partial_z^2 &\quad \text{ if } d=1
\end{align*}
and  $$ \partial_n \Phi_{\vert z=-1+\beta b}=\frac{1}{\sqrt{1+\beta^2\vert\nablag b\vert^2}} (\partial_z\Phi-\mu\nabla^{\gamma}(\beta b)\cdot\nabla^{\gamma}\Phi)_{\vert z=-1+\beta b}.$$

\subsection{Main result}

Alvarez-Lannes (\cite{alvarez}) proved the following local existence result. We use the notation $a\vee b = \max(a,b)$.
\begin{theorem}
Under reasonable assumptions on the initial conditions $(\zeta^0,\psi^0)$, there exists a unique solution $(\zeta,\psi)$ of the Water Waves equations $(\ref{ww_equation1})$ with initial condition $(\zeta^0,\psi^0)$ on a time interval $[0;\frac{T}{\epsilon\vee\beta}]$, where $T$ only depends on initial data.\label{david}
\end{theorem}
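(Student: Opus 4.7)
The plan is to follow the standard quasilinearization plus energy estimate strategy for the Water Waves system, in the variables $(\zeta,\psi)$, with a careful tracking of the parameters $\epsilon,\mu,\beta,\gamma$ so that the constants degenerate neither in $\mu\to0$ nor in $\beta$ (so long as $\beta$ remains bounded). First I would fix the functional framework: work in $\mu$-weighted Sobolev spaces (replacing $\nabla$ by $\sqrt{\mu}\,\nabla^{\gamma}$ to reflect the shallow-water scaling) and on solutions satisfying a uniform non-cavitation condition $\epsilon\zeta-\beta b+1\ge h_{\min}>0$ as well as a Rayleigh--Taylor condition $\underline{\mathfrak a}\ge\mathfrak a_{\min}>0$. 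The basic analytic toolbox needed is (i) the elliptic theory for the Laplace--Neumann problem on the strip $\{-1+\beta b<z<\epsilon\zeta\}$, yielding $H^s$ estimates on $\Phi$ and on $G_{\mu,\gamma}[\epsilon\zeta,\beta b]\psi$ uniform in $\mu$; (ii) the shape derivative formulas $d\zeta G\cdot h$ and $d_b G\cdot h$ that express how $G_{\mu,\gamma}$ depends on the moving surface and on the topography; and (iii) the corresponding commutator estimates $[\partial^\alpha,G_{\mu,\gamma}]$, where each occurrence of $\partial\zeta$ carries an $\epsilon$ and each occurrence of $\partial b$ carries a $\beta$.

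Second, I would quasilinearize the system by differentiating the two equations of $(\ref{ww_equation1})$ by $\partial^\alpha$ for all $|\alpha|\le N$, $N$ sufficiently large. To avoid the apparent loss of one derivative in $[\partial^\alpha,G_{\mu,\gamma}]\psi$ and in the nonlinear term of the $\psi$-equation, I would introduce the Alinhac good unknown
\begin{equation*}
\psiak=\partial^\alpha\psi-\epsilon\underline{w}\,\partial^\alpha\zeta,\qquad \zetaak=\partial^\alpha\zeta,
\end{equation*}
so that the evolution of $(\zetaak,\psiak)$ takes a symmetrizable quasilinear form whose principal part is essentially the linearization of $(\ref{ww_equation1})$ around $(\zeta,\psi)$, with source terms of lower order multiplied by factors of $\epsilon$ (from $\zeta$-derivatives) or $\beta$ (from $b$-derivatives).

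Third, I would build an energy $E^N(U)$ that is coercive under the Rayleigh--Taylor and non-cavitation conditions and whose time derivative along the flow satisfies
\begin{equation*}
\frac{d}{dt}E^N(U)\le (\epsilon\vee\beta)\,C\bigl(E^N(U),\tfrac{1}{h_{\min}},\tfrac{1}{\mathfrak a_{\min}},|b|_{H^{N+2}}\bigr).
\end{equation*}
The scaling $(\epsilon\vee\beta)$ is exactly what falls out of the commutator bounds: every dangerous commutator term contains at least one factor $\epsilon\partial^{\ge1}\zeta$ or $\beta\partial^{\ge1}b$. Gronwall's inequality then produces a uniform bound on $E^N(U)$ on the interval $[0,T/(\epsilon\vee\beta)]$ with $T$ depending only on the initial data, and the preservation of the non-cavitation and Rayleigh--Taylor conditions is inherited on the same interval from continuity.

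Finally, to turn these a priori estimates into a true existence and uniqueness statement, I would regularize the equations (for instance by Friedrichs mollifiers, or by adding a small artificial viscosity that is removed in the limit), apply the same energy argument to the regularized problem where local-in-time solutions exist by Cauchy--Lipschitz in a suitable Banach space, and then pass to the limit using the uniform estimates and a weak-strong compactness argument; uniqueness is obtained by a standard energy estimate on the difference of two solutions, using the good unknown structure once more. The main obstacle I expect is the third step: extracting the clean factor $(\epsilon\vee\beta)$ in the energy inequality requires very precise tame estimates on $G_{\mu,\gamma}$ and its shape derivatives with respect to both the surface $\epsilon\zeta$ and the bottom $\beta b$, uniformly in $\mu\in(0,1]$; in particular, controlling the principal symbol through the symmetrizer and verifying that the surface-tension correction $\frac{1}{B_0}\kappa_\gamma(\epsilon\sqrt\mu\zeta)$ (if retained in this statement) does not destroy the structure is the delicate point, while every other step is essentially bookkeeping around this core estimate.
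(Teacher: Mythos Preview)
Your proposal is correct and follows essentially the same approach that the paper describes: quasilinearization via Alinhac good unknowns $\psi_{(\alpha)}=\partial^\alpha\psi-\epsilon\underline{w}\,\partial^\alpha\zeta$, an energy built from the symmetrizer of the linearized operator, and the observation that every commutator carries a factor $\epsilon$ (from $\partial\zeta$) or $\beta$ (from $\partial b$), yielding $\frac{d}{dt}E^N\le(\epsilon\vee\beta)C(E^N)$ and hence existence on $[0,T/(\epsilon\vee\beta)]$. Note, however, that the paper does not actually prove Theorem~\ref{david}: it is quoted as a known result of Alvarez--Lannes (see the sentence ``For a precise statement, see section~\ref{statement} and see \cite{david} Theorem~9.6 and Chapter~4 for a complete proof''), and the methodology is only sketched in \S\ref{energysection} as background for the paper's main contribution, which is precisely to \emph{remove} the $\beta$ from the denominator.
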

For a precise statement, see section \ref{statement} and see \cite{david} Theorem 9.6 and Chapter 4 for a complete proof. The fact that $T$ does not depend on $\mu$ allowed the authors to provide a rigorous justification of most of the Shallow Water models used in the literature for the description, among others, of coastal flows. In these models, one has $\beta = O(\epsilon)$ and therefore the time scale for the solution is $O(\frac{1}{\epsilon\vee\beta})=O(\frac{1}{\epsilon})$. There exists however some asymptotics models assuming small amplitude surface variation ($\epsilon = O(\mu)$) and large bottom variation $\beta = O(1)$. This is the case of the well-known Boussinesq-Peregrine model (\cite{boussinesq1},\cite{boussinesq2},\cite{peregrine}) that has been used a lot in applications. For such a regime, Theorem \eqref{david} provides an existence time of order $O(1)$ only. Our aim is here to improve this result in order to reach an $O(\frac{1}{\epsilon})$ existence time. We prove the following result.

\begin{theorem}
The Water Waves equation with surface tension \eqref{ww_equation3} admits a solution on a time interval of the form $[0;\frac{1}{\epsilon}]$ where $T$ only depends on $B_0\mu$ and on the initial data. \label{theorem_flou}
\end{theorem}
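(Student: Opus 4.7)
The natural approach is a quasilinearization of \eqref{ww_equation1} in the spirit of \cite{alvarez,david}: differentiate $\alpha$ times in space, rewrite the resulting linear system in terms of Alinhac good unknowns $\zetaa = \partial^\alpha \zeta$ and $\psia = \partial^\alpha \psi - \epsilon \wsurf \partial^\alpha \zeta$ (with a suitable $\wsurf$), construct a symmetrizer, and derive an energy estimate of the form
\[
\frac{d}{dt} E^N(U) \leq \epsilon \, C(E^N(U)),
\]
whose crucial feature must be that the right-hand side contains $\epsilon$ but \emph{not} $\beta$. A Picard iteration as in \cite{alvarez} then yields existence on $[0, T/\epsilon]$ with $T$ depending only on $B_0\mu$ and on the initial data.

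Under the scaling $B_0 \mu = O(1)$, the surface-tension contribution $-\frac{1}{B_0}\frac{\kappa_\gamma(\epsilon\sqrt{\mu}\zeta)}{\epsilon\sqrt{\mu}}$ linearizes at top order to $\frac{1}{B_0}\nablag \cdot \bigl(\frac{\nablag \zetaa}{\sqrt{1+\epsilon^2\mu\modd{\nablag\zeta}}}\bigr)$, and integration by parts produces in the symmetrized energy a positive quadratic term of size $\frac{1}{B_0}\norme{\nablag \zetaa}_{L^2}^2$, in addition to the usual $\rt \norme{\zetaa}_{L^2}^2 + \frac{1}{\mu}(\zetaa,\Km\psia)$. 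Since $\frac{1}{B_0}\sim \mu$, this new contribution provides $\mu$-weighted control of one extra horizontal derivative of $\zeta$ that is not present in the energy used in \cite{alvarez}. This is the gain that I expect to compensate the $\beta$-dependent losses coming from the topography.

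The central technical step is the commutator estimate for $[\partial^\alpha, \frac{1}{\mu}G_{\mu,\gamma}[\epsilon\zeta,\beta b]]\psi$, together with analogous commutators for $\kappa_\gamma$ and for the Rayleigh--Taylor coefficient $\rt$. The problematic terms come in two shapes. Terms schematically of the form $\beta (\nablag b)\cdot \nablag \partial^\alpha \psi$ are absorbed directly into the kinetic part of $E^N$, since the top derivative falls on $\psi$. Terms of the form $\beta\, \nablag \partial^\alpha \zeta \times (\text{lower order})$, which a priori cost a full factor $\beta$, can be estimated by Cauchy--Schwarz against $\sqrt{\mu}\,\norme{\nablag \zetaa}_{L^2}$: the factor $\sqrt{\mu}$ is absorbed by the surface-tension energy, while the remaining $\beta/\sqrt{\mu}$ is bounded by $C(B_0\mu)$. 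Thus each nominally $\beta$-scaled contribution is converted into one controlled by $C(B_0\mu)\,\epsilon\, E^N$, as required.

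The main obstacle I anticipate is producing the underlying elliptic estimates for the velocity potential $\Phi$ solving \eqref{dirichlet} --- and hence for $G_{\mu,\gamma}[\epsilon\zeta,\beta b]$ and its higher derivatives --- with constants that are \emph{uniform in $\beta$ at the top order}. Naively differentiating through a flattening of $\Omega_t$ produces prefactors of the form $(1 + \beta \norme{b}_{H^N})^N$ which, for $\beta = O(1)$, would prevent closing the bootstrap on a time $1/\epsilon$. The resolution must be a careful integration-by-parts analysis in the flattened domain ensuring that every top-order occurrence of $\beta$ is paired either with a derivative of $\psi$ (absorbed by the kinetic energy) or with a top-order derivative of $\zeta$ that the surface-tension part of $E^N$ controls via the $\sqrt{\mu}$-trade above. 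Once these uniform estimates are established, standard Gronwall and the iteration scheme of \cite{alvarez,david} conclude the proof; letting $\mu \to 0$ with $B_0\mu$ fixed should then formally recover the Bresch--M\'etivier result \cite{bresch_metivier} as an endpoint, consistent with the claim in the abstract.
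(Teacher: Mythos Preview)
Your approach has a genuine gap, and it is precisely the one the paper identifies and circumvents. You propose to differentiate $\partial^\alpha$ in \emph{space}, form Alinhac unknowns, and absorb the $\beta$-dependent commutator terms using the extra $\frac{1}{B_0}\vert\nablag\zetaa\vert_2^2$ provided by surface tension. But after the time rescaling $t'=\epsilon t$, the commutator $[\partial^\alpha,\frac{1}{\mu}G[\epsilon\zeta,\beta b]]\psi$ produces shape derivatives of $G$ with respect to the bottom that carry a singular prefactor $\frac{\beta}{\epsilon}$ (the bottom is time-independent, so no $\epsilon$ accompanies $\partial^\alpha b$). These terms sit in the off-diagonal of the principal operator $\tilde{\mathcal A}_\sigma[U]$, destroying the antisymmetry that makes the $\frac{1}{\epsilon}$-terms cancel in the energy identity. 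Your proposed cure---trading $\beta$ for $\sqrt{\mu}$ via the surface-tension energy and declaring $\beta/\sqrt{\mu}\le C(B_0\mu)$---is arithmetically wrong: $\beta$ is an independent parameter of size $O(1)$, while $\mu$ may be arbitrarily small with $B_0\mu$ fixed, so $\beta/\sqrt{\mu}$ is unbounded. The same objection applies to your claim that terms $\beta(\nablag b)\cdot\nablag\partial^\alpha\psi$ are ``absorbed into the kinetic part'': they pair with $\psia$ to give contributions of size $\beta$, not $\epsilon$, so you cannot close $\frac{d}{dt}E^N\le \epsilon\,C(E^N)$.

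The paper's argument is structurally different and follows the Bresch--M\'etivier/M\'etivier--Schochet scheme. One works in the rescaled time, differentiates only with $(\epsilon\partial_t)^k$, and observes that since $b$ is time-independent the resulting quasilinear system \emph{is} symmetrizable in its $\frac{1}{\epsilon}$ part; this yields uniform bounds $\vert\zetak\vert_{H^1_\sigma}+\vert\B\psik\vert_2\le C_0+C_1(K)t$ for all $k\le N$. Space regularity is then recovered by induction on $\vert\alpha\vert$: the equation gives $\frac{1}{\mu}G\psiak$ in terms of $(\epsilon\partial_t)\zetaak$ plus controlled remainders, and here the role of surface tension is not to absorb $\beta$-losses but to supply enough ellipticity---the $H^1_\sigma$ control of $\zeta_{(\alpha-e_j,k+1)}$ together with the bound $\frac{(1+\sqrt{\mu}\D)^{1/2}}{1+\frac{1}{\sqrt{B_0}}\D}\le C(B_0\mu)$---to gain a full space derivative of $\B\psi$ at each step. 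The dependence on $B_0\mu$ enters only through this recovery inequality, which is why the Shallow Water endpoint needs no surface tension.
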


In order to prove Theorem \eqref{theorem_flou}, we use a method inspired by Bresch-Métivier \cite{bresch_metivier} and Métivier-Schochet \cite{schochetmetivier}. The only condition we need is that there is a small amount of surface tension. More precisely, we assume that the capillary parameter $\frac{1}{B_0}$ is at the same order as the shallowness parameter $\mu$.
\\

In the context of the method used by \cite{bresch_metivier}, we start by rescaling the time by setting $t'=t\epsilon$. The Theorem \ref{david} now gives an existence time of order $\epsilon$ in these new scaled variables. The Craig-Sulem-Zakharov formulation of the Water Waves problem in the newly scaled variables is 
\begin{align}\begin{cases}
       \displaystyle{ \partial_t \zeta - \frac{1}{\mu\epsilon} G\psi = 0} \\
        \ds\partial_t \psi + \frac{1}{\epsilon}\zeta + \frac{1}{2}\mid\nablag\psi\mid^2 - \frac{1}{\mu}\frac{(G\psi +\epsilon\mu\nablag\zeta\cdot\nablag\psi)^2}{2(1+\epsilon^2\mu\mid\nablag\zeta\mid^2)}=-\frac{1}{B_0\epsilon}\frac{\kappa_\gamma(\epsilon\sqrt{\mu}\zeta)}{\epsilon\sqrt{\mu}}. \label{ww_equation3}
\end{cases}\end{align}
The difficulty is therefore to handle the singular $O(\frac{1}{\epsilon})$ terms of this equation. For the sake of clarity, let us sketch the method of \cite{bresch_metivier}, \cite{schochetmetivier} on the example of the Shallow-Water equations, implemented in \cite{bresch_metivier}. The Shallow-Water equations can be read in the present setting variables  

\begin{align}
\begin{cases}
\ds \partial_t\zeta +\frac{1}{\epsilon}\nablag\cdot (h\Vu) = 0 \\
\ds \partial_t\Vu+(\Vu\cdot\nablag)\Vu + \frac{1}{\epsilon}\nablag\zeta = 0.\label{shallowater1}
\end{cases}
\end{align}
We denoted $h$ the total height of water: $$h(t,X)=1+\epsilon\zeta(t,X)-\beta b(X)$$ and $$\Vu(t,X) = \frac{1}{h(t,X)}\int_{-1+\beta b(X)}^{\epsilon\zeta(t,X)} V(t,X,z)dz$$ the vertical mean of $V$, the horizontal component of the velocity. The natural energy associated to this equation is $$E(\zeta,\Vu) = \frac{1}{2}\vert\zeta\vert_2^2+\frac{1}{2}(h\Vu,\Vu)_2.$$ By derivating in time it is easy to check that $E\big((\epsilon\partial_t)^k\zeta, (\epsilon\partial_t)^k\Vu\big)$ is uniformly bounded with respect to $\epsilon$. It is however not the case for $H^s$ norms of these unknowns with $s\geq 1$, because of commutators terms with the bottom parametrization that are of order $\frac{\beta}{\epsilon}$ and therefore singular if $\beta =O(1)$. Now, to recover an energy uniformly bounded with respect to $\epsilon$ for the spatial derivatives, we use the equation \eqref{shallowater1} to write $$\nablag\zeta = \epsilon\partial_t\Vu +\epsilon R,$$ where $\vert R\vert_2\leq  C$ with $C$ independent of $\epsilon$. Thus $\nablag\zeta$ is bounded in $L^2$ norm uniformly with respect to $\epsilon$. It allows us to recover a control of $\vert\zeta\vert_{H^1}$.  For $\Vu$, the equation \eqref{shallowater1} gives that $\vert\nablag\cdot(h\Vu)\vert_2$ is bounded by $\vert(\epsilon\partial_t)\zeta\vert_2$ (bounded uniformly), and taking the rotational of the second equation, one has that $\mbox{rot}(\Vu)$ satisfies a symmetric hyperbolic equation of the form $$\partial_t \mbox{\rm curl}\Vu+\Vu\cdot\nablag\mbox{\rm curl}\Vu  = R$$ with $$\vert R\vert_{L^2} \leq C,$$ and with $C$ independent of $\epsilon$, and thus $\mbox{\rm curl}\Vu$ is uniformly bounded in $L^2$ norm. With an ellipticity argument, one recovers an uniform bound for $\Vu$ in $H^1$ norm. By induction, one can recover the same bound for higher order space derivatives. \\

  We propose here an adaptation of this method to the Water Waves problem.  The structure of the equation is important for this method. For example, for the Water Waves equation \eqref{ww_equation3} without surface tension ($\ds \frac{1}{B_0} = 0$), one can differentiate the energy $$E(\zeta,\psi) = \frac{1}{2}(G\psi,\psi)_2+\frac{1}{2}\vert\zeta\vert_2^2$$ with respect to time, and check that \begin{equation}E\big((\epsilon\partial_t)\zeta,(\epsilon\partial_t)\psi\big)\leq C,\label{doublestar}\end{equation} where $C$ does not depend on $\epsilon$. One has, for this energy, the equivalence \begin{equation}E(\zeta,\psi)\sim \frac{1}{2}\vert\B(\epsilon\partial_t)\psi\vert^2_2+\frac{1}{2}\vert(\epsilon\partial_t)\zeta\vert^2_2\label{equivaenergy}\end{equation} where $\B$ is equivalent to the square root of the Dirichlet-Neumann operator, and acts as an order $1/2$ operator (see later Section \ref{statement}): $$\B = \frac{\D}{(1+\sqrt{\mu}\D)^{1/2}}.$$ The problem is then to recover higher order space derivatives estimates. The second equation of \eqref{ww_equation3} gives $$\zeta = \epsilon\partial_t\psi+\epsilon R$$ where $\vert\B R\vert_2\leq C$ with $C$ independent of $\epsilon$. Using \eqref{equivaenergy}, this equation only provides a bound uniform with respect to $\epsilon$ for $\vert\B\zeta\vert_2$, and one recover only $1/2$ space derivative. The same goes for the first equation: $\vert G\psi\vert_2$ is bounded uniformly with respect to $\epsilon$ by $\vert \epsilon\partial_t\zeta\vert_2$, and the ellipticity of $G$ allows us to recover a bound for $\psi$ in $H^1$ norm, and thus we only gained $1/2$ derivative with respect to the control provided by \eqref{doublestar}. We can overcome this problem by taking into account the surface tension term. Indeed, the energy for the equation \eqref{ww_equation3} is $$E(\zeta,\psi)=\vert\zeta\vert_2^2+\frac{1}{B_0}\vert \nablag \zeta\vert_2^2+\vert\B\psi\vert_2^2.$$ As explained above, it is easy to get a uniform bound of the energy for times derivatives. The first equation yields $$\frac{1}{\mu}G\psi = (\epsilon\partial_t)\zeta$$ and thus $\frac{1}{\mu}G\psi$ is bounded uniformly in $L^2$ and $\frac{1}{\sqrt{B_0}} H^1$ norm. One can then prove a uniform bound for $\vert\B\nabla\psi\vert_2$, depending on $B_0\mu$, using the ellipticity of $G$. Note that we pay special attention to the dependence of the surface tension on the existence time; it is indeed important to get a dependence on $Bo\mu$ and not only on $B_0$ since this weaken our assumption on the size of the capillary effects. 

\subsection{Notations}\label{notations}
We introduce here all the notations used in this paper.
 \subsubsection{Operators and quantities} Because of the use of dimensionless variables (see before the "dimensionless equations" paragraph), we use the following twisted partial operators: 
\begin{align*}
&\nabla^{\gamma} = {}^t(\partial_x,\gamma\partial_y) \quad &\text{ if } d=2\quad &\text{ and } &\nabla^{\gamma} = \partial_x &\quad \text{ if } d=1 \\
&\Delta^{\mu,\gamma} = \mu\partial_x^2+\gamma^2\mu\partial_y^2+\partial_z^2 \quad &\text{ if } d=2\quad &\text{ and } &\Delta^{\mu,\gamma} = \mu\partial_x^2+\partial_z^2 &\quad \text{ if } d=1 \\
&\nabla^{\mu,\gamma} = {}^t(\sqrt{\mu}\partial_x,\gamma\sqrt{\mu}\partial_y,\partial_z)\quad &\text{ if } d=2\quad &\text{ and } &{}^t(\sqrt{\mu}\partial_x,\partial_z) &\quad \text{ if } d=1 \\
&\nabla^{\mu,\gamma}\cdot = \sqrt{\mu}\partial_x+\gamma\sqrt{\mu}\partial_y+\partial_z\quad &\text{ if } d=2\quad &\text{ and } &\sqrt{\mu}\partial_x+\partial_z &\quad \text{ if } d=1 \\
&\mbox{\rm curl}^{\mu,\gamma} = {}^t(\sqrt{\mu}\gamma\partial_y-\partial_z,\partial_z-\sqrt{\mu}\partial_x,\partial_x-\gamma\partial_y)&\text {if } d=2\quad
\end{align*}
\begin{remark}All the results proved in this paper do not need the assumption that the typical wave lengths are the same in both direction, ie $\gamma = 1$. However, if one is not interested in the dependance of $\gamma$, it is possible to take $\gamma = 1$ in all the following proofs. A typical situation where $\gamma\neq 1$ is for weakly transverse waves for which $\gamma=\sqrt{\mu}$; this leads to weakly transverse Boussinesq systems and the Kadomtsev–Petviashvili equation (see \cite{lannes_saut}).
\end{remark}
 We use the classical Fourier multiplier 
$$\Lambda^s = (1-\Delta)^{s/2} \text{ on } \mathbb{R}^d$$ defined by its Fourier transform as $$\mathcal{F}(\Lambda^s u)(\xi) = (1+\vert\xi\vert^2)^{s/2}(\mathcal{F}u)(\xi)$$ for all $u\in\mathcal{S}'(\mathbb{R}^d)$.
The operator $\mathfrak{P}$ is defined as 
\begin{equation}
\mathfrak{P} = \frac{\vert D^{\gamma}\vert}{(1+\sqrt{\mu}\vert D^{\gamma}\vert)^{1/2}}\label{defp}
\end{equation}
where $$\mathcal{F}(f(D)u)(\xi) = f(\xi)\mathcal{F}(u)(\xi)$$ is defined for any smooth function $f$ and $u\in\mathcal{S}'(\mathbb{R}^d)$. The pseudo-differential operator $\mathfrak{P}$ acts as the square root of the Dirichlet Neumann operator (see later \ref{equivanorme}).
\\

We denote as before by $G_{\mu,\gamma}$ the Dirichlet-Neumann operator, which is defined as followed in the scaled variables:

\begin{equation*}
G_{\mu,\gamma}\psi = G_{\mu,\gamma}[\epsilon\zeta,\beta b]\psi = \sqrt{1+\epsilon^2 \modd{\nablag\zeta}} \partial_n \Phi_{\vert z=\epsilon\zeta} =  (\partial_z\Phi-\mu\nabla^{\gamma}(\epsilon\zeta)\cdot\nabla^{\gamma}\Phi)_{\vert z=\epsilon\zeta}.
\end{equation*} 
where $\Phi$ solves the Laplace equation 

\begin{align*}
\begin{cases}
\Delta^{\gamma,\mu}\Phi = 0\\
\Phi_{\vert z=\epsilon\zeta} = \psi,\quad \partial_n \Phi_{\vert z=-1+\beta b} = 0
\end{cases}
\end{align*}

For the sake of simplicity, we use the notation $G[\epsilon\zeta,\beta b]\psi$ or even $G\psi$ when no ambiguity is possible. \\

\subsubsection{The Dirichlet-Neumann problem}
In order to study the Dirichlet-Neumann problem \eqref{dirichlet}, we need to map $\Omega_t$ into a fixed domain (and not on a moving subset). For this purpose, we introduce the following fixed strip:
$$\mathcal{S} = \mathbb{R}^d\times (-1;0)$$
and the diffeomorphism 
\begin{displaymath}
\Sigma_t^{\epsilon} :
\left.
  \begin{array}{rcl}
    \mathcal{S} & \rightarrow &\Omega_t \\
    (X,z) & \mapsto & (1+\epsilon\zeta(X)-\beta b(X))z+\epsilon\zeta(X) \\
  \end{array}
\right.
\end{displaymath}
It is quite easy to check that $\Phi$ is the variational solution of \eqref{dirichlet} if and only if $\phi = \Phi\circ\Sigma_t^{\epsilon}$ is the variational solution of the following problem:
\begin{align}\begin{cases}
        \nabla^{\mu,\gamma}\cdot P(\Sigma_t^{\epsilon})\nabla^{\mu,\gamma} \phi = 0  \label{dirichletneumann}\\
        \phi_{z=0}=\psi,\quad \partial_n\phi_{z=-1} = 0,  \end{cases}
\end{align}
and where $$P(\Sigma_t^{\epsilon}) = \vert \det  J_{\Sigma_t^{\epsilon}}\vert J_{\Sigma_t^{\epsilon}}^{-1}~^t(J_{\Sigma_t^{\epsilon}}^{-1}),$$ where $J_{\Sigma_t^{\epsilon}}$ is the jacobian matrix of the diffeomorphism $\Sigma_t^{\epsilon}$.
For a complete statement of the result, and a proof of existence and uniqueness of solutions to these problems, see \cite{david} Chapter 2.\\

We introduce here the notations for the shape derivatives of the Dirichlet-Neumann operator. More precisely, we define the  open set  $\bold{\Gamma}\subset H^{t_0+1}(\mathbb{R}^d)^2$ as:\\
$$\bold{\Gamma} =\lbrace \Gamma=(\zeta,b)\in H^{t_0+1}(\mathbb{R}^d)^2,\quad \exists h_0>0,\forall X\in\mathbb{R}^d, \epsilon\zeta(X) +1-\beta b(X) \geq h_0\rbrace$$ and, given a $\psi\in \overset{.}H{}^{s+1/2}(\mathbb{R}^d)$, the mapping: \begin{displaymath}G[\epsilon\cdot,\beta\cdot] : \left. \begin{array}{rcl}
&\bold{\Gamma} &\longrightarrow H^{s-1/2}(\mathbb{R}^d) \\
&\Gamma=(\zeta,b) &\longmapsto G[\epsilon\zeta,\beta b]\psi.
\end{array}\right.\end{displaymath} We can prove the differentiability of this mapping. See Appendix \ref{appendixA} for more details. We denote $d^jG(h,k)\psi$ the $j$-th derivative of the mapping at $(\zeta,b)$ in the direction $(h,k)$. When we only differentiate in one direction, and no ambiguity is possible, we simply denote $d^jG(h)\psi$ or $d^j G(k)\psi$.

\subsubsection{Functional spaces}
The standard scalar product on $L^2(\mathbb{R}^d)$ is denoted by $(\quad,\quad)_2$ and the associated norm $\vert\cdot\vert_2$. We will denote the norm of the Sobolev spaces $H^s(\mathbb{R}^d)$ by $\vert \cdot\vert_{H^s}$.\\
\\We introduce the following functional Sobolev-type spaces, or Beppo-Levi spaces: 
\begin{definition}
We denote $\dot{H}^{s+1}(\mathbb{R}^d)$ the topological vector space 
$$\dot{H}^{s+1}(\mathbb{R}^d) = \lbrace u\in L^2_{loc}(\mathbb{R}^d),\quad \nabla u\in H^s(\mathbb{R}^d)\rbrace$$
endowed with the (semi) norm $\vert u\vert_{\dot{H}^{s+1}(\mathbb{R}^d)} = \vert\nabla u\vert_{H^s(\mathbb{R}^d)} $.
\end{definition}
Just remark that $\dot{H}^{s+1}(\mathbb{R}^d)/\mathbb{R}^d$ is a Banach space (see for instance \cite{lions}).
\\

The space variables $z\in\mathbb{R}$ and $X\in\mathbb{R}^d$ play different roles in the equations since the Euler formulation (\ref{euler}) is posed for $(X,z)\in \Omega_t$. Therefore, $X$ lives in the whole space $\mathbb{R}^d$ (which allows to take fractionary Sobolev type norms in space), while $z$ is actually bounded.  For this reason, we need to introduce the following Banach spaces: 
\begin{definition}
The Banach space $(H^{s,k}((-1,0) \times\mathbb{R}^d),\vert .\vert_{H^{s,k}})$ is defined by 
$$H^{s,k}((-1,0) \times\mathbb{R}^d) = \bigcap_{j=0}^{k} H^j((-1,0);H^{s-j}(\mathbb{R}^d)),\quad \vert u\vert_{H^{s,k}} = \sum_{j=0}^k \vert \Lambda^{s-j}\partial_z^j u\vert_2.$$
\end{definition}

\section{Main result}
This section is dedicated to the proof of Theorem \ref{theorem_flou}. In \S \ref{energysection} we introduce the energy space $\mathcal{E}^N_\sigma$ used in the Water Waves equations. This energy plays an important role in the proof of the main result, since the key point consists in proving that this energy is uniformly bounded with respect to $\epsilon$. We also recall in this Subsection the method used to prove the local existence theorem  for the Water Waves equation. The proof of the local existence relies on the important assumption that the Rayleigh-Taylor condition holds; this is discussed in \S \ref{rayleigh_section}. The following \S \ref{statement} states the main result of this paper, that is, the precise statement of Theorem \ref{theorem_flou}. The last \S \ref{proof_section} is dedicated to the proof of this result.
\subsection{The energy space}\label{energysection}
The purpose of this section is to introduce the energy space used in the proof of the local existence result for the Water Waves equation. To this purpose, we explain the strategy of this proof. We adapt here the approach of \cite{david} to the rescaled in time equations \eqref{ww_equation2}, pointing out where the singular terms are. We recall that we rescale the time variable for the equation \eqref{ww_equation1} by setting $$t'=t\epsilon.$$ The Water Waves equation with surface tension \eqref{ww_equation1} in the newly scaled variables is  
\begin{align}\begin{cases}
       \displaystyle{ \partial_t \zeta - \frac{1}{\mu\epsilon} G\psi = 0} \\
        \ds\partial_t \psi + \frac{1}{\epsilon}\zeta + \frac{1}{2}\mid\nablag\psi\mid^2 - \frac{1}{\mu}\frac{(G\psi +\epsilon\mu\nablag\zeta\cdot\nablag\psi)^2}{2(1+\epsilon^2\mu\mid\nablag\zeta\mid^2)}=-\frac{1}{B_0\epsilon}\frac{\kappa_\gamma(\epsilon\sqrt{\mu}\zeta)}{\epsilon\sqrt{\mu}}. \label{ww_equation2}
\end{cases}\end{align}
\begin{remark} We recall that $$\kappa_\gamma(\zeta) = -\nablag\cdot(\frac{\nablag\zeta}{\sqrt{1+\vert\nablag\zeta\vert^2}}),$$ so the surface tension term that appears on the right hand side of the second equation of \eqref{ww_equation2} is only of size $\ds \frac{1}{\epsilon}$ (thought at first sight it seems of size $\ds \frac{1}{\epsilon^2}$).\end{remark} 

The purpose of the proof of an existence time uniform with respect to $\epsilon$ for this equation in the newly time scaled variables, is to get an  uniform bound with respect to $\epsilon$ for a good quantity called the energy which controls Sobolev norms of the unknowns. By a continuity argument, one can deduce a time existence independent of $\epsilon$. For the Water Waves equation with surface tension \eqref{ww_equation2}, a natural quantity appears to act as an "energy". If we look at the linearized equation around the rest state $\zeta=0,\psi=0$, we find a system of evolution equations $$\partial_t U+\frac{1}{\epsilon}\mathcal{A}_\sigma U = 0,\quad \text{ with }\quad \mathcal{A}_\sigma=\begin{pmatrix}
&0 &-\frac{1}{\mu}G[0,\beta b] \\
&1-\frac{1}{B_0}\Delta^\gamma &0\end{pmatrix}.$$ This system can be made symmetric if we multiply it by the symmetrizer 

$$\begin{pmatrix}
&1-\frac{1}{B_0}\Delta^\gamma & 0\\
&0 & \frac{1}{\mu}G[0,\beta b]
\end{pmatrix},$$
where $U={}^t(\zeta,\psi)$. In \cite{alvarez},\cite{david}, $G[0,\beta b]$ is replaced by $G[0,0]$ in $\mathcal{A}_\sigma$. Here, we cannot perform this simplification because the error would be of size $O(\frac{\beta}{\epsilon})$ and therefore singular since $\beta = O(1)$. This suggests a natural energy of the form $$\vert\zeta\vert_2^2+\frac{1}{B_0}\vert\nablag\zeta\vert_2^2+(\frac{1}{\mu}G[0,\beta b]\psi,\psi)_2.$$ The last term is uniformly equivalent to $\vert\B\psi\vert_2^2$ where $\B$ is defined in \eqref{defp}. See later Remark \ref{energy_size} for a precise statement. Thus, $\B$ acts as the square root of the Dirichlet-Neumann operator and is of order $1/2$.
\\ 

This energy has not the sufficient order of derivatives to have a real control of the unknowns. For instance, the product $\vert\nablag\psi\vert^2$ in the second equation of $\eqref{ww_equation2}$ is not defined if $\psi$ is only $H^{1/2}(\R^d)$. To recover a control of the unknowns at a higher order, the classical scheme for this kind of method, is to differentiate the equation \eqref{ww_equation2} in order to get an evolution equation of the unknowns $\partial_{X_i}^k\zeta,\partial_{X_i}^k\psi$. For this purpose, we need to use an explicit shape derivative formula with respect to the surface for the Dirichlet Neumann operator. It is given in Appendix Theorem \ref{321}:
$$dG(h)\psi = -\epsilon G(h\underline{w})-\epsilon\mu\nablag\cdot(h\Vd)$$ with $$\underline{w} = \frac{G\psi+\epsilon\mu\nablag\zeta\cdot\nablag\psi}{1+\epsilon^2\mu\modd{\nablag\zeta}} \quad\text{ and }\quad \Vd = \nablag\psi-\epsilon\underline{w}\nablag\zeta.$$ See \cite{david} Chapter 3 for a full proof of this formula. By differentiating $N$ times the first equation of \eqref{ww_equation2}, one finds after some computation that, for $\vert(\alpha,k)\vert =N$, $$\partial_t\partial^{(\alpha,k)}\zeta +\nabla^\gamma\cdot (\underline{V}\partial^{(\alpha,k)}\zeta)-(\frac{1}{\mu\epsilon}G(\partial^{(\alpha,k)}\psi-\epsilon\underline{w}\partial^{(\alpha,k)} \zeta)+\frac{1}{\epsilon}\underset{j\in\mathbb{N^*},l_1+...+l_j+\delta = (\alpha,k)}{\sum} d^j G(\partial^{l_1}b,...,\partial^{l_j}b) \partial^{\delta}\psi )) = R,$$ with, for any $d+1$-uplet of integers $(\alpha,k) = (\alpha_1,...,\alpha_{d},k)$, $\vert(\alpha,k)\vert = \sum_{i=1}^{d} \alpha_i+k$, and with the notation $$\forall (\alpha,k)=(\alpha_1,...,\alpha_{d},k)\in\mathbb{N}^{d+1},\quad \partial^{(\alpha,k)} f = \partial_{X_1}^{\alpha_1}...\partial_{X_d}^{\alpha_d}(\epsilon\partial_t)^kf$$ and where, without entering technical details $$\vert R\vert_{H^{N-1}} \leq C$$ where $C$ does not depend on $\epsilon$. This evolution equation can be found by differentiating $N$ times the Dirichlet-Neumann operator $G[\epsilon\zeta,\beta b]\psi$ and thus $R$ contains derivatives of the form $$dG(\partial^{l_1}\zeta,...,\partial^{l_j}\zeta,\partial^{m_1}\beta b,...,\partial^{m_k}\beta b)\partial^\delta\psi$$ where the shape derivatives of $G$ with respect to $\zeta$ have an $\epsilon$ factor that cancel the $\ds \frac{1}{\epsilon}$ singularity (see also the definition of $dG$ in Section \ref{notations}). Finally, the only singular terms comes from the derivatives of the Dirichlet Neumann operator with respect to the bottom. Up to some extra singular source terms coming from the shape derivatives with respect to the bottom, this equation has the same structure as \eqref{ww_equation2}, with $\partial^{(\alpha,k)} \zeta$ and $\partial^{(\alpha,k)}\psi-\epsilon\underline{w}\partial^{(\alpha,k)} \zeta$ playing the role of $\zeta$ and $\psi$ respectively,  These quantities are the so called "Alinhac good unknowns" and will be denoted by:\begin{equation}
\forall \vert(\alpha,k)\vert\geq 1\quad \zeta_{(\alpha,k)} = \partial^{(\alpha,k)} \zeta, \quad \psi_{(\alpha,k)} = \partial^{(\alpha,k)} \psi - \epsilon \wsurf \partial^{(\alpha,k)}\zeta.\quad  \label{defpsia}\end{equation} An evolution equation for the unknown $\psi_{(\alpha,k)}$ in term of the good unknowns can be also obtained  (see later Section \ref{proof_section}). \\

In the surface tension case, the leading order operator is the surface tension term $\kappa_\gamma$. This leads to some technical complications. For instance, in order to control the time derivatives of $\kappa_\gamma$, one has to include the time derivatives of the unknowns in the energy. This method has been used by \cite{rousset}, \cite{david2} to study the Water Waves Problem with surface tension. Time derivatives and space derivatives play a different role in this proof, and we use the notation  $$\forall k\in\mathbb{N},\qquad \zetak = (\epsilon\partial_t)^k \zeta,\quad\text{ and } \psik = (\epsilon\partial_t)^k\psi-\epsilon\underline{w}(\epsilon\partial_t)^k\zeta$$ for time derivatives, and $$\forall\alpha\in\mathbb{N}^d,\forall k\in\mathbb{N},\qquad \zeta_{(\alpha,k)} = (\epsilon\partial_t)^k\partial^\alpha \zeta\quad\text{ and }\quad \psi_{(\alpha,k)} = (\epsilon\partial_t)^k\partial^\alpha \psi-\epsilon\underline{w}(\epsilon\partial_t)^k\partial^\alpha\zeta$$ such that $f_{(\alpha,k)}$ denotes indeed the good unknown defined by \eqref{defpsia} with index the $d+1$-uplet $(\alpha,k)$. See \cite{david} Chapter 9 for more details about how to handle the time derivative in the energy.\\

All these considerations explain why we do not use, for the local existence result of the Water Waves equations, an energy involving terms of the form $\partial_{X_i}^k\psi$ but rather the following energy:

\begin{equation}\mathcal{E}_\sigma^N(U) = \modd{\zeta}_2+\modd{\mathfrak{P}\psi}_{H^{t_0+3/2}}+\underset{(\alpha,k)\in\mathbb{N}^{d+1},1\leq\mid(\alpha,k)\mid\leq N}{\sum} \modd{\zeta_{(\alpha,k)}}_{H^1_\sigma}+\modd{\mathfrak{P} \psi_{(\alpha,k)}}_2\label{energie_theoreme}\end{equation} where \begin{equation}\vert f\vert_{H^1_\sigma}^2 = \vert f\vert_2^2+\frac{1}{B_0}\vert \nablag f\vert_2^2. \label{defh1}\end{equation}

The choice of $N$ is of course purely technical, and made in particular to have the different products of functions well-defined in the Sobolev Spaces used. Again, it is very important to note that the time derivatives of order less than $N$ appear in the equation.
\\

We consider solutions $U=(\zeta,\psi)$ of the Water Waves equations in the following space:

\begin{equation*}
E_{\sigma,T}^N = \lbrace U\in C(\left[ 0,T\right];H^{t_0+2}\times\overset{.}H{}^2(\mathbb{R}^d)), \mathcal{E}_\sigma^N(U(.))\in L^{\infty}(\left[ 0,T\right])\rbrace
\end{equation*}

\subsection{The Rayleigh-Taylor condition}\label{rayleigh_section}
We explained in Subsection \ref{energysection} that the Water Waves equations \eqref{ww_equation2} can be "quasilinearized". In these quasilinearized equations, a quantity appears to play an important role. It is called the "Rayleigh-Taylor coefficient" (see \cite{david} Chapter 4 and also \cite{taylor} for more details) and is defined by\begin{equation}\underline{\mathfrak{a}}(\zeta,\psi) = 1+\epsilon(\epsilon\partial_t+\epsilon \underline{V}\cdot\nabla^{\gamma})\underline{w} = -\epsilon\frac{P_0}{\rho a g}(\partial_z P)_{\vert z = \epsilon\zeta}\label{rtdef}\end{equation} where  $\underline{w} = (\partial_z\Phi)_{\vert z=\epsilon\zeta}$ and $\Vd = (\nablag\Phi)_{\vert z=\epsilon\zeta}$ are respectively  the horizontal and vertical component of the velocity $U=\nabla_{X,z}\Phi$ evaluated at the surface.  


%

The condition for strict hyperbolicity of the Water Waves system appears to be the following "Rayleigh-Taylor condition": $\underline{\mathfrak{a}}>0$. This makes the link with the classical Rayleigh-Taylor criterion  $$\underset{\mathbb{R}^d}{\inf}(-\partial_z P)_{\vert z=\epsilon\zeta}>0$$ where $P$ is the dimensionless pressure. See \cite{raileigh} for more details. Ebin (\cite{ebin}) showed that the Water Waves problem (without surface tension) is ill-posed if the Rayleigh-Taylor condition is not satisfied. Wu \cite{wu1} proved that this condition is satisfied by any solution of the Water Waves problem in infinite depth. It is proved also in \cite{david} Chapter 4 that this is also true in finite depth for the case of flat bottom. For the surface tension case, the Rayleigh-Taylor condition does not need to be satisfied in order to have well-posedness, but the existence time depends too strongly on the surface tension coefficient and is then too small for most applications to oceanography.

\subsection{Statement of the result}\label{statement}
We now state the main result.

\begin{theorem}\label{uniform_result}
Let $t_0>d/2$,$N\geq t_0+t_0\vee 2+3/2$. Let $U^0 = (\zeta^0,\psi^0)\in E_0^N,b\in H^{N+1\vee t_0+1}(\mathbb{R}^d)$. Let $\epsilon,\gamma,\beta$ be such that $$0\leq \epsilon,\beta,\gamma\leq 1,$$ and moreover assume that: \begin{equation}\exists h_{min}>0,\exists a_0>0,\qquad 1+\epsilon\zeta^0-\beta b\geq h_{min}\quad\text{ and } \quad  \rt(U^0)\geq a_0\label{rayleigh}\end{equation} Then, there exists $T>0$ and a unique solution $U^\epsilon\in E_{\sigma,T}^N$ to \eqref{ww_equation2} with initial data $U^0$. Moreover, $$\frac{1}{T}= C_1,\quad\text{ and }\quad \underset{t\in [0;T]}{\sup} \mathcal{E}_\sigma^N(U^\epsilon(t)) = C_2$$ with $\ds C_i=C(\mathcal{E}_\sigma^N(U^0),\frac{1}{h_{min}},\frac{1}{a_0},\vert b\vert_{H^{N+1\vee t_0+1}},\mu B_0)$ for $i=1,2$.
\end{theorem}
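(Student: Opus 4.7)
The overall strategy is a standard bootstrap: Theorem \ref{david} already furnishes local existence on a (possibly short) interval, so it suffices to establish an a priori estimate
\[
\mathcal{E}_\sigma^N(U^\epsilon(t)) \leq C\bigl(\mathcal{E}_\sigma^N(U^0), h_{min}^{-1}, a_0^{-1}, \vert b\vert_{H^{N+1\vee t_0+1}}, \mu B_0\bigr)
\]
that is uniform in $\epsilon\in(0,1]$ and valid on an interval $[0,T]$ whose length is itself controlled by the same quantities. A continuity argument then produces the claimed lifespan, and uniqueness follows from the same energy scheme applied to the difference of two solutions.

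The a priori estimate would proceed in three movements. First, I would differentiate \eqref{ww_equation2} by applying $\partial^\alpha(\epsilon\partial_t)^k$ for $\vert(\alpha,k)\vert\leq N$, using the shape-derivative formula of Theorem \ref{321}. The Alinhac good unknowns $(\zetaak,\psiak)$ of \eqref{defpsia} should then satisfy a system of the schematic form $\partial_t U_{(\alpha,k)} + \frac{1}{\epsilon}\mathcal{A}_\sigma[\epsilon\zeta,\beta b] U_{(\alpha,k)} = R_{(\alpha,k)}$. Crucially, every $\zeta$-shape-derivative of $G$ carries an explicit $\epsilon$ factor (cf.\ $dG(h)\psi = -\epsilon G(h\wsurf) - \epsilon\mu\nablag\cdot(h\Vd)$), which cancels the singular $1/\epsilon$ prefactor, while derivatives with respect to $\beta b$ carry only a $\beta$ factor and thus survive as potentially $O(\beta/\epsilon)$ source terms in $R_{(\alpha,k)}$.

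Second, I would pair this system with the symmetrizer $\mathrm{diag}\bigl(\rt - \frac{1}{B_0}\Delta^\gamma,\ \frac{1}{\mu}G[\epsilon\zeta,\beta b]\bigr)$, which is positive-definite by the minimum-depth and Rayleigh-Taylor hypotheses \eqref{rayleigh}. The leading $O(1/\epsilon)$ contribution cancels by the symmetry of $\mathcal{A}_\sigma$, yielding $\frac{d}{dt}\mathcal{E}_\sigma^N(U)\leq C\mathcal{E}_\sigma^N(U)$ plus the awkward bottom source terms. These are benign for the purely time-differentiated indices $(\alpha,k)=(0,k)$, because no bottom commutator is generated when only $\epsilon\partial_t$ acts on the coefficients; this yields a uniform control of $\vert\zetak\vert_{H^1_\sigma}^2 + \vert\B\psik\vert_2^2$ for all $k\leq N$.

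The third step, which I expect to be the main obstacle, is to trade this time-derivative control into genuine space-derivative control, following the approach of \cite{bresch_metivier}. From the first equation of \eqref{ww_equation2}, $\frac{1}{\mu}G\psi = \epsilon\partial_t\zeta$ is already bounded in $H^1_\sigma = L^2 \cap \frac{1}{\sqrt{B_0}}H^1$ by step two; elliptic regularity for the Dirichlet-Neumann operator then converts this into a bound on $\vert\B\nablag\psi\vert_2$. The decisive point is that the $H^1$ contribution furnished by surface tension always enters paired with a $\mu$ (since $G$ behaves like $\mu\vert D^\gamma\vert^2$ at low frequency on a flat bottom), producing a dependence on $\mu B_0$ and not on $B_0$ alone. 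Symmetrically, the second equation, rewritten as $\zeta + \frac{1}{B_0\epsilon^2\mu}\kappa_\gamma(\epsilon\sqrt{\mu}\zeta) = -\epsilon\partial_t\psi + \text{(controlled nonlinearities)}$, allows ellipticity of the linearized mean-curvature operator to upgrade the time-derivative bound into a higher Sobolev bound on $\zeta$. Iterating this trade for $\vert\alpha\vert = 1,\dots,N$ closes the estimate, but the bookkeeping—ensuring that every $1/B_0$ factor always travels with a compensating $\mu$—is the genuinely delicate part of the argument.
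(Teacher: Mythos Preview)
Your proposal is correct and matches the paper's approach closely: the paper likewise first runs the symmetrized energy estimate only on the purely time-differentiated unknowns $(\zetak,\psik)$ (where no bottom commutators arise), and then recovers space derivatives by induction, using the equations together with the Fourier-multiplier inequality $\frac{(1+\sqrt{\mu}\vert D^\gamma\vert)^{1/2}}{1+\frac{1}{\sqrt{B_0}}\vert D^\gamma\vert}\leq C(\mu B_0)$, which is exactly your ``every $1/B_0$ travels with a $\mu$'' observation. One technical point your sketch omits is that at the top order $k=N$ the time-differentiated surface-tension term produces a subprincipal second-order operator acting on $\zeta_{(k-1)}$ that the naive energy $E_k$ does not absorb; the paper handles this by adding a small correction $F_k=\epsilon(\mathcal{S}^2[U]U_{(k-1)},U_{(k)})_2$ to the energy so that these subprincipal terms cancel in $\frac{d}{dt}(E_k+F_k)$.
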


It is very important to note that $T$ does not depend on $\epsilon$ for small values of $\epsilon$. The Theorem \ref{theorem_flou} gives an existence time of order $\epsilon$ as $\epsilon$ goes to zero, if $\beta$ is of order $1$. We prove here that it is in fact, of order $1$. Note that the topography parameter $\beta$ is fixed in all this study.\\

 Let us now give a result for the Water Waves equation with surface tension \eqref{ww_equation1} in the initial time variable. The local existence Theorem \ref{david} provides an existence time of order $\frac{1}{\epsilon\vee\beta}$ to the initial Water Waves equation \eqref{ww_equation1} with surface tension. After the rescaling in time $t'=t\epsilon$, the Theorem \ref{david} provides an existence time of order $\frac{\epsilon}{\epsilon\vee\beta}\sim \frac{\epsilon}{\beta}$ as $\epsilon$ goes to $0$. Now, the main result Theorem \ref{uniform_result} claims that the existence time is in fact of order $1$ in this variable. It gives then the following result:

\begin{theorem}
Under the assumptions of Theorem \ref{uniform_result}, there exists a unique solution $(\zeta,\psi)$ of the Water Waves equation \eqref{ww_equation3} with initial condition $(\zeta^0,\psi^0)$ on a time interval $[0;\frac{ T}{\epsilon}]$ where $$\frac{1}{T}= C_1,\quad\text{ and }\quad \underset{t\in [0;T]}{\sup} \mathcal{E}_\sigma^N(U^\epsilon(t)) = C_2$$ with $\ds C_i=C(\mathcal{E}_\sigma^N(U^0),\frac{1}{h_{min}},\frac{1}{a_0},\vert b\vert_{H^{N+1\vee t_0+1}},\mu B_0)$ for $i=1,2$.
\end{theorem}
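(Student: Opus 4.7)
The plan is to deduce the statement as an immediate corollary of Theorem \ref{uniform_result} by undoing the time rescaling $t'=\epsilon t$ that was introduced before its formulation. I proceed in three short steps.

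First, I would apply Theorem \ref{uniform_result} to the initial datum $U^0=(\zeta^0,\psi^0)$. The hypotheses on $U^0$, $b$, and the parameters $\epsilon,\beta,\gamma$---in particular the depth bound $1+\epsilon\zeta^0-\beta b\geq h_{min}$ and the Rayleigh--Taylor condition $\rt(U^0)\geq a_0$---are exactly those of Theorem \ref{uniform_result}, which therefore produces a unique $\widetilde U^\epsilon=(\widetilde\zeta,\widetilde\psi)\in E^N_{\sigma,T}$ solving the rescaled system \eqref{ww_equation2} on an interval $[0,T]$, with $1/T$ and $\sup_{t'\in[0,T]}\mathcal E^N_\sigma(\widetilde U^\epsilon(t'))$ controlled by constants depending only on $\mathcal E^N_\sigma(U^0)$, $1/h_{min}$, $1/a_0$, $|b|_{H^{N+1\vee t_0+1}}$ and $\mu B_0$.

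Second, I would set $U^\epsilon(t,X):=\widetilde U^\epsilon(\epsilon t,X)$. The chain-rule identity $\partial_t=\epsilon\,\partial_{t'}$ cancels all of the $1/\epsilon$ prefactors in \eqref{ww_equation2} and reproduces exactly the original Water Waves system \eqref{ww_equation1} with the same initial data $(\zeta^0,\psi^0)$. Since $\widetilde U^\epsilon$ is defined on $[0,T]$, the pair $U^\epsilon$ is defined on the announced interval $[0,T/\epsilon]$, and uniqueness transfers from Theorem \ref{uniform_result} because $t\mapsto \epsilon t$ is a bijection between the solution sets of \eqref{ww_equation1} on $[0,T/\epsilon]$ and of \eqref{ww_equation2} on $[0,T]$.

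Finally, the energy estimate transfers with no extra work. Iterating the same chain-rule identity yields $(\epsilon\,\partial_{t'})^k\widetilde U^\epsilon(t',X)=\partial_t^k U^\epsilon(t'/\epsilon,X)$, so the good unknowns $\zeta_{(\alpha,k)},\psi_{(\alpha,k)}$ attached to the two equations---with the time-derivative convention of $\mathcal E^N_\sigma$ matched to each one---agree pointwise through the correspondence $t=t'/\epsilon$. Hence $\sup_{t\in[0,T/\epsilon]}\mathcal E^N_\sigma(U^\epsilon(t))=\sup_{t'\in[0,T]}\mathcal E^N_\sigma(\widetilde U^\epsilon(t'))\leq C_2$, which is the announced bound. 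I do not foresee any substantive obstacle here: the purpose of stating Theorem \ref{uniform_result} on the rescaled time scale is precisely so that its $O(1)$-long existence interval in $t'$ pulls back to the $O(1/\epsilon)$ interval for the original water waves system with surface tension.
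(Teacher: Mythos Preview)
Your proof is correct and follows exactly the approach indicated in the paper: the theorem is stated there as an immediate corollary of Theorem \ref{uniform_result}, obtained by undoing the time rescaling $t'=\epsilon t$ (indeed the paper does not give a separate formal proof, only the paragraph preceding the statement). Your third step, checking that $(\epsilon\partial_{t'})^k$ on the rescaled solution coincides with $\partial_t^k$ on the original one so that the energy $\mathcal E^N_\sigma$ is preserved under the change of variables, is a useful detail that the paper leaves implicit.
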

This last result gives a gain of an order $\ds \frac{1}{\epsilon}$ with respect to the time existence provided by Theorem \ref{david}. It is very important to note that the existence time given by Theorem \ref{uniform_result} depends on the constant $B_0\mu$. It implies for instance that in the shallow water limit ($\mu \ll 0$), less on less capillary effects are required (recall that the capillary effects are of order $\frac{1}{B_0}$). This is the reason why in the limit case $\mu = 0$ corresponding to the shallow water equations and investigated in \cite{bresch_metivier}, no surface tension is needed. We discuss about the shallow water regime in the Section \ref{shallow_section}.

\subsection{Proof of Theorem \ref{uniform_result}}\label{proof_section}
The key point of the proof of Theorem \ref{uniform_result} is the following Proposition: 
\begin{proposition} \label{keypoint}
Let $U^{\epsilon}=(\zeta^{\epsilon},\psi^{\epsilon})$ be the unique solution of the equation \eqref{ww_equation2} on the time interval $[0;T^\epsilon]$ and 
$$K^\epsilon = \underset{t\in [0,T^\epsilon]}{\sup} \mathcal{E}_\sigma^N(U^\epsilon(t)).$$ Then we have, with the previous notations: 
\begin{equation}
\forall t\in [0;T^{\epsilon}], \qquad\mathcal{E}_\sigma^N(U^{\epsilon})(t) \leq C_0 + C_1(K^\epsilon) (t+\epsilon)\label{desired}
\end{equation}
where $C_0=C_0(\mathcal{E}_\sigma^N(U^\epsilon_{\vert t=0}))$ and $\ds C_1(K^\epsilon) = C_1(K^\epsilon,\frac{1}{h_{min}},\frac{1}{a_0},\vert b\vert_{H^{N+1\vee t_0+1}},B_0\mu)$ are non decreasing functions of their arguments. \label{main_result}
\end{proposition}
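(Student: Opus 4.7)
The plan is to derive energy estimates for the quasilinear system obtained by applying $(\epsilon\partial_t)^k\partial^\alpha$ to \eqref{ww_equation2} for every $|(\alpha,k)|\leq N$, expressed in terms of the Alinhac good unknowns $(\zeta_{(\alpha,k)},\psi_{(\alpha,k)})$ of \eqref{defpsia}, and then to sum these estimates so as to recover each summand of $\mathcal{E}_\sigma^N$ in \eqref{energie_theoreme}. Throughout, the key feature to exploit is that the principal $O(1/\epsilon)$ operator on the left-hand side is symmetrizable, so that its contribution cancels in the energy identity once the right symmetrizer is used.

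First, I would differentiate the system using the shape derivative formula $dG(h)\psi = -\epsilon G(h\underline{w})-\epsilon\mu\nabla^\gamma\cdot(h\underline{V})$ of Theorem \ref{321} and its higher analogues. Following the computation sketched in \S\ref{energysection}, for each multi-index this produces a system whose principal part coincides with the linearization of the water waves equations around the current solution,
\begin{align*}
\partial_t\zeta_{(\alpha,k)}+\nabla^\gamma\cdot(\underline{V}\zeta_{(\alpha,k)})-\tfrac{1}{\mu\epsilon}G\psi_{(\alpha,k)} &= R^1_{(\alpha,k)},\\
\partial_t\psi_{(\alpha,k)}+\epsilon\underline{V}\cdot\nabla^\gamma\psi_{(\alpha,k)}+\tfrac{1}{\epsilon}\underline{\mathfrak{a}}\,\zeta_{(\alpha,k)}-\tfrac{1}{\epsilon B_0}\mathcal{L}\zeta_{(\alpha,k)} &= R^2_{(\alpha,k)},
\end{align*}
where $\mathcal{L}$ is the second order elliptic operator obtained by linearizing $\kappa_\gamma(\epsilon\sqrt{\mu}\zeta)/(\epsilon\sqrt{\mu})$ at $\zeta$. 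The remainders split into two types: tame commutator contributions, which one bounds in the norms of $\mathcal{E}_\sigma^N$ by $C_1(K^\epsilon)$ via Moser-type product and commutator estimates, and genuinely singular bottom source terms of the form $\frac{1}{\epsilon}d^jG(\beta\partial^{l_1}b,\ldots)\partial^\delta\psi$ that appear whenever the differentiations fall on the bottom parametrization $b$ (unlike the shape derivatives in $\zeta$, these carry no compensating $\epsilon$).

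The core step is the symmetrization argument. Pairing the first equation with $(\underline{\mathfrak{a}}-\tfrac{1}{B_0}\mathcal{L})\zeta_{(\alpha,k)}$ and the second with $\tfrac{1}{\mu}G\psi_{(\alpha,k)}$ and summing, self-adjointness of $G$, $\mathcal{L}$ and of multiplication by $\underline{\mathfrak{a}}$ forces the $O(1/\epsilon)$ principal contributions to cancel, leaving
\begin{equation*}
\tfrac{1}{2}\tfrac{d}{dt}\Bigl[(\underline{\mathfrak{a}}\zeta_{(\alpha,k)},\zeta_{(\alpha,k)})_2+\tfrac{1}{B_0}(-\mathcal{L}\zeta_{(\alpha,k)},\zeta_{(\alpha,k)})_2+\tfrac{1}{\mu}(G\psi_{(\alpha,k)},\psi_{(\alpha,k)})_2\Bigr]\leq C_1(K^\epsilon).
\end{equation*}
The Rayleigh--Taylor lower bound $\underline{\mathfrak{a}}\geq a_0/2$ (propagated for short times by continuity, see \S\ref{rayleigh_section}), the coercivity $(-\mathcal{L}\zeta,\zeta)_2\sim|\nabla^\gamma\zeta|_2^2$ and the equivalence $\tfrac{1}{\mu}(G\psi,\psi)_2\sim|\mathfrak{P}\psi|_2^2$ recover the summand $|\zeta_{(\alpha,k)}|_{H^1_\sigma}^2+|\mathfrak{P}\psi_{(\alpha,k)}|_2^2$ of $\mathcal{E}_\sigma^N$. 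The low-order contribution $|\zeta|_2^2+|\mathfrak{P}\psi|_{H^{t_0+3/2}}^2$ is handled by the same symmetrization on the undifferentiated equation combined with the elliptic regularity of $G$.

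The main obstacle is the singular bottom source $\frac{1}{\epsilon}d^jG(\beta b,\ldots)\psi$ in $R^2_{(\alpha,k)}$. When paired against $(\underline{\mathfrak{a}}-\tfrac{1}{B_0}\mathcal{L})\zeta_{(\alpha,k)}$ in the symmetrized estimate, one integrates by parts to shift $\mathcal{L}$ onto the $G$ side, then uses the first equation in the form $\frac{1}{\mu\epsilon}G\psi=\partial_t\zeta+\nabla^\gamma\cdot(\underline{V}\zeta)$ to trade the remaining $1/\epsilon$ factor for a time derivative which is already controlled in $\mathcal{E}_\sigma^N$; the half-derivative of spatial regularity missing from $\mathfrak{P}$ is supplied precisely by the surface tension piece $\tfrac{1}{B_0}|\nabla^\gamma\zeta_{(\alpha,k)}|_2^2$. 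This is where the assumption $1/B_0=O(\mu)$ enters, so that the resulting constant depends on the product $B_0\mu$ rather than on $B_0$ alone. Finally, the $+\epsilon$ correction in \eqref{desired} appears at the initialization stage, when the initial $(\epsilon\partial_t)^k$ derivatives entering $\mathcal{E}_\sigma^N(U^\epsilon_{|t=0})$ are rewritten via the equation in terms of spatial data, producing an $O(\epsilon)$ error; a Gronwall-type integration of the time-derivative estimate above then yields the announced bound.
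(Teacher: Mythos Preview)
Your proposal attempts a direct energy estimate for \emph{all} multi-indices $(\alpha,k)$ simultaneously, treating the singular bottom terms $\frac{1}{\mu\epsilon}d^jG(\beta\partial^{l_1}b,\ldots)\partial^\delta\psi$ as source terms to be absorbed inside the symmetrized energy identity. This is precisely the route the paper warns cannot work: as stated explicitly in the Remark following Proposition~\ref{lemma_control}, once spatial derivatives hit the bottom the quasilinearized system is \emph{not} symmetrizable with respect to the $O(1/\epsilon)$ terms, so the cancellation you rely on in your ``core step'' fails for $\alpha\neq 0$. Your proposed fix---shifting $\mathcal{L}$ onto the $dG$ side and invoking $\frac{1}{\mu\epsilon}G\psi=\partial_t\zeta+\ldots$---does not close: the first equation relates $\frac{1}{\mu\epsilon}G\psi_{(\alpha,k)}$ to $\partial_t\zeta_{(\alpha,k)}$, not $\frac{1}{\epsilon}\zeta_{(\alpha,k)}$ to anything bounded, so the pairing $\frac{1}{\mu\epsilon}(dG(\partial b,\ldots)\partial^\delta\psi,\,\underline{\mathfrak a}\,\zeta_{(\alpha,k)})_2$ remains genuinely singular. (Incidentally, the singular bottom source lives in $R^1_{(\alpha,k)}$, the $\zeta$-equation, not in $R^2_{(\alpha,k)}$ as you write.)

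The paper's argument is structurally different and has two steps. \textbf{First} (Lemma~\ref{lemma_control}), one performs the symmetrized energy estimate \emph{only} for pure time derivatives $(\alpha,k)=(0,k)$. Since $b$ is time-independent, no singular bottom terms are produced, the principal operator $\mathcal{A}_\sigma[U]$ is genuinely symmetrizable, and one obtains $|\zeta_{(k)}|_{H^1_\sigma}+|\mathfrak P\psi_{(k)}|_2\leq C_0+C_1(K)(t+\epsilon)$ for all $k\leq N$. (Here a subprincipal correction $F_k=\epsilon(\mathcal S^2[U]U_{(k-1)},U_{(k)})_2$ is added to the energy to absorb a second-order term coming from $\mathcal{K}_{(k)}$; the $+\epsilon$ in \eqref{desired} comes from bounding $|F_k|\leq C\epsilon$, not from initialization as you suggest.) \textbf{Second} (Lemma~\ref{lemma_recover}), spatial regularity is recovered \emph{by induction on $|\alpha|$}, not by another energy estimate: one uses the equations pointwise to write, schematically, $\frac{1}{\mu}G\psi_{(\alpha,k)}=(\epsilon\partial_t)\zeta_{(\alpha,k)}+\ldots$ and $\zeta_{(\alpha,k)}=\epsilon\partial_t\psi_{(\alpha,k)}+\ldots$, pairs these against $\psi_{(\alpha,k)}$ and $\zeta_{(\alpha,k)}$ respectively, and controls the right-hand side by the already-established bounds at level $|\alpha|-1$. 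The Fourier-multiplier inequality $\frac{(1+\sqrt{\mu}|D^\gamma|)^{1/2}}{1+B_0^{-1/2}|D^\gamma|}\leq C(B_0\mu)$ is what converts the $H^1_\sigma$-control on $\zeta_{(\alpha-e_j,k+1)}$ into a bound on $|\mathfrak P\psi_{(\alpha,k)}|_2$; this is where the dependence on $B_0\mu$ (rather than $B_0$ alone) enters, and it is a purely algebraic step in the induction rather than something happening inside the energy identity.
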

\begin{proof}[Proof of Proposition \ref{keypoint}]

The quantity $\epsilon$ is fixed throughout the proof. We  consider  a solution $U^\epsilon = (\zeta^\epsilon,\psi^\epsilon)$ on a time interval $[0;T^\epsilon]$ of \eqref{ww_equation1} given by the standard local existence Theorem \ref{david}. To alleviate the notations, we omit, when no ambiguity is possible, the $\epsilon$ in the notation $U^{\epsilon}$ in the following estimates. Moreover, $C$ will stand for any non decreasing continuous positive function. 
Let us first sketch the proof. \\
(i) The evolution equation for time derivatives of the unknowns is "skew symmetric" with respect to $\ds \frac{1}{\epsilon}$ terms: these large terms cancel one another in energy estimates. This allows us to get the improved estimate \eqref{desired} for time derivatives:  \begin{equation}\vert (\epsilon\partial_t)^k\zeta\vert_{H^1_\sigma} + \vert \mathfrak{P}(\epsilon\partial_t)^k\psi\vert_2\leq C_1(K)t+C_0;\quad k=0..N\label{time_estimate}\end{equation} This is proved in Lemma \ref{lemma_control}. \\
(ii) To get higher order estimates (with respect to space variables), we use the equation \eqref{ww_equation2} to get $$\frac{1}{\mu}G((\epsilon\partial_t)^k\psi) = (\epsilon\partial_t)^{k+1}\zeta + \epsilon R $$ where $\vert R\vert_{H^1_\sigma} \leq C_1(K)$.  By the first step of the proof, the term $(\epsilon\partial_t)^{k+1}\zeta $ satisfies the "good" control \eqref{time_estimate} in $H^1_\sigma$ norm. Since $G$ is of order one and elliptic, this should allow us to recover one space derivative for $\B(\epsilon\partial_t)^k\psi$, with the desired control \eqref{desired}. But there is a little constraint, due to the factor $\frac{1}{B_0}$ in the definition of the $H^1_\sigma$ norm \eqref{defh1}: $$\vert f\vert_{H^1_\sigma}^2 = \vert f\vert_2^2+\frac{1}{B_0}\vert\nablag f\vert_2^2.$$ One has to use precisely the definition of the operator $\B$ given in \eqref{defp} by $$\B = \frac{\D}{(1+\sqrt{\mu}\D)^{1/2}}$$ and the inequality $$\vert \B\psi\vert_2 \leq \frac{M}{\mu}  (G\psi,\psi)_2$$ (see below Proposition \ref{equivanorme}) to get that $\D\B\psi$ is bounded in $H^1$ norm by a constant depending on $\mu B_0$. One can use the same technique to control $\Delta^\gamma\zeta$ with $(\epsilon\partial_t)^{k+1}\psi$ in $H^1_\sigma$ norm, and recover again one space derivative for $\zeta$. By finite induction, one recovers the control of the form \eqref{desired} for $\zetak$ and $\psik$. This is done in Lemma \ref{lemma_recover}. \\

For this proof, we choose (taking smaller time existence if necessary) $T^\epsilon$ such that 

\begin{equation}\forall t\in [0;T^\epsilon],\quad \rt(t) \geq \frac{a_0}{2}\quad\text{ and }\quad h(t)=1+\epsilon\zeta(t)-\beta b \geq \frac{h_{min}}{2}.\label{condition_time} \end{equation}
The first condition may be satisfied given the continuity in time of $\rt$ (see the definition of $\rt$ in \eqref{rtdef}) on the time interval $[0;T^\epsilon]$, provided that $\partial_t \rt\in L^\infty([0;T^\epsilon];\R^d)$ (this is proved in the control of $A_3$ below). The second condition is satisfied by the fact the solution $\zeta$ lives in the space $C([0;T^\epsilon];H^{t_0+2}(\R^d))$, and the continuous embedding $H^{t_0}\subset L^\infty(\R^d)$ given $t_0 >d/2$. This gives the continuity in time of $h$ (note that $b$ is also in $L^\infty(\R^d)$).\\

Now let us prove the desired estimate \eqref{desired} for time derivatives of $\psi$ and $\zeta$. Because of the energy space introduced in \eqref{energie_theoreme}, we want to control quantities like  
\begin{equation*}
\mathcal{E}_{(\alpha,k)} = \modd{\zeta_{(\alpha,k)}}_{H^1_\sigma}+\modd{\mathfrak{P} \psi_{(\alpha,k)}}_2, \qquad \vert (\alpha,k)\vert \leq N
\end{equation*}
 with $\alpha = 0$. Let $k$ be fixed. We look for the equations for the unknown $U^k=(\zetak,\psik)$. We denote the Rayleigh-Taylor coefficient by
$$\underline{\mathfrak{a}} = 1+\epsilon(\epsilon\partial_t+\epsilon \underline{V}\cdot\nabla^{\gamma})\underline{w}.$$ By differentiating $k$ times the equations  \eqref{ww_equation2} with $(\epsilon\partial_t)^k$, one find after some computations the following result:

\begin{lemma}\label{lemma_quasilinear} The unknown $\Uk$ satisfies the following equation:
\begin{equation}
\partial_t \Uk+\frac{1}{\epsilon}\mathcal{A}_\sigma[U]\Uk+B[U]\Uk+C_k[U]U_{(k-1)} = {}^t(R_k,S_k)\label{quasilinear}
\end{equation}
with the operators $$\ds \mathcal{A}_\sigma[U] = \begin{pmatrix}
&0& -\frac{1}{\mu}G \\
&\rt-\frac{1}{B_0}\nablag\cdot \K(\sqrt{\mu}\epsilon\nablag\zeta)\nablag &0
\end{pmatrix},$$
$$\ds \mathcal{B}[U] = \begin{pmatrix}
&\Vd\cdot\nablag & 0 \\
&0 &\Vd\cdot\nablag
\end{pmatrix},$$
and $$\ds \mathcal{C}_k[U] = \begin{pmatrix}
&0& -\frac{1}{\mu}dG(\epsilon\partial_t\zeta) \\
&\frac{1}{B_0\epsilon}\nablag\cdot \K_{(k)}[\sqrt{\mu}\nablag\zeta] &0
\end{pmatrix},$$
and where $$\K(\nablag\zeta) = \frac{(1+\vert\nablag\zeta\vert^2)I_d-\nablag\zeta\otimes\nablag\zeta}{(1+\vert\nablag\zeta\vert^2)^{3/2}},$$
and $$\K_{(k)}[\nablag\zeta]F = -\nablag\cdot \Big[ d\K(\nablag\partial_t\zeta)\nablag F+d\K(\nablag F)\nablag \partial_t\zeta\Big].$$
The residual ${}^t(R_k,S_k)$ satisfies the following control : \begin{equation}\vert R_k\vert_{H^1_\sigma}+\vert\mathfrak{P}S_k\vert_2\leq C_1(K).\label{reste}\end{equation}\end{lemma} 
\begin{remark}
Let us explain why the residual has to satisfy an estimate of the form \eqref{reste}. The energy for $\zetak,\psik$ is of the form $$\vert\B \psik\vert_2^2+\vert\zetak\vert_2^2+\frac{1}{B_0}\vert\nablag\zetak\vert_2^2.$$ In order to get energy estimate, we differentiate this energy with respect to time, which leads to the control of terms of the form $$\frac{1}{\mu}(\partial_t \psik,G\psik)_2,\qquad (\partial_t\zetak,\zetak)_{H^1_\sigma}.$$ In order to control these quantities, we replace $\partial_t(\zetak,\psik)$ by their expressions given by the equation \eqref{quasilinear}. All terms satisfying a control of the form \eqref{reste} are harmless for the energy estimate, since they lead to the control of terms such as $$\frac{1}{\mu} (G\psik,S_k),\qquad (\zetak,R_k)_{H^1_\sigma}$$ which is easily done.
\end{remark}
\begin{proof}[Proof of Lemma \ref{lemma_quasilinear}]
The differentiation of the first equation of $\eqref{ww_equation2}$ takes the form (recall that $G\psi$ stands for $G[\epsilon\zeta,\beta b]\psi$ and thus any derivative of $G$ with respect to $\zeta$ involves an $\epsilon$ factor)   \begin{align*}
(\epsilon\partial_t)^k\partial_t\zeta &= \frac{1}{\mu\epsilon} G(\epsilon\partial_t)^k\psi + \frac{1}{\mu\epsilon}dG((\epsilon\partial_t)^k\zeta)\psi+\frac{1}{\mu\epsilon} dG(\epsilon\partial_t\zeta)\psi_{(k-1)}\\ &+\frac{1}{\mu\epsilon}\sum_{1\leq j_1+...+j_m+l\leq k\atop 1\leq l} \epsilon^{j_1+...+j_m}dG((\epsilon\partial_t)^{j_1}\zeta,...,(\epsilon\partial_t)^{j_m}\zeta)(\epsilon\partial_t)^l\psi.\end{align*}Using the explicit shape derivative formula with respect to the surface for $G$ given by Proposition \ref{321}, we get that $$dG((\epsilon\partial_t)^k\zeta)\psi = -\epsilon G((\epsilon\partial_t)^k\zeta \underline{w})-\epsilon\mu\nablag\cdot((\epsilon\partial_t)^k\zeta\Vd),$$ and thus using the definition of $\zetak,\psik$ given by \eqref{defpsia}, one gets the following evolution equation: $$(\epsilon\partial_t)^k\partial_t\zeta +\nablag\cdot(\Vd\zetak)- \frac{1}{\mu\epsilon} G\psik-\frac{1}{\mu\epsilon} dG(\epsilon\partial_t\zeta)\psi_{(k-1)}=\frac{1}{\mu\epsilon}\sum_{1\leq j_1+...+j_m+l\leq k\atop 1\leq l} \epsilon^{j_1+...+j_m}dG((\epsilon\partial_t)^{j_1}\zeta,...,(\epsilon\partial_t)^{j_m}\zeta)(\epsilon\partial_t)^l\psi.$$The term $dG(\epsilon\partial_t\zeta)\psi_{(k-1)}$  is controlled in $L^2$ norm, but not in $H^1_\sigma$ norm. The terms of the right hand side involve derivatives of $\psi$ of order less than $N-2$ and then can be put in a residual $R_k$ with a control \eqref{reste}, using Proposition \ref{328}. The differentiation of the second equation of \eqref{ww_equation2} involves the linearization of the surface tension term $$ \frac{1}{\epsilon\sqrt{\mu}}(\epsilon\partial_t)^k \kappa_\gamma (\epsilon\sqrt{\mu}\zeta) = -\nablag\cdot\K(\epsilon\sqrt{\mu}\nablag\zeta)\nablag (\epsilon\partial_t)^k\zeta + K_{(k)}[\epsilon\sqrt{\mu}\nablag\zeta](\epsilon\partial_t)^{k-1}\zeta+...$$ The second order operator $\mathcal{K}_{(k)}$  is not controlled in $H^{1/2}$ norm (or $\vert \B\cdot\vert_2$ norm). The other terms can be put in the residual $S_k$ with a control \eqref{reste}.  See \cite{david} Chapter 9 for a complete proof of the evolution equation in terms of unknowns $\psik$,$\zetak$.\end{proof}\hfill$\qquad \Box  $
 
We now show that the singular terms of size $O(\frac{1}{\epsilon})$ are transparent in the energy estimates for the evolution equation \eqref{quasilinear}. This yields the bounds announced in \eqref{time_estimate}.

%

\begin{proposition}
One has the following estimate for all $0\leq k\leq N$: 
$$ \vert \zetak\vert_{H^1_\sigma}+\vert\mathfrak{P}\psik\vert_2\leq C_2(K)t+C_0,$$
where $C_0=C(\mathcal{E}_\sigma^N(U_{\vert t=0}))$ and $C_2=\ds C(\frac{1}{h_{\min}},\frac{1}{a_0},\vert b\vert_{H^{N+1\vee t_0+1}})$ are non decreasing function of their arguments. \label{lemma_control}
\end{proposition}
 \begin{remark}
An evolution equation can also be obtained for space derivatives, and then takes the form  \begin{equation}
\partial_t U_{(\alpha,k)}+\frac{1}{\epsilon}\tilde{\mathcal{A}}_\sigma[U]U_{(\alpha,k)}+B[U]U_{\widecheck{(\alpha,k)}}+C_{(\alpha,k)} = {}^t(R_{(\alpha,k)},S_{(\alpha,k)})
\end{equation} with $$\ds \tilde{\mathcal{A}}_\sigma[U] = \begin{pmatrix}
&0& -\frac{1}{\mu}G \\
&\rt-\frac{1}{B_0}\nablag\cdot \K(\sqrt{\mu}\epsilon\nablag\zeta)+\sum_{\vert\alpha\vert +\vert\delta\vert\leq N} dG(\partial^{\alpha_1} b,...,\partial^{\alpha_j}b)\partial^\delta \psi &0
\end{pmatrix}.$$ and 
$$U_{\widecheck{(\alpha,k)}} = \sum_{j=1}^d U_{(\alpha-e_j),k} + U_{(\alpha,k-1)}$$ with $e_j$ the unit vector in the $j-th$ direction.
This system is then non symmetrizable with respect to $\ds \frac{1}{\epsilon}$ terms, and the controls are not uniform with respect to $\epsilon$, due to spatial derivatives of the bottom. This is the reason why we have to control the time derivatives first, and then use the structure of the equation to recover higher order derivatives. In the case of a flat bottom $\beta=0$, or almost flat bottom $\beta = O(\epsilon)$, the terms involving space derivatives of $b$ in $\tilde{\mathcal{A}}_\sigma[U]$ can be put in the residual and are easy to control. The proof of Theorem \ref{david} as considered in \cite{alvarez} gives then a time existence of order $\frac{1}{\epsilon}$. 
 \end{remark}
\begin{proof}[Proof of Lemma \ref{lemma_control}]
The system \eqref{quasilinear} can be symmetrized with respect to main order terms of size $\ds \frac{1}{\epsilon}$ if we multiply it by the operator \begin{equation}
\Sy=\begin{pmatrix}
&\rt-\frac{1}{B_0}\nablag\cdot \K(\epsilon\sqrt{\mu}\nablag\zeta)\nablag&0\\
&0&\frac{1}{\mu}G
\end{pmatrix}.\label{symmetrizer1}\end{equation}
This suggests to introduce 
\begin{align}&E_0=\frac{1}{2}\vert\zeta\vert_{H^1_\sigma}^2+\frac{1}{2\mu}(G\psi,\psi)_2+\frac{1}{2B_0}(\frac{\nablag\zeta}{\sqrt{1+\epsilon^2\mu\vert\nablag\zeta\vert^2}},\nablag\zeta)_2,\qquad &k=0\nonumber \\
&E_k = (\Sy \Uk,\Uk)_2 .\qquad  &k\neq 0\label{energie_twisted}
\end{align}
The quantity $\ds\sum_{k=0}^N E_k$ is uniformly equivalent to the energy $\mathcal{E}_\sigma^N$ introduced in \eqref{energie_theoreme}:
\begin{lemma}\label{energy_size}
There exists $M=C(\frac{1}{h_{\min}},\vert\zeta\vert_{H^{t_0+1}},\vert b\vert_{H_{t_0+1}},\frac{1}{a_0},K)$  where $C$ is a non decreasing function of its arguments, such that $$\frac{1}{M}\mathcal{E}_\sigma^N \leq \ds\sum_{k=0}^N E_k\leq M\mathcal{E}_\sigma^N.$$ \end{lemma}
\begin{proof}[Proof of Lemma \ref{energy_size}] This is proved in \cite{david}; we give here the main steps of the proof for the sake of completeness. \\

(i) We start to use the following inequalities (see \cite{david} Chapter 3):

\begin{equation*}
(\psi,\frac{1}{\mu}G\psi)_2 \leq M_0\vert\mathfrak{P}\psi\vert_2^2\quad\text{ and }\quad \vert\mathfrak{P}\psi\vert_2^2 \leq M_0(\psi,\frac{1}{\mu}G\psi)_2 \label{equivanorme}
\end{equation*}
for all $\psi\in \overset{.}H{}^{1/2}(\mathbb{R}^d)$, where $M_0$ is a constant of the form $C(\frac{1}{h_{\min}},\vert\zeta\vert_{H^{t_0+1}},\vert b\vert_{H_{t_0+1}})$. The same inequality stands for space and time derivatives. 

(ii)Thanks to the Rayleigh-Taylor condition \eqref{condition_time}, we have also:
$$\frac{1}{M}\frac{1}{2}\modd{\zeta}\leq \frac{1}{2}(\zeta,\rt\zeta)_2 \leq M \frac{1}{2}\modd{\zeta},$$with $M$ a constant of the form $C(\frac{1}{a_0},K)$. The same inequality stands for space and time derivatives.

(iii) At last, $\mathcal{K}(\epsilon\sqrt{\mu}\nablag\zeta)$ is a $d\times d$ symmetric matrix uniformly bounded with respect to time and $\epsilon$, 
$$\frac{1}{M}\vert\zetak\vert_{H^1_\sigma}\leq (\mathcal{K}(\epsilon\sqrt{\mu}\nablag\zeta)\nablag\zetak,\nablag\zetak)_2\leq M\vert\zetak\vert_{H^1_\sigma}$$ where $M$ is a constant of the form $C_1(K)$. \hfill$\qquad \Box  $
\end{proof}

Because the term $\mathcal{C}_k[U]U^{(k-1)}$ which appear in the equation \eqref{quasilinear} contains order two derivatives with respect to $\zeta_{(k-1)}$, the time derivative of the energies $E_k$ are actually not controlled by the energy $\mathcal{E}^N_\sigma$. To overcome this problem, we slightly adjust the twisted energy $E_k$ for $k=N$ by defining 
\begin{align*}
F_k &= \epsilon(\sy U_{(k-1)},U_{(k)})_2 &\text{ if } k=N,\\ &=0&\text{ if }k\neq N,
\end{align*}
where \begin{equation*}
\sy = \begin{pmatrix}
&\frac{1}{B_0}\Kk &0 \\
&0 &\frac{1}{\mu}dG(\epsilon\partial_t\zeta)
\end{pmatrix}.
\end{equation*}
The presence of the $\epsilon$ in $F_k$ is a consequence of the first time scaling $t'=t\epsilon$. The matrix operator $\sy$ symmetrizes the subprincipal term $\mathcal{C}_k$. One derives with respect to time this "energy". Our goal is to have, for all $0\leq k\leq N$ \begin{equation*}
\frac{d}{dt}(E_k+F_k) \leq C_1(K).
\end{equation*}
We will at the end recover a similar estimate for the energy $E_k$ by a Young inequality in the control of $F_k$ by the $E_j$.\\

\textbf{Control of $\frac{d}{dt}E_0$}\\

One get, using the symmetry of $G$:  $$\frac{dE_0}{dt} = (\partial_t\zeta,\zeta)_2+\frac{1}{B_0}(\frac{\nablag\zeta}{\sqrt{1+\epsilon^2\mu\vert\nablag\zeta\vert^2}},\nablag\partial_t\zeta)_2+\frac{1}{\mu}(G\psi,\partial_t\psi)_2+A_1,$$ where, the commutator terms $[G,\partial_t]$ and $[\frac{1}{\sqrt{1+\epsilon^2\mu\vert\nablag\zeta\vert^2}},\partial_t]$ are $$A_1 = -\big(\frac{1}{2B_0}\frac{(\epsilon^2\mu\nablag\zeta\cdot\partial_t\nablag\zeta) \nablag\zeta}{(1+\epsilon^2\mu\vert\nablag\zeta\vert^2)^{3/2}},\nablag\zeta\big)_2 +\frac{1}{2\mu}(dG(\epsilon\partial_t\zeta)\psi,\psi)_2.$$ Using the equations  $\eqref{ww_equation2}$ to replace $\partial_t\zeta$ and $\partial_t\psi$ in this equality, one can write 

\begin{align*}\frac{dE_0}{dt} &= \frac{1}{\mu\epsilon}(G\psi,\zeta)_2-\frac{1}{\mu\epsilon}(G\psi,\zeta)_2 +\frac{1}{\mu\epsilon}\frac{1}{B_0}(\frac{\nablag\zeta}{\sqrt{1+\epsilon^2\mu\vert\nablag\zeta\vert^2}},\nablag G\psi)_2-\frac{1}{\mu\epsilon}\frac{1}{B_0}(\frac{\nablag\zeta}{\sqrt{1+\epsilon^2\mu\vert\nablag\zeta\vert^2}},\nablag G\psi)_2 
\\&+ A_1+B_1+B_2,\end{align*} where  

$$B_1 = -\frac{1}{2\mu}(\vert\nablag\psi\vert^2,G\psi)_2,$$
$$B_2 = \frac{1}{\mu}\big(\frac{(G\psi+\mu\nablag(\epsilon\zeta)\cdot\nablag\psi)^2}{2(1+\epsilon^2\mu\vert\nablag \zeta\vert^2)},\frac{1}{\mu}G\psi\big)_2.$$
The large terms of order $\ds\frac{1}{\epsilon}$ cancel one another, thanks to the symmetry of the equation. One must now control $A_1,B_1$ and $B_2$ in order to get the desired estimate for $E_0$.\\

\textit{- Control of $A_1$}
Let us start with the first term of $A_1$. We use the fact that $\vert\epsilon\partial_t\zeta\vert_{H^1_\sigma}$ and $\vert\zeta\vert_{H^1_\sigma}$ are bounded by $C_2(K)$, since $N\geq 2$. Moreover, since $N\geq t_0+1$, one has that $\nablag\zeta\in L^\infty(\mathbb{R}^d)$ with a control by $C_2(K)$, and thus one can write, using Cauchy-Schwartz inequality,
\begin{align*}\vert -\big(\frac{1}{2B_0}\frac{(\epsilon^2\mu\nablag\zeta\cdot\partial_t\nablag\zeta) \nablag\zeta}{(1+\epsilon^2\mu\vert\nablag\zeta\vert^2)^{3/2}},\nablag\zeta\big)_2\vert &\leq \frac{1}{2} \vert\epsilon\partial_t\zeta\vert_{H^1_\sigma}\vert \zeta\vert_{H^1_\sigma}\vert \nablag\zeta\vert_{L^\infty(\mathbb{R}^d)}^2 \\
&\leq C_2(K). \end{align*}

To control the second term of $A_1$, we use the Proposition \ref{328} in the Appendix with $s=0$ to write
\begin{align*}
\vert \frac{1}{2\mu}(dG(\epsilon\partial_t\zeta)\psi,\psi)_2\vert & \leq  M_0\vert \epsilon\partial_t\zeta\vert_{H^{t_0+1}}\vert\B\psi\vert_2\vert\B\psi\vert_2,
\end{align*}
where $M_0$ is a constant of the form $C(\frac{1}{h_{\min}},\vert\zeta\vert_{H^{t_0+1}},\vert b\vert_{H^{t_0+1}}).$ 
Moreover, $\epsilon\partial_t\zeta$ is controlled in $H^{t_0+1}$ norm by $C_2(K)$, since $N\geq t_0+2$ and thus, one has 
\begin{align*}
\vert \frac{1}{2\mu}(dG(\epsilon\partial_t\zeta)\psi,\psi)_2\vert & \leq  C_2(K).
\end{align*}
\textit{- Control of $B_1$}
One has to remark that $\nablag\psi$ is $L^\infty(\mathbb{R}^d)$. Indeed, one has $$\vert\Lambda^{t_0}\nablag\psi\vert_2\leq C \vert\mathfrak{P}\psi\vert_{H^{t_0+3/2}},$$ where $C$ does not depends on $\mu$ nor $\psi$. Since this last term is controlled by the energy, one has that $\nablag\psi\in L^\infty(\mathbb{R}^d)$ with a control by $C_2(K)$. Thus, one can write 
\begin{align*}
\vert\frac{1}{2\mu}(\vert\nablag\psi\vert^2,G\psi)_2\vert&\leq \vert\nablag\psi\vert_{L^\infty(\mathbb{R}^d)}\frac{1}{\mu}\vert G\psi\vert_2\vert\nablag\psi\vert_2\\
&\leq C_2(K).
\end{align*}
Now, using the second point of Proposition \ref{314} with $s=1/2$, one gets that 

\begin{align*}\vert\frac{1}{2\mu}(\vert\nablag\psi\vert^2,G\psi)_2\vert\leq C_2(K)M(3/2)\vert\B\psi\vert_{H^{1}}\end{align*}
where $M(3/2)$ is a constant of the form $\ds C(\frac{1}{h_0},\vert\zeta\vert_{H^{t_0+1}},\vert b\vert_{H^{t_0+1}},\vert\zeta\vert_{H^{3/2}},\vert b\vert_{H^{3/2}})$.
Just note that the first point of Proposition \ref{314} does not suffice here, since we want a control of $\frac{1}{\mu}G\psi$ and not only of $\frac{1}{\mu^{3/4}}G\psi$. This is the interest of the second point of this Proposition: the Dirichlet-Neumann operator $G$ has to be seen as a $3/2$ order operator in order to be controlled uniformly with respect to $\mu$. \\

\textit{- Control of $B_2$}
We already noticed in the previous controls that $\nablag\zeta$ and $\nablag\psi$ were in $L^\infty(\mathbb{R}^d)$ with a $C_2(K)$ control. Moreover, by Proposition \ref{314} with $s=t_0+1/2$, one gets \begin{align*}\frac{1}{\mu}\vert \Lambda^{t_0} G\psi\vert_2 &\leq M(t_0+3/2)\vert\B\psi\vert_{H^{t_0+1/2}} \\
&\leq C_2(K)\end{align*} and thus $\frac{1}{\mu}G\psi$ is $L^\infty(\mathbb{R}^d)$ with a $C_2(K)$ control. Then, we write, using Cauchy-Schwartz inequality,

\begin{align*}
B_2&\leq\frac{1}{\mu} \vert G\psi\vert_{L^\infty(\mathbb{R}^d)}+\epsilon\mu\vert\nablag\zeta\vert_{L^\infty(\mathbb{R}^d)}\vert\nablag\psi\vert_{L^\infty(\mathbb{R}^d)})\bigg( \vert\nablag\psi\vert_{L^\infty(\mathbb{R}^d)}\epsilon\vert\nablag\zeta\vert_2+\vert G\psi\vert_2\bigg)\frac{1}{\mu}\vert G\psi\vert_2 \\
&\leq C_2(K)
\end{align*}
where we used again Proposition \ref{314} to control $\frac{1}{\mu}G\psi$ in $L^2$ norm. \\

\textit{- Synthesis} To conclude, we proved that $$\frac{dE_0}{dt}\leq C_2(K),$$ which gives, by integrating in time, the following inequality: $$\forall t\in[0;T^\epsilon],\qquad E_0(t)\leq C_2(K)t+C_0$$ where $C_0$ only depends on the norm of the initial data.

\textbf{Control of $\frac{d}{dt}(E_k+F_k)$ for $k\neq 0$} \\

We deal here with the case $k=N$ (if $k<N$, there is no term of order more than $N$ that appears in the derivative of the energy). Recall that for $k=N$, we have $$E_k+F_k = (\Sy \Uk,\Uk)_2+\epsilon(\sy U_{(k-1)},\Uk)_2.$$ Therefore, by derivating in time, and using the symmetry of $\Sy$ we get 

\begin{align*}
\frac{d}{dt}(E_k+F_k) &= (\Sy\Uk,\partial_t \Uk)_2+\epsilon(\sy U_{(k-1)},\partial_t\Uk)_2\\
&+(\big[\partial_t,\Sy\big]\Uk,\Uk)_2+(\partial_t(\sy U_{(k-1)}),\Uk)_2.
\end{align*}
We replace $\partial_t\Uk$ by its expression given in the quasilinear system \eqref{quasilinear}. One gets 

\begin{align*}
\frac{d}{dt}(E_k+F_k) &= (\Sy\Uk,-\frac{1}{\epsilon}\mathcal{A}_\sigma[U]\Uk-B[U]\Uk-\mathcal{C}_k[U]U_{(k-1)})_2 \\
& + \epsilon(\sy U_{(k-1)},-\frac{1}{\epsilon}\mathcal{A}_\sigma[U]\Uk-B[U]\Uk-\mathcal{C}_k[U]U_{(k-1)})_2 \\
&+(\big[\partial_t,\Sy\big]\Uk,\Uk)_2+(\partial_t(\sy U_{(k-1)}),\Uk)_2 \\
&+(\Sy\Uk,{}^t(S_k,R_k))_2+\epsilon(\sy U_{(k-1)},{}^t(S_k,R_k))_2.
\end{align*}
Thanks to the symmetry, the large terms of size $\ds \frac{1}{\epsilon}$ cancel one another, ie $$\frac{1}{\epsilon}(\Sy\Uk,\mathcal{A}_\sigma[U]\Uk)_2 = 0.$$ This is fundamental and based on the fact that the evolution equation for the unknown $\Uk$ is still symmetrizable with respect to $\frac{1}{\epsilon}$ terms. Again, it is not the case for the evolution equation in term of spatial derivatives $ U_{(\alpha,k)}$. Then, the commutators between $\Uk$ and subprincipal terms of $\mathcal{C}_k$, which are not controlled by the energy (mainly because of the order two operators) also cancel one another, because of the choice of $F_k$. More precisely, one gets $$-(\Sy\Uk,\mathcal{C}_k[U]U_{(k-1)})_2-(\sy U_{(k-1)},\mathcal{A}_\sigma[U]\Uk)_2=0.$$ One can also check that $$\epsilon(\sy U_{(k-1)},\mathcal{C}_k[U]U_{(k-1)})_2 = 0. $$To conclude, we proved that 

$$\frac{d}{dt}(E_k+F_k) = A_1+A_2+A_3+A_4+B_1+B_2+B_3+B_4+B_5+B_6,$$
where $(\big[\partial_t,\Sy\big]\Uk,\Uk)_2=A_1+A_2+A_3$ with

$$A_1 = \frac{1}{2\mu}(dG(\epsilon\partial_t\zeta)\psik,\psik)_2,$$ 
$$A_2 = \frac{1}{2B_0}(\partial_t\ \big(K(\epsilon\sqrt{\mu}\nablag\zeta)\big)\nablag\zetak,\nablag\zetak\bigg)_2,$$
$$A_3 =\frac{1}{2} ((\partial_t\rt)\zetak,\zetak)_2.$$
The term $(\partial_t(\sy U_{(k-1)}),\Uk)_2=A_4$ is given by
$$A_4 = \frac{1}{\mu}(\epsilon\partial_t\big(dG(\epsilon\partial_t\zeta)(\psi_{(k-1)})\big),\psik)_2+\frac{1}{B_0}(\epsilon\partial_t\bigg(\mathcal{K}_{(k)}[\sqrt{\mu}\nablag\zeta]\zeta_{(k-1)}\bigg),\zetak)_2.$$
We denote the terms coming from the evolution equation contained in $-(\Sy\Uk-B[U]\Uk)_2$  by

$$B_1 = (-\Vd\nablag\cdot\zetak,\rt\zetak)_2-\frac{1}{B_0}(\nablag (\Vd\cdot\nablag\zeta),\mathcal{K}(\sqrt{\mu}\epsilon\nablag\zeta)\nablag\zetak)_2$$ and
$$B_5 = -(\Vd\cdot\nablag\psik,\frac{1}{\mu}G\psi_k)$$
and the term $-\epsilon(\sy U_{(k-1)},B[U]\Uk)_2$ is 
$$B_2 = -\epsilon\frac{1}{\mu}(dG(\partial_t\zeta)\psi_{(k-1)},\Vd\cdot\nablag\psik)_2-\frac{1}{B_0}\epsilon(\mathcal{K}_{(k)}[\sqrt{\mu}\nablag\zeta]\zeta_{(k-1)},\Vd\cdot\nablag\zetak)_2.$$
Finally, the residual terms $(\Sy\Uk,{}^t(S_k,R_k))_2+\epsilon(\sy U_{(k-1)},{}^t(S_k,R_k))_2$ which are the most easy terms to control are denoted by 
$$B_3 = (R_k,\rt\zetak)_2,$$
$$B_4 =  +\epsilon\frac{1}{\mu}(dG(\partial_t\zeta)\psi_{(k-1)},S_k)_2+\epsilon(\mathcal{K}_{(k)}[\sqrt{\mu}\nablag\zeta]\zeta_{(k-1)},R_k)_2$$ and
$$B_6 = (S_k,\frac{1}{\mu}G\psik)_2.$$
We then need to control all these terms by a constant of the form $C_2(K)$ in order to get the desired estimate. These controls requires bounds for quantities such as $dG(h,k)\psi$ or $G\psi$, which can be obtained by the use of Propositions \ref{314}, \ref{318} and \ref{328}.

\textit{-Control of $A_1$}
Using Proposition \ref{328}  with $s=0$, one gets \begin{align*}
\vert \frac{1}{2\mu}(dG(\epsilon\partial_t\zeta)\psik,\psik)_2\vert &\leq  M_0\vert\epsilon\partial_t\zeta\vert_{H^{t_0+1}}\vert\mathfrak{P}\psik\vert_2^2\\
&\leq C_2(K)
\end{align*}
since $N\geq t_0+2$. \\

\textit{-Control of $A_2$}
We start to check  that $\partial_t(K(\epsilon\sqrt{\mu}\nablag\zeta))$ is bounded in $L^\infty(\mathbb{R}^d)$ by $C_2(K)$:

\begin{align*}\partial_t(K(\epsilon\sqrt{\mu}\nablag\zeta)) &= \frac{2\epsilon^2\mu\nablag\zeta\cdot\partial_t\nablag\zeta Id-\epsilon^2\mu(\nablag\zeta\otimes\nablag\partial_t\zeta+\nablag\partial_t\zeta\otimes\nablag\zeta)}{(1+\epsilon^2\mu\vert\nablag\vert^2)^{3/2}}\\
&-\frac{3}{2}\frac{(1+\epsilon^2\mu\vert\nablag\zeta\vert^2Id-\epsilon^2\mu\nablag\zeta\otimes\nablag\zeta)(2\epsilon^2\mu\partial_t\nablag\zeta\cdot\nablag\zeta)}{(1+\epsilon^2\mu\vert\nablag\zeta\vert^2)^{5/2}}.\end{align*}

Because of the energy expression given by \eqref{energie_theoreme}, one has that $\nablag\zeta\in L^\infty(\R^d)$ (since $N\geq t_0+1$), $\epsilon\partial_t\nablag\zeta \in L^\infty(\R^d)$ (since $N\geq t_0+2$) and thus we get  
$$\vert \partial_t(K(\epsilon\sqrt{\mu}\nablag\zeta))\vert_{L^\infty(\R^d)}\leq C_2(K).$$\\

Thus, one can write 
\begin{align*}
\left\vert \frac{1}{2B_0}\bigg(\partial_t\ \big(K(\epsilon\sqrt{\mu}\nablag\zeta)\big)\nablag\zetak,\nablag\zetak\bigg)_2\right\vert &\leq C_2(K) \frac{1}{B_0}\vert\nablag\zetak\vert^2 \\
&\leq C_2(K).
\end{align*}
\textit{-Control of $A_3$}
We prove that $$\vert\partial_t\rt\vert_{L^\infty(\R^d)}\leq C_2(K).$$ Recall that $$\underline{\mathfrak{a}} = 1+\epsilon(\epsilon\partial_t+\epsilon \underline{V}\cdot\nabla^{\gamma})\underline{w}.$$

By derivating with respect to time, one get $$\partial_t\rt = (\epsilon\partial_t)^2\underline{w}+(\epsilon\partial_t)\Vd\cdot\nablag\underline{w}+\Vd\cdot\nablag(\epsilon\partial_t)\underline{w}.$$

We need to use an explicit expression of the horizontal and vertical component of the velocity at the surface $\Vd$ and $\underline{w}$ here:

$$\underline{w} = \frac{G\psi+\epsilon\mu\nablag\zeta\cdot\nablag\psi}{1+\epsilon^2\mu\vert\nablag\zeta\vert^2},\quad\text{ and }\quad \Vd = \nablag\psi-\epsilon\underline{w}\nablag\zeta. $$ If one takes brutally the $\epsilon\partial_t$ and $(\epsilon\partial_t)^2$ derivatives of these expressions, one has to deal with terms of the form $$d^2G((\epsilon\partial_t)^2\zeta)\psi,\quad d^2G(\epsilon\partial_t\zeta,\epsilon\partial_t\zeta)\psi,\quad dG(\epsilon\partial_t\zeta)(\epsilon\partial_t\psi),\quad G(\epsilon\partial_t)^2\psi.$$

For the first and second terms, we use Proposition \ref{328b} with $s=t_0+1/2$ to get 
\begin{align*}
\vert d^2G(\epsilon\partial_t\zeta,\epsilon\partial_t\zeta)\psi\vert_{H^{t_0}} &\leq M_0\mu^{3/4}\vert (\epsilon\partial_t)^2\zeta\vert_{H^{t_0+1}}^2\vert\B\psi\vert_{H^{t_0+1/2}} \\
&\leq C_2(K)
\end{align*}
since $N\geq t_0+3$. For the third term, we apply the same result to get 

\begin{align*}
\vert dG(\epsilon\partial_t\zeta)(\epsilon\partial_t\psi)\vert_{H^{t_0}}\leq M_0\mu^{3/4}\vert\epsilon\partial_t\zeta\vert_{H^{t_0+1}}\vert\B\epsilon\partial_t\psi\vert_{H^{t_0+1/2}}.
\end{align*}
There is a bit more work to achieve in order to control the term $\vert\B\epsilon\partial_t\psi\vert_{H^{t_0+1/2}}$.  We write, noticing that $N\geq t_0+3/2,$
\begin{align*}
\vert\B\epsilon\partial_t\psi\vert_{H^{t_0+1/2}} &\leq \sum_{\beta\in\mathbb{N}^d,\vert\beta\vert\leq N-1}\vert\B\epsilon\partial_t\partial^\beta\psi\vert_2 \\
&\leq \sum_{\beta\in\mathbb{N}^d,\vert\beta\vert\leq N-1} \vert \B\psi_{(\beta,1)}+\B\epsilon\underline{w}\zeta_{(\beta,1)}\vert_2
\end{align*}
using the definition of $\psi_{(\alpha,k)}$ given by \eqref{defpsia}. Using the fact that $\vert\B f\vert_2\leq C\mu^{-1/4}\vert f\vert_{H^{1/2}}$, we get  

\begin{align*}
\vert\B\epsilon\partial_t\psi\vert_{H^{t_0+1/2}} &\leq \sum_{\beta\in\mathbb{N}^d,\vert\beta\vert\leq N-1}\vert \B\psi_{(\beta,1)}\vert_2+\mu^{-1/4}\epsilon\vert \underline{w}\zeta_{(\beta,1)}\vert_{H^{1/2}} \\
&\leq \sum_{\beta\in\mathbb{N}^d,\vert\beta\vert\leq N-1}\vert \B\psi_{(\beta,1)}\vert_2+\mu^{-1/4}\epsilon\vert \underline{w}\vert_{H^{t_0}}C_2(K) \\
\end{align*}
where we used the Sobolev estimate $\vert f g\vert_{1/2} \leq C \vert f\vert_{H^{t_0}} \vert g\vert_{H^{1/2}}$ to derive the last inequality. At last, we use Proposition \ref{314} with $\underline{w}$ (see the remark at the end of this Proposition) and with $s=t_0+1/2$ to get 
\begin{align*}
\vert\B\epsilon\partial_t\psi\vert_{H^{t_0+1/2}} &\leq \sum_{\beta\in\mathbb{N}^d,\vert\beta\vert\leq N-1}\vert \B\psi_{(\beta,1)}\vert_2+\epsilon\mu^{1/2}M(t_0+1)\vert\B\psi\vert_{H^{t_0+1}}C_2(K) \\
&\leq C_2(K)
\end{align*}
and finally we proved $$\vert dG(\epsilon\partial_t\zeta)(\epsilon\partial_t\psi)\vert_{H^{t_0}}\leq C_2(K).$$

It remains $\vert G(\epsilon\partial_t)^2\psi\vert_{H^{t_0}}$ to be controlled. We use Proposition \ref{314} to write, with $s=t_0+1/2$,
$$\vert G(\epsilon\partial_t)^2\psi\vert_{H^{t_0}} \leq \mu^{3/4} M(t_0+1)\vert\B(\epsilon\partial_t)^2\psi\vert_{H^{t_0+1}}$$
and we use the previous technique to prove that $\vert\B(\epsilon\partial_t)^2\psi\vert_{H^{t_0+1}} \leq C_2(K)$.\\

Combining all these results, one can control all term of $\partial_t\rt$ in $L^\infty$ norm by $C_2(K)$, and one get the desired result: $$\vert \partial_t\rt\vert_{L^\infty(\R^d)}\leq C_2(K).$$ 
It is now easy to get 
\begin{align*}\vert \frac{1}{2} ((\partial_t\rt)\zetak,\zetak)_2\vert &\leq C_2(K)\vert\zetak\vert_2^2\\
&\leq C_2(K).
\end{align*}

\textit{- Control of $A_4$}
The fact that we get an $\epsilon\partial_t$ derivative here, and not only a $\partial_t$ derivative is essential here. The first term involves terms of the form $$(dG(\zeta_{(j)})\psi_{(l)},\psik)_2$$ with $j,l\leq N$ and the Proposition \ref{328} with $s=0$ allows to control them by $C_2(K)$. The second term of $A_4$ involves $\frac{1}{B_0}\nablag\zetak$ and $\zetak$ terms, which are controlled in $L^2$ norm, and other $L^{\infty}$ terms (see the control of $A_2$ for example). There is no other difficulty than computation, to control $A_2$ by $C_2(K)$. \\

\textit{-Control of $B_1$}
The control of the first term requires a classical symmetry trick. We write, by integrating by parts,

\begin{align*}(\Vd\cdot\nabla^\gamma\zetaa,\rt\zetaa)_2 &= -(\zetaa,\nabla^\gamma\cdot(\rt\zetaa\Vd))_2 \\
&=-(\zetaa,\nabla^\gamma\cdot(\rt\Vd)\zetaa)_2-(\zetaa,\rt\Vd\cdot\nabla^\gamma\zetaa)_2\end{align*} and thus one gets

$$(\Vd\cdot\nabla^\gamma\zetaa,\rt\zetaa)_2 = -\frac{1}{2}(\zetaa,\nabla^\gamma\cdot(\rt\Vd)\zetaa)_2.$$ We can use the same technique as in the control of $A_3$ to get $$\vert \nablag\cdot(\rt\Vd)\vert_{L^\infty(\R^d)}\leq C_2(K)$$ and we then get that 

\begin{align*}\vert (\Vd\cdot\nabla^\gamma\zetaa,\rt\zetaa)_2\vert &\leq \vert \nablag\cdot(\rt\Vd)\vert_{L^\infty(\R^d)} \vert \zetak\vert_2^2 \\
&\leq C_2(K). \end{align*}

It is the same trick for the second term. Using the symmetry of $\K$, we write
\begin{align*}
\frac{1}{B_0}(\Vd\nablag\cdot(\nablag\zetak),\Km\nablag\zetak)_2 &= \frac{1}{B_0}(\nablag\cdot(\nablag\zetak),\Km\Vd\cdot\nablag\zetak)_2
\end{align*}
and, by integrating by parts,
\begin{align*}
\frac{1}{B_0}(\Vd\nablag\cdot(\nablag\zetak),\Km\nablag\zetak)_2 &= - \frac{1}{B_0}(\nablag\zetak,\nablag\cdot(\Km\Vd)\nablag\zetak)_2\\& - \frac{1}{B_0}(\nablag\zetak,\Km\Vd\nablag\cdot(\nablag\zetak))_2 \\
&=- \frac{1}{B_0}(\nablag\zetak,\nablag\cdot(\Km\Vd)\nablag\zetak)_2\\&-\frac{1}{B_0}(\Vd\nablag\cdot(\nablag\zetak),\Km\nablag\zetak)_2
\end{align*}
and thus 
$$\frac{1}{B_0}(\Vd\nablag\cdot(\nablag\zetak),\Km\nablag\zetak)_2 = -\frac{1}{2B_0}(\nablag\zetak,\nablag\cdot(\Km\Vd)\nablag\zetak)_2.$$
We can then use the same type of computation as for the control of $A_2$ to show that $$\nablag\cdot(\Km\Vd) \in L^\infty(\R^d)$$ with a $C_2(K)$ bound, and finally we get, by Cauchy-Schwartz's inequality 
\begin{align*}\vert \frac{1}{B_0}(\Vd\nablag\cdot(\nablag\zetak),\Km\nablag\zetak)_2\vert &\leq C_2(K)\frac{1}{B_0}\vert\nablag\zetak\vert^2 \\
&\leq C_2(K).
\end{align*}
\textit{- Control of $B_2$}
For the first term, we write $$(\Vd\cdot\nablag\psik,\frac{1}{\mu}dG(\epsilon\partial_t\zeta)\psi_{(k-1)}) =(\frac{\nablag}{(1+\sqrt{\mu}\D)^{1/2}}\psik,(1+\sqrt{\mu}\D)^{1/2}(\Vd\frac{1}{\mu}dG(\epsilon\partial_t)\psi_{(k-1)})$$ and we deduce with Cauchy-Schwartz inequality that this quantity is bounded in absolute value by $$\vert\Vd\vert_{H^{t_0}}\vert\B\psik\vert_2(\frac{1}{\mu}\vert dG(\epsilon\partial_t\zeta)\psi_{(k-1)}\vert_2+\mu^{-3/4}\vert dG(\epsilon\partial_t\zeta)\vert_{H^{1/2}}).$$ We now use Proposition \eqref{328b} to control this term by $C_2(K)$. The second term of $B_2$ is controlled by using Cauchy-Schwartz inequality. \\

\textit{-Control of $B_3$}
We use the control $\vert\rt\vert_{L^\infty(\R^d)}\leq C_2(K)$ and the control over $R_k$ given by \eqref{reste} to get, with Cauchy-Schwartz inequality \begin{align*}\vert( R_k,\rt\zetak)_2\vert &\leq \vert\rt\vert_{L^\infty(\R^d)} \vert R_k\vert_2\vert\zetak\vert_2 \\
&\leq C_2(K).
\end{align*}

\textit{ - Control of $B_4$}
It is a direct use of Cauchy-Schwartz inequality and Proposition \ref{328}, and the control over $R_k$ and $S_k$ given by \eqref{reste}. \\

\textit{- Control of $B_5$}
To control this term, we use a direct application of Proposition \ref{329} to write 

\begin{align*}
\vert(\Vd\cdot\nablag\psik,\frac{1}{\mu}G\psik)_2\vert &\leq M\vert\Vd\vert_{W^{1,\infty}}\vert\B\psik\vert_2^2 \\
 &\leq C_2(K).
\end{align*}

\textit{- Control of $B_6$}
We use Proposition \ref{318} with $s=0$ to get 

\begin{align*}
\vert (S_k,\frac{1}{\mu}G\psik)_2\vert &\leq \mu M_0 \vert\B S_k\vert_2\vert\B\psik\vert_2 \\
&\leq C_2(K)
\end{align*}
where we used the control over $S_k$ given by \eqref{reste} to derive the last inequality. \\

\textit{- Synthesis} We proved that $$\frac{d}{dt}(E_k+F_k) \leq C_2(K)$$ and thus we get by integrating in time: $$\forall t\in [0;T^\epsilon],\quad(E_k+F_k)(t)\leq C_2(K)t+C_0$$ where $C_0$ only depends on the initial energy. It is easy to get that $$\vert F_k\vert \leq C_2(K)\epsilon$$ and thus $$\forall t\in [0;T^\epsilon],\quad E_k(t) \leq C_2(K)(t+\epsilon)+C_0$$ Thanks to the equivalence between $E_k$ and the initial energy for this problem $\mathcal{E}^k_\sigma$, introduced for Theorem \ref{uniform_result} by \eqref{energie_theoreme},  proved in Remark \ref{energy_size}, we conclude to the desired result : 
$$\forall t\in [0;T^\epsilon],\forall 0\leq k\leq N,\quad\vert \zetak\vert_{H^1_\sigma}+\vert \B\psik\vert_2  \leq C_2(K)(t+\epsilon)+C_0$$ \hfill$\qquad \Box  $
\end{proof}
The Proposition \ref{lemma_control} gives the desired estimate of the form \eqref{desired} for time derivatives (more precisely, for $\vert \zetak\vert_{H^1_\sigma}+\vert\B\psik\vert_2$). We want to recover the same estimate for space derivatives, using directly the equation \eqref{ww_equation2}. To this purpose, one has to make precisely the link between the $H^1$ norm of $\B\psik$ and the $H^1_\sigma$ norm of $\zeta_{(k+1)}$. This is the point of the following Lemma.

\begin{lemma}\label{lemma_recover}
For all $0\leq\vert\alpha\vert\leq N$, one has:
$$\vert \mathfrak{P}\psi_{(\alpha,k)}\vert_2 + \vert \zeta_{(\alpha,k)}\vert_{H^1_\sigma} \leq C_1(K)(t+\epsilon)+C_0$$ where $C_0$ only depends on the initial energy of the unknowns, for all $0\leq k\leq N-\vert\alpha\vert$.
\end{lemma}
\begin{proof}[Proof of Lemma \ref{lemma_recover}]
The proof is done by induction on $\vert\alpha\vert$. For $\vert\alpha\vert=0$ it is the Lemma \ref{lemma_control}. \\

Assume that the result is true for $\vert\alpha\vert-1$, with $\vert\alpha\vert \geq 1$. Let $0\leq k\leq N-\vert\alpha\vert$. We start to give an evolution equation in term of unknowns $\zetaak$ and $\psiak$ :

\begin{align}
\begin{cases}
\ds \partial_t\zetaak+\Vd\cdot\nablag\zetaak-\frac{1}{\mu\epsilon}G\psiak-\frac{1}{\mu\epsilon}\sum dG(\partial^{l_1}\beta b,...,\partial^{l_i}\beta b,\partial^{m_1}\epsilon\zeta,...,\partial^{m_j}\epsilon\zeta) \partial^\delta \psi \\
\ds \partial_t\psiak+\Vd\cdot\nablag\psiak+\frac{1}{\epsilon}\rt \zetaak-\frac{1}{\epsilon B_0}\nablag\cdot\K(\sqrt{\mu}\nablag\zeta)\nablag\zetaak + \K_{(\alpha)}[\epsilon\sqrt{\mu}\nablag\zeta]\zeta_{\langle\widecheck{(\alpha,k)}\rangle}=S_k \label{quasilinear2}
\end{cases}
\end{align}
where the summation in the first equation is over the index $(i,j,l_1,...,l_j,m_1,...,m_j,\delta)$ satisfying $$1\leq i+j,\vert l_1+...+l_i+m_1+...+m_j\vert+\delta =\vert \alpha\vert+k.$$ We used the notation $$(\zeta_{\langle\widecheck{(\alpha,k)}\rangle} = (\zeta_{(\widecheck{\alpha}^1)},...,\zeta_{(\widecheck{\alpha}^d)},\zeta_{(\alpha,k-1)})$$ with $\widecheck{\alpha}^j=\alpha-e_j$, where $e_j$ denote the unit vector in the $j$-th direction of $\R^d$, and where $$\vert \B S_k\vert_2\leq C_1(K).$$ This system is very similar to the evolution equation \eqref{quasilinear} in term of unknowns $\zetak,\psik$, except that the space derivatives of the bottom $b$ appears from the derivation of $G\psi$, and the same goes for the space derivatives of $\zeta$ in the derivation of the surface tension term $\kappa_\gamma$. Now, using Proposition \ref{equivanorme}, one can write : 

\begin{align*}
\vert \B\psiak\vert_2^2+\vert\zetaak\vert_{H^1_\sigma}^2 \leq C_1(K) \frac{1}{\mu} \big(G\psiak,\psiak)_2+ (\rt \zetaak-\frac{1}{B_0}\nablag\K(\sqrt{\mu}\epsilon\nablag\zeta)\nablag\zetaak,\zetaak)_2\big)
\end{align*}
We now use the evolution equation \eqref{quasilinear2} in order to express $\psiak$ and $\zetaak$ with respect to time derivatives plus over terms of size $\epsilon$: 

\begin{align}
\vert \B\psiak\vert_2^2+\vert\zetaak\vert_{H^1_\sigma}^2 &\leq C_1(K) \bigg( \epsilon\partial_t\zetaak+\epsilon\Vd\cdot\nablag\zetaak\nonumber\\
&-\frac{1}{\mu}\sum dG(\partial^{l_1}\beta b,...,\partial^{l_i}\beta b,\partial^{m_1}\epsilon\zeta,...,\partial^{m_j}\epsilon\zeta) \partial^\delta \psi,\psiak \bigg)_2\nonumber\\
&+\bigg(-\epsilon\partial_t\psiak-\epsilon\Vd\cdot\nablag\psik-\frac{\epsilon}{B_0}\K(\alpha,k)[\sqrt{\mu}\nablag\zeta]\zeta_{\langle\widecheck{(\alpha,k)}\rangle}+S_k,\zetaak\bigg)_2. \label{recovery_calcul}
\end{align}

Let us control the first term of the r.h.s. of \eqref{recovery_calcul}. One has to express this term with respect to $\B\psiak$. To this purpose, we assume for convenience that $\alpha^1\neq 0$ (recall that $\vert\alpha\vert\geq 1)$. One computes :

\begin{align*}
\vert( \epsilon\partial_t\zetaak,\psiak)_2\vert &= (\partial^\alpha \zeta_{(k+1)},\psiak)_2\vert \\
&= \vert (\partial^{\alpha-e_1}\zeta_{(k+1)},\partial^{e_1}\psiak)_2\vert \\
&\leq (\vert\zeta_{(\alpha-e_1,k+1)}\vert,\vert\D\psiak\vert)_2 \\
&\leq \left\vert \frac{(1+\sqrt{\mu}\D)^{1/2}}{(1+\frac{1}{\sqrt{B_0}}\D)}(1+\frac{1}{\sqrt{B_0}}\D)\zeta_{(\alpha-e_1,k+1)}\right\vert_2 \left\vert \frac{\D}{(1+\sqrt{\mu}\D)^{1/2}}\psiak\right\vert_2
\end{align*}

Now, remark that $$ \frac{(1+\sqrt{\mu}\D)^{1/2}}{(1+\frac{1}{\sqrt{B_0}}\D)} \leq C(B_0\mu)$$ where $C(B_0\mu)$ is a constant that only depends on $B_0\mu$ (recall here that $0\leq \gamma \leq 1$ and the definition of $\D$ given in section \ref{notations}). It comes: 
\begin{align*}
( \epsilon\partial_t\zetaak,\psiak)_2 &\leq C_1(K) \vert \zeta_{(\alpha-e_1,k+1)}\vert_{H^1_\sigma}\vert \B\psiak\vert_2 \\
&\leq (C_1(K)(t+\epsilon)+C_0)\vert \B\psiak\vert_2
\end{align*}
where we used the induction assumption to control $\vert \zeta_{(\alpha-e_1,k+1)}\vert_{H^1_\sigma}$, since $\vert\alpha-e_1\vert \leq \vert\alpha\vert-1$. \\

For the control of the third term of the r.h.s. of \eqref{recovery_calcul}, one can prove, using Proposition \ref{328} that for $j\neq 0$, one has (see also \cite{david} Chapter 4 for details): $$\frac{1}{\mu}(dG(\partial^{l_1}\beta b,...,\partial^{l_i}\beta b,\partial^{m_1}\epsilon\zeta,...,\partial^{m_j}\epsilon\zeta) \partial^\delta \psi,\psiak)_2= \epsilon R$$ with $$R\leq C_1(K).$$ For $j=0$, we use again Proposition \ref{328} to write : 

\begin{align*}
\frac{1}{\mu}(dG(\partial^{l_1}\beta b,...,\partial^{l_i}\beta b) \partial^\delta \psi,\psiak)_2 &\leq C_1(K) \vert\partial^{l_1}\beta b\vert_{H^{t_0}}...\vert\partial^{l_i}\beta b\vert_{H^{t_0}}    \vert \B\partial^\delta\psi\vert_2\vert\B \psiak\vert_2 
\end{align*}
Now, recall that $b\in H^{N+1\vee t_0+1}$. In order to prove that this last term is controlled by $C_1(K)(t+\epsilon)+C_0$, one computes, using the definition of $\psia$ given by \eqref{defpsia}: 

\begin{align*}
\vert\B\partial^\delta\psi\vert_2 &\leq \vert \B\psi_{(\delta)}\vert_2+\epsilon\vert\B \underline{w}\zeta_{(\delta)}\vert_2 \\
&\leq \vert \B\psi_{(\delta)}\vert_2+\epsilon\vert \underline{w}\vert_{H^{t_0}}\vert \zeta_{(\delta)}\vert_{H^1}
\end{align*}
where we used the identity $\vert \B f\vert_2\leq \vert f\vert_{H^1}$. Now, the Proposition \ref{314} gives $\vert\underline{w}\vert \leq \mu^{3/4} C_1(K)\vert \B\psi\vert_{H^{t_0+3/2}}$. One can use the induction assumption, because the term $\psi_{(\delta)}$ only contains spatial derivatives of $\psi$ of order less than $\vert\alpha\vert-1$ due to the fact that $i\geq 1$ (the only term with spatial derivative of order $\vert \alpha\vert$ of $\psi$ that appears in the system \eqref{quasilinear2} is $G\psiak$). One gets :

$$\vert\B\partial^\delta\psi\vert_2 \leq C_1(K)(t+\epsilon)+C_0,$$ and finally 

$$(dG(\partial^{l_1}\beta b,...,\partial^{l_i}\beta b,\partial^{m_1}\epsilon\zeta,...,\partial^{m_j}\epsilon\zeta) \partial^\delta \psi,\psiak)_2 \leq \big(C_1(K)(t+\epsilon)+C_0\big)\vert\B\psi_{(\alpha,k)}\vert.$$

Now, we focus on the most difficult remaining term of \eqref{recovery_calcul}, which is $(-\epsilon\partial_t\psiak,\zetaak)_2$. One uses the definition of $\psiak$ given by \eqref{defpsia} $\psi_{(\alpha,k)} = \partial^\alpha(\epsilon\partial_t)^k\psi-\epsilon\underline{w}\partial^\alpha(\epsilon\partial_t)^k\zeta$ to write :

\begin{align*}
\vert(\epsilon\partial_t\psiak,\zetaak)_2 \vert&=\vert(\partial_1\psi_{(\alpha-e_1,k+1)},\zetaak)_2 + (\epsilon(\partial_1\underline{w})\zeta_{(\alpha-e_1,k+1)}-(\epsilon\partial_t)(\epsilon\underline{w})\zeta_{(\alpha,k)},\zetaak)_2\vert \\
&\leq  \vert \B \psi_{(\alpha-e_1,k+1)}\vert_2 \vert(\frac{1+\sqrt{\mu}\D)^{1/2}}{1+\frac{1}{\sqrt{B_0}}\D} (1+\frac{1}{\sqrt{B_0}}\D)\zetaak\vert_2 + \epsilon C_1(K)
\end{align*}

We now use the induction assumption to control $ \vert \B \psi_{(\alpha-e_1,k+1)}\vert_2$ by $C_1(K)(t+\epsilon)+C_0$, and the identity $$ \frac{(1+\sqrt{\mu}\D)^{1/2}}{(1+\frac{1}{\sqrt{B_0}}\D)} \leq C(B_0\mu)$$ where $C(B_0\mu)$ is a constant that only depends on $B_0\mu$. One gets:

\begin{align*}
\vert(\epsilon\partial_t\psiak,\zetaak)_2 \vert &\leq (C_1(K)(t+\epsilon)+C_0)(1+\vert \zetaak\vert_{H^1_\sigma})
\end{align*}

To conclude, we proved :

$$\vert \psiak\vert^2+\vert \zetaak\vert_{H^1_\sigma}^2 \leq (C_1(K)(t+\epsilon)+C_0)(1+\vert\psiak\vert_2+\vert \zetaak\vert_{H^1_\sigma})$$ and one can use the Young's inequality to get the desired estimate, and the Lemma \ref{lemma_recover} is true at rank $\vert\alpha\vert$.

 \hfill  $\qquad \Box $\end{proof}

According to the definition of $\mathcal{E}_\sigma^N (U)$ given by \eqref{energie_theoreme}, there is a remaining term to be controlled: it is $\vert\mathfrak{P}\psi\vert_{H^{t_0+3/2}}$. The control can be done as follows: let $r_0$ be such that $t_0+3/2\leq r_0 \leq N-1$. We have, for all $r\in\mathbb{N}^{d+1}$, $\vert r\vert = r_0$,

$$\vert\mathfrak{P}\partial^r\psi\vert_2 \leq
 \vert\mathfrak{P}\psi_{(r)}\vert_2+\epsilon\vert\mathfrak{P}\underline{w}\partial^r\zeta
\vert_2 $$
with the definition of $\psi_{(r)}$. The first term of the right hand side is controlled by previous estimates. For the second term, we use again the same technique as in the control of $A_4$ to write  
\begin{align*}\vert\mathfrak{P}\partial^r\psi\vert_2&\leq C_1(K)t+C_0+\epsilon\mu^{-1/4}\vert\underline{w}\partial^r\zeta\vert_{H^{1/2}} \\
&\leq C_1(K)t+C_0+\epsilon\mu^{-1/4}\vert\underline{w}\vert_{H^{t_0+1/2}}\vert\zeta\vert_{H^N} \\
&\leq C_1(K)t+C_0+\epsilon\mu^{-1/4} \mu^{3/4}\vert\mathfrak{P}\psi\vert_{H^{t_0+1}} \\
&\leq C_1(K)t+C_0+\epsilon C_1(K).
\end{align*}
 The result then comes from the identity $$\vert\mathfrak{P}\psi\vert_{H^{t_0+3/2}} \leq \vert\mathfrak{P}\psi\vert_{H^{r_0}} \lesssim \vert\mathfrak{P}\psi\vert_2 + \underset{\vert r\vert = r_0}{\sum}\vert\mathfrak{P}\partial^r\psi\vert_2.$$
 Finally, we prove $$\mathcal{E}^N(U)(t)\leq C_0+C_1(K)(t+\epsilon)$$ for all $t$ in $[0;T_\epsilon]$, which closes the proof of Proposition \ref{main_result}. \hfill \end{proof}$\qquad \Box $

We can now prove Theorem \ref{uniform_result} by constructing an existence time for all solutions $U^\epsilon$ of the system $\eqref{ww_equation2}$, which does not depend on $\epsilon$. 
\begin{proof}[Proof of Theorem \ref{uniform_result}]
We define $$\epsilon_0 = \frac{1}{2C_1(2C_0)}.$$ Let fix a $\epsilon \leq \epsilon_0$. Let us consider
\begin{equation*}
T_{\epsilon}^* = \underset{t>0}{\sup}\lbrace t,U^{\epsilon}\text{ exists on } \left[ 0,t\right]\text{ and } \mathcal{E}^N(U^{\epsilon})(t) \leq 2C_0,1+\epsilon\zeta(t)-\beta b\geq h_{\min}/2,\rt(t)\geq a_0/2\text{ on }\left[ 0,t\right]\rbrace.
\end{equation*}
We know that $T_{\epsilon}^*$ exists and that $U^{\epsilon}$, solution to (\ref{ww_equation1}) exists on $\left[ 0,T_{\epsilon}^*\right]$. The Proposition \ref{main_result} gives the following estimate:
\begin{equation*}
\mathcal{E}_{\alpha}(U^{\epsilon})(t) \leq C_1(K)(t+\epsilon) +C_0\quad\forall t\in\left[ 0,T_{\epsilon}^* \right].
\end{equation*}
We then consider 
\begin{equation*}
T_0 = \frac{1}{2C_1(2C_0)}\inf\lbrace1,h_{\min}\rbrace.
\end{equation*}
Let us show that $T_0\leq T_{\epsilon}^*$. Suppose $T_{\epsilon}^* < T_0$. First of all, let us prove that the condition over the height $1+\epsilon\zeta-\beta b$ is satisfied. One can write for all $0\leq t\leq T_0$\begin{align*}
(1+\epsilon\zeta-\beta b)(t) &= (1+\epsilon\zeta-\beta b)(0)+\int_0^t \partial_t (1+\epsilon\zeta-\beta b)(s)ds \\
&\geq (1+\epsilon\zeta-\beta b)(0)-t\underset{s\in[0;t]}{\sup} \vert\partial_t( 1+\epsilon\zeta-\beta b)\vert_{L^\infty(\R^d)} \\
&\geq (1+\epsilon\zeta-\beta b)(0)-tC_1(K) \\
&\geq h_{\min}-T_0C(2C_0) \\
&\geq \frac{h_{\min}}{2}.
\end{align*}One can do the same for the Rayleigh-Taylor condition $\rt(t)\geq \frac{a_0}{2}$. Now, for the energy condition, we would have for all $t\in\left[ 0,T_{\epsilon}^*\right]$:
\begin{align*}
\mathcal{E}_{\alpha}(U^{\epsilon})(t) &\leq C_0+C_1(K)(t+\epsilon)
\\ &\leq C_0 + C_1(2C_0)(T_{\epsilon}^*+\frac{1}{2C_1(2C_0)})
\\&< C_0 + C_1(2C_0)(T_0+\frac{1}{2C_1(2C_0)})
\\&< 2C_0
\end{align*}
using the monotony of $C$ and the definition of $T_{\epsilon}^*$. We can therefore continue the solution to an interval  $\left[ 0,\widetilde{T_{\epsilon}^*}\right]$ such that
 \begin{equation*} \mathcal{E}_{\alpha}(U^{\epsilon})(t) \leq 2C_0,\quad\quad\forall t\in \left[ 0,\widetilde{T_{\epsilon}^*} \right] \end{equation*} which contradicts the definition of $T_{\epsilon}^*$. We then have $T_0\leq T_{\epsilon}^*$. \\\\Conclusion: the solution $U^\epsilon$ exists on the time interval $[0;T_0]$. \end{proof}
  $\qquad \Box $ 

\subsection{Shallow Water limit}\label{shallow_section}
We discuss here the size of the existence time for solutions of the Water Waves equation \eqref{ww_equation3} when the shallowness parameter $\mu$ goes to zero. This regime corresponds to the Shallow Water model:

\begin{align}\begin{cases}\label{shallow_eq}
\partial_t\zeta+\nablag\cdot(h\Vu)=0 \\
\partial_t\Vu+\nablag\zeta+\epsilon(\Vu\cdot\nablag)\Vu = 0
\end{cases}\end{align}
where $h=1+\epsilon\zeta-\beta b$ is the height of the Water. Using the existence time given by Theorem \ref{uniform_result}, one can deduce easily the following long time existence result for the Shallow Water problem, proved in \cite{bresch_metivier}:

\begin{theorem}
Let $t_0>d/2$. Let $(\Vu_0,\zeta_0)\in H^{t_0+1}(\R^d)^{d+1}$. Then, there exists $T>0$ and a unique solution $(\Vu,\zeta)\in C([0;\frac{T}{\epsilon}];H^{t_0+1}(\R^d)^{d+1})$ to the Shallow Water equation \eqref{shallow_eq} with initial condition $(\Vu_0,\zeta_0)$, with 
$$\frac{1}{T} = C_1,\quad\text{ and } \underset{t\in [0;\frac{T}{\epsilon}]}{\sup} \vert (\zeta , \Vu)(t)\vert_{H^{t_0+1}(\R^d)^{d+1}}=C_2$$ where $C_i = C(\frac{1}{h_{\min}},\vert (\zeta_0,\Vu_0)\vert_{H^{t_0+1}(\R^d)^{d+1}})$ is a non decreasing function of its arguments.
\end{theorem}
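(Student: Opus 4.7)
The plan is to mirror the Bresch--M\'etivier strategy sketched in the introduction. The situation is substantially simpler than that of Theorem \ref{uniform_result}: there is no Dirichlet--Neumann operator, no surface tension, and no Alinhac good unknown to introduce. After the time rescaling $t' = t\epsilon$, the system becomes
\begin{align*}
\begin{cases}
\partial_t \zeta + \frac{1}{\epsilon}\nablag \cdot (h\Vu) = 0, \\
\partial_t \Vu + (\Vu \cdot \nablag)\Vu + \frac{1}{\epsilon}\nablag \zeta = 0,
\end{cases}
\end{align*}
with $h = 1 + \epsilon\zeta - \beta b$. It suffices to propagate a uniform (in $\epsilon$) bound on the energy $\mathcal{E}^N(\zeta, \Vu) = \sum_{k=0}^{N}\bigl(|\zetak|_2^2 + (h(\epsilon\partial_t)^k\Vu, (\epsilon\partial_t)^k\Vu)_2\bigr)$ with $N = t_0 + 1$, and then to run the standard continuity argument.

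The first step is to control the time derivatives. Applying $(\epsilon\partial_t)^k$ to the system, the $\frac{1}{\epsilon}$-principal block of the resulting evolution equation for $(\zetak, (\epsilon\partial_t)^k\Vu)$ is symmetrizable by the multiplier $\operatorname{diag}(1, h)$, which is positive thanks to $h \geq h_{\min}/2$. Energy estimates on the resulting symmetric hyperbolic system, together with routine commutator bounds on the subprincipal terms, give $\mathcal{E}^N(t) \leq C_0 + C_1(K)\, t$ uniformly in $\epsilon$ (the singular terms of size $\frac{1}{\epsilon}$ cancel by skew-symmetry, exactly as in the first step of the proof of Proposition \ref{main_result}).

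The second step recovers spatial regularity using the equations themselves. The second equation rewrites as $\nablag \zeta = -\epsilon\partial_t\Vu - \epsilon(\Vu\cdot\nablag)\Vu$, which controls $|\nablag\zeta|_2$ by the previous step. The first equation yields $|\nablag\cdot(h\Vu)|_2 \leq |\epsilon\partial_t\zeta|_2$. Applying $\operatorname{curl}$ to the second equation annihilates the $\frac{1}{\epsilon}$ term (since $\operatorname{curl}\nablag\zeta = 0$) and produces a transport equation for $\operatorname{curl}\Vu$ with an $O(1)$ right-hand side, so a standard $L^2$ estimate controls $|\operatorname{curl}\Vu|_2$. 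A div--curl / elliptic regularity argument, using $h \geq h_{\min}/2$, then recovers $|\Vu|_{H^1}$. Iterating this procedure $t_0 + 1$ times produces the full $H^{t_0+1}$ bound on $(\zeta, \Vu)$. The final bootstrap is identical to the one at the end of the proof of Theorem \ref{uniform_result}: choose $T_0 = 1/(2C_1(2C_0))$ so that $\mathcal{E}^N \leq 2C_0$ and $h \geq h_{\min}/2$ both propagate on $[0, T_0]$, and undo the time rescaling to obtain existence on $[0, T/\epsilon]$ in the original variable.

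The main technical obstacle I expect is the commutator analysis in the first step when $k$ is close to $N$: the symmetrizer $\operatorname{diag}(1, h)$ depends on $\zeta$, so its time derivative involves $\partial_t h = \epsilon\partial_t\zeta$, producing cross terms that must be bounded by $\mathcal{E}^N$ itself. This is exactly the reason the energy has to contain all time derivatives up to order $N$, not only spatial ones. A minor additional care is needed in the div--curl recovery with variable coefficient $h$, but this reduces to controlling $\nablag h = \epsilon\nablag\zeta - \beta\nablag b$ in $L^\infty$, which is immediate from $N \geq t_0 + 1$ and the $H^{t_0+1}$ regularity of $b$.
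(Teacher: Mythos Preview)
Your proposal is correct, but it follows a genuinely different route from the paper's proof. You carry out a direct energy argument on the Shallow Water system itself, which is precisely the Bresch--M\'etivier strategy sketched around \eqref{shallowater1} in the introduction: control time derivatives via the symmetrizer $\operatorname{diag}(1,h)$, then recover spatial regularity from the equations using the div--curl structure of $\Vu$. The paper, by contrast, does \emph{not} work on the Shallow Water system at all. It fixes $\epsilon$, sets $\mu B_0 = 1$, and invokes Theorem~\ref{uniform_result} to obtain a family of solutions $(\zeta^\mu,\psi^\mu)$ to the full Water Waves equations with surface tension on $[0,T/\epsilon]$, with bounds uniform in $\mu$. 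Using the shallow-water asymptotics $G\psi^\mu = -\mu\nablag\cdot(h^\mu\Vu^\mu)$ and $\nablag\psi^\mu = \Vu^\mu + \mu R$, it shows that $(\zeta^\mu,\Vu^\mu)$ satisfies \eqref{shallow_eq} up to an $O(\mu)$ remainder, and then passes to the limit $\mu\to 0$ by compactness.

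Your approach is more elementary and self-contained: it needs neither the Dirichlet--Neumann estimates nor the main theorem of the paper. The paper's approach, on the other hand, is chosen precisely to make the point announced in the abstract and at the end of \S\ref{statement}: that the Bresch--M\'etivier result is the $\mu\to 0$ endpoint of Theorem~\ref{uniform_result}, with the surface tension (of size $1/B_0 = \mu$) vanishing in the limit. So the two proofs serve different purposes---yours reproves the result directly, while the paper's exhibits it as a corollary of the Water Waves analysis.
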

The result of \cite{bresch_metivier} can therefore be understood as a particular endpoint of our main result. It is important to notice that the time existence provided here for the solutions of Shallow Water equations does not depend on the bathymetric parameter $\beta$.

\begin{proof}

Let us fix $\epsilon >0$ during all the proof. We also set $\mu B_0 = 1$. We recall that Theorem \ref{uniform_result} gives a solution $U^\mu = (\zeta^\mu,\psi^\mu)$ to the Water Waves equation \eqref{ww_equation3}, on a time interval $[0;\frac{T}{\epsilon}]$ with\begin{equation}\label{shallow_bound}\frac{1}{T}=C_1\quad\text{ and }\quad\underset{t\in [0;\frac{T}{\epsilon}]}{\sup} \sum_{\vert (\alpha,k)\vert\leq N} \vert \B\psi_{(\alpha,k)}^\mu\vert_2+\vert \zeta_{(\alpha,k)}^\mu\vert_2 =C_2\end{equation} where $C_i = C(\mathcal{E}_\sigma^N(U^0),\frac{1}{h_{\min}},\frac{1}{a_0},\vert b\vert_{H^{N+1\vee t_0+1}},\mu B_0)$ is a non decreasing function of its arguments. \\

We now need an asymptotic expansion of $\nablag\psi^\mu$ and $G\psi^\mu$ with respect to the vertical mean of the horizontal component of the velocity $V = \nablag\Phi$ in shallow water:

\begin{equation}\label{asymptotics}
G\psi^\mu = -\mu\nablag\cdot(h^\mu\Vu^\mu)\quad \text{ and }\quad \nablag\psi^\mu = \Vu^\mu +\mu R,
\end{equation}
with $\vert R\vert_{H^{t_0+1}} \leq C_2$ (recall that $N\geq t_0+t_0\vee 2+3/2$ with $t_0>d/2$) and $\ds \Vu^\mu = \frac{1}{h^\mu} \int_{-1+\beta b}^{\epsilon\zeta} V^\mu (z)dz$. For a complete proof of this latter result, see \cite{david} Chapter 3. Now, we take the first equation of \eqref{ww_equation3}, and the gradient of the second equation of \eqref{ww_equation3} and we replace $G\psi^\mu$ and $\nablag\psi^\mu$ by the expressions given by \eqref{asymptotics}. The surface tension term is of size $\mu$ since $\frac{1}{B_0}=\mu$. One can check that we get the following equation, satisfied in the distribution sense of $D'([0;\frac{T}{\epsilon}]\times\R^d)$:

\begin{align}\begin{cases}\label{shallow_reste}
\partial_t\zeta^\mu +\nablag\cdot(h^\mu\Vu^\mu) \\
\partial_t \Vu^\mu+\epsilon \Vu^\mu\cdot\nablag\Vu^\mu +\nablag\zeta^\mu = \mu R
\end{cases}
\end{align}
with $\vert R\vert_{H^{t_0+1}} \leq C_2$.
It is then easy to show that the sequences $(\Vu^\mu)_{\mu}$ and $(\zeta^\mu)_{\mu}$ are bounded in $W^{1,\infty}([0;\frac{T}{\epsilon}];H^{t_0+1}(\R^d))$, using the bound given by \eqref{shallow_bound}. Therefore, up to a subsequence we get the weak convergence of $(\Vu^\mu,\zeta^\mu)_{\mu}$ and $(\zeta^\mu)_{\mu}$ to an element $(\Vu,\zeta)\in C([0;\frac{T}{\epsilon}];H^{t_0+1}(\R^d)^{d+1})$. The weak convergence of the linear terms of the equation \eqref{shallow_reste} does not raise any difficulty. Since $H^{t_0+1}(\R^d)$ is embedded in $C^1(\R^d)$, we also get the convergence of the non linear terms in the equation \eqref{shallow_reste}. Finally, the limit $(\Vu,\zeta)$ satisfies the Shallow Water equations \eqref{shallow_eq} in the distribution sense of $D'([0;\frac{T}{\epsilon}]\times\R^d)$. The uniqueness is classical for this kind of symmetrizable quasi-linear hyperbolic system, and is done for instance in \cite{taylor3} Chapter XVI. $\qquad \Box $ 
\end{proof}

Let us give a qualitative explanation of this latter result. We recall that Proposition \ref{energy_size} claims that $$\frac{1}{M_0}\vert\B\psi\vert_2\leq(\psi,\frac{1}{\mu}G\psi)_2 \leq M_0 \vert\B\psi\vert_2^2$$ where $$\B = \frac{\D}{(1+\sqrt{\mu}\D)^{1/2}}$$ and therefore $\frac{1}{\mu}G$ acts like an order one operator with respect to $\psi$. The idea of the proof for Theorem \ref{uniform_result} is to get a "good" energy estimate for time derivatives: $$\frac{1}{2}\vert\partial_t\B\psi\vert_2+\frac{1}{2}\vert\partial_t\zeta\vert_2 \leq C(K)t\epsilon+C_0$$ and then use the equation to recover the same estimate for space derivatives. Using the first equation $\partial_t\zeta=\frac{1}{\mu}G\psi$, and the ellipticity of the order one operator $\frac{1}{\mu}G$  would only provide us a gain of half a space derivative for $\psi$ (we already have an estimate for $\B\psi$ where $\B$ is of order $1/2$). This is why we need surface tension which provides an additional estimate for $\frac{1}{B_0}\vert\partial_t\nablag\zeta\vert_2$, which leads to recover exactly one space derivative in the estimate of $\psi$.   In the shallow water regime, using \eqref{asymptotics}, one gets that $\frac{1}{\mu}G\psi \sim -\nablag\cdot(h\nablag\psi)$ as $\mu$ goes to zero. Therefore, when $\mu$ goes to zero, $\frac{1}{\mu}G\psi$ goes to an order one operator with respect to $\Vu = \nablag\psi$. \footnote{Another way to see it is that in the limit $\mu\rightarrow 0$, $\B$ must be seen as a first order operator ($\B\sim \D$) and the control of $\B\psi$ gives the control of a full derivative of $\psi$.} Therefore, if one has a "good" estimate for time derivatives like $$\frac{1}{2}\vert\partial_t\zeta\vert_2+\frac{1}{2}\vert\partial_t \Vu\vert_2\leq Ct\epsilon+C_0$$ with $C$ independent of $\epsilon$, one can recover, using the first equation, a "good" estimate for $\nablag\cdot(\Vu)$. Using the second equation to get the same good estimate for $\mbox{curl}\Vu$, one can recover exactly one space derivative of $\Vu$ in the estimates. And therefore, we does not need surface tension in this case. Moreover, the surface tension term of size $\frac{1}{B_0} = \mu$ vanishes as $\mu$ goes to zero.

\appendix

\section{The Dirichlet Neumann Operator}\label{appendixA}
Here are for the sake of convenience some technical results about the Dirichlet Neumann operator, and its estimates in Sobolev norms. See \cite{david} Chapter 3 for complete proofs. The first two  propositions give a control of the Dirichlet-Neumann operator. 

\begin{proposition}\label{314}
Let $t_0$>d/2, $0\leq s \leq t_0+3/2$ and $(\zeta,\beta)\in H^{t_0+1}\cap H^{s+1/2}(\mathbb{R}^d)$ such that  \begin{equation*} \exists h_0>0,\forall X\in\mathbb{R}^d,  \epsilon\zeta(X)-\beta b(X) +1 \geq h_0\end{equation*} 

(1)\quad The operator $G$ maps continuously $\overset{.}H{}^{s+1/2}(\mathbb{R}^d)$ into $H{}^{s-1/2}(\mathbb{R}^d)$ and one has 
\begin{equation*}
\vert G\psi\vert_{H^{s-1/2}} \leq \mu^{3/4} M(s+1/2) \vert\mathfrak{P}\psi\vert_{H^s},
\end{equation*}
where $M(s+1/2)$ is a constant of the form $C(\frac{1}{h_0},\vert\zeta\vert_{H^{t_0+1}},\vert b\vert_{H^{t_0+1}},\vert\zeta\vert_{H^{s+1/2}},\vert b\vert_{H^{s+1/2}})$.
\\

(2)\quad The operator $G$ maps continuously $\overset{.}H{}^{s+1}(\mathbb{R}^d)$ into $H{}^{s-1/2}(\mathbb{R}^d)$ and one has 
\begin{equation*}
\vert G\psi\vert_{H^{s-1/2}} \leq \mu M(s+1) \vert\mathfrak{P}\psi\vert_{H^s+1/2},
\end{equation*}
where $M(s+1)$ is a constant of the form $C(\frac{1}{h_0},\vert\zeta\vert_{H^{t_0+1}},\vert b\vert_{H^{t_0+1}},\vert\zeta\vert_{H^{s+1}},\vert b\vert_{H^{s+1}})$.
\\

Moreover, it is possible to replace $G$ by $\underline{w}$ in the previous result, where $\underline{w} = \frac{G\psi+\epsilon\mu\nablag\zeta\cdot\nablag\psi}{1+\epsilon^2\mu\vert\nablag\zeta\vert^2}$(vertical component of the velocity $U=\nabla_{X,z}\Phi$ at the surface).
\end{proposition}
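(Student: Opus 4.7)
The plan is to reduce everything to the elliptic problem on the flat strip $\mathcal{S} = \mathbb{R}^d \times (-1,0)$ via the diffeomorphism $\Sigma_t^\epsilon$, since the potential $\phi = \Phi \circ \Sigma_t^\epsilon$ satisfies the variational equation $\nabla^{\mu,\gamma}\cdot P(\Sigma_t^\epsilon)\nabla^{\mu,\gamma}\phi = 0$ with Dirichlet data $\psi$ at $z=0$ and Neumann data $0$ at $z=-1$. The key observation is that in this straightened setting, $G\psi$ is expressed as a trace at $z=0$ of a combination of $\partial_z\phi$ and $\nabla^\gamma\phi$, so by the trace theorem on the strip (controlling $H^{s-1/2}(\mathbb{R}^d)$ traces by $H^{s,1}$-type norms on $\mathcal{S}$), it suffices to obtain Sobolev estimates for $\nabla^{\mu,\gamma}\phi$ on $\mathcal{S}$ in terms of $|\mathfrak{P}\psi|_{H^s}$, with careful tracking of the $\mu$ dependence.

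First I would establish the base case $s=0$. Construct an explicit lift $\psi^\dagger$ of $\psi$ on the flat strip that is essentially the Poisson extension in the variable $\sqrt{\mu}|D^\gamma|z$, i.e.\ something like $\psi^\dagger(X,z) = \chi(z\sqrt{\mu}|D^\gamma|)\psi(X)$ with $\chi(0)=1$, tailored so that $\|\nabla^{\mu,\gamma}\psi^\dagger\|_{L^2(\mathcal{S})}^2 \lesssim \mu^{1/2}|\mathfrak{P}\psi|_2^2$; this bound is precisely why the operator $\mathfrak{P}$ is the relevant weight (it is tuned to the behavior of $\sqrt{\mu}|D^\gamma|\tanh(\sqrt{\mu}|D^\gamma|)$). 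Coercivity of the bilinear form $(P(\Sigma)\nabla^{\mu,\gamma}\cdot,\nabla^{\mu,\gamma}\cdot)_{L^2(\mathcal{S})}$ — which follows from the uniform positivity of $\det J_{\Sigma_t^\epsilon}$ (requiring $h_0>0$) and control of the $W^{1,\infty}$ norm of $\Sigma_t^\epsilon$ (hence of $|\zeta|_{H^{t_0+1}}, |b|_{H^{t_0+1}}$) — applied to $\phi - \psi^\dagger$ then yields $\|\nabla^{\mu,\gamma}\phi\|_{L^2(\mathcal{S})} \lesssim \mu^{1/4}|\mathfrak{P}\psi|_2$.

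For $s>0$, I would iterate: apply $\Lambda^s$ to the equation, commute it past the variable-coefficient matrix $P(\Sigma_t^\epsilon)$, and obtain a perturbed elliptic problem whose right-hand side involves commutator terms $[\Lambda^s,P(\Sigma_t^\epsilon)]\nabla^{\mu,\gamma}\phi$. Standard Kato--Ponce type commutator and product estimates bound these in $L^2(\mathcal{S})$, picking up factors of $|\zeta|_{H^{s+1/2}}+|b|_{H^{s+1/2}}$ (for statement (1)) or $|\zeta|_{H^{s+1}}+|b|_{H^{s+1}}$ (for statement (2)), with the lower-order Sobolev norms $|\zeta|_{H^{t_0+1}}+|b|_{H^{t_0+1}}$ absorbed into the $L^\infty$-type factors. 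A finite induction on $s$ (since $s\leq t_0+3/2$ is capped) then produces $\|\Lambda^s\nabla^{\mu,\gamma}\phi\|_{L^2(\mathcal{S})} \lesssim \mu^{1/4}M(s+1/2)|\mathfrak{P}\psi|_{H^s}$, and using the equation to convert $\partial_z^2\phi$ into horizontal derivatives gives control of $\|\Lambda^{s-1}\partial_z\nabla^{\mu,\gamma}\phi\|_{L^2(\mathcal{S})}$ by the same quantity.

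Finally, the trace theorem on the strip evaluates $\nabla^{\mu,\gamma}\phi$ at $z=0$ in $H^{s-1/2}(\mathbb{R}^d)$, and the identity $G\psi = (\partial_z\phi - \mu\nabla^\gamma(\epsilon\zeta)\cdot\nabla^\gamma\phi)|_{z=0}$ converts this into the desired bound on $G\psi$; the extra $\mu^{1/2}$ in front of $\nabla^{\mu,\gamma}$ combined with the $\mu^{1/4}$ from the energy estimate yields the claimed $\mu^{3/4}$ prefactor for (1). For (2), the same argument with $\mathfrak{P}\psi \in H^{s+1/2}$ (rather than $H^s$) allows one to keep one extra horizontal derivative, essentially treating $\mathfrak{P}$ as an order-$1$ operator at low frequency, which trades $\mu^{-1/4}$ for an additional $\mu^{1/4}$ and produces the $\mu$ prefactor. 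The replacement of $G$ by $\underline{w}$ follows from the formula $\underline{w}=(G\psi+\epsilon\mu\nabla^\gamma\zeta\cdot\nabla^\gamma\psi)/(1+\epsilon^2\mu|\nabla^\gamma\zeta|^2)$ combined with a direct estimate of $(\partial_z\phi)|_{z=0}$, which obeys the same bound as the one we derived for $\partial_z\phi$ alone. The main obstacle will be the careful bookkeeping of $\mu$ powers through the commutator arguments, since $\mathfrak{P}$ has a different order at low and high frequencies and $\nabla^{\mu,\gamma}$ mixes $\sqrt{\mu}$-weighted horizontal derivatives with unweighted vertical ones.
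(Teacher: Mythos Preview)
The paper does not actually prove this proposition: it is stated in Appendix~A as a technical result, with the sentence ``See \cite{david} Chapter 3 for complete proofs.'' Your outline is essentially the strategy used in that reference (Lannes's monograph): straighten to the flat strip, build an explicit lift $\psi^\dagger$ adapted to $\sqrt{\mu}|D^\gamma|$ so that $\|\nabla^{\mu,\gamma}\psi^\dagger\|_{L^2(\mathcal{S})}\lesssim\mu^{1/4}|\mathfrak{P}\psi|_2$, use coercivity of $P(\Sigma_t^\epsilon)$ (guaranteed by the nonvanishing-depth condition and $H^{t_0+1}$ control of $\zeta,b$) to get the base elliptic estimate, commute $\Lambda^s$ through the coefficients with tame product/commutator bounds for the higher-order estimates, and finish with the trace theorem on the strip. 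So there is nothing to compare against in the present paper, and your plan is in line with the source it cites.

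One small correction on the $\mu$-bookkeeping in your final paragraph: the extra $\sqrt{\mu}$ does not come from ``$\mu^{1/2}$ in front of $\nabla^{\mu,\gamma}$'' acting on the $\partial_z\phi$ component (that component carries no $\mu$). Rather, $G\psi$ coincides with $e_z\cdot P(\Sigma_t^\epsilon)\nabla^{\mu,\gamma}\phi|_{z=0}$, and the bottom row of $P(\Sigma_t^\epsilon)$ carries a $\sqrt{\mu}$ weight coming from the anisotropic scaling of $J_{\Sigma_t^\epsilon}^{-1}$; combined with the $\mu^{1/4}$ energy bound this yields $\mu^{3/4}$. For part~(2) the additional $\mu^{1/4}$ is gained by writing $|\mathfrak{P}\psi|_{H^s}\leq\mu^{-1/4}|\mathfrak{P}\psi|_{H^{s+1/2}}$ is not quite right either; instead one uses the refined estimate $\|\Lambda^s\nabla^{\mu,\gamma}\phi\|_{L^2(\mathcal{S})}\lesssim\mu^{1/2}|\mathfrak{P}\psi|_{H^{s+1/2}}$, obtained by choosing the lift so that an extra half-derivative on $\psi$ buys an extra $\mu^{1/4}$ in the strip norm. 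Apart from this detail your sketch is sound.
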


\begin{proposition}\label{318}
Let $t_0>d/2$, and $0\leq s\leq t_0+1/2$. Let also $\zeta,b\in H^{t_0+1}(\mathbb{R}^d)$ be such that  $$\exists h_0>0, \forall X\in\mathbb{R}^d, 1+\epsilon\zeta(X)-\beta b(X) \geq h_0$$ Then, for all $\psi_1$, $\psi_2\in \overset{.}H{}^{s+1/2}(\mathbb{R}^d)$, we have  $$(\Lambda^sG\psi_1,\Lambda^s\psi_2)_2 \leq\mu M_0 \vert \mathfrak{P}\psi_1\vert_{H^s}\vert \mathfrak{P}\psi_2\vert_{H^s},$$
where $M_0$ is a constant of the form $C(\frac{1}{h_0},\vert\zeta\vert_{H^{t_0+1}},\vert b\vert_{H^{t_0+1}})$.
\end{proposition}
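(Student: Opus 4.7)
The plan is to reduce this bilinear bound to the diagonal (scalar) energy estimate of the Dirichlet--Neumann operator, in the spirit of the argument detailed in \cite{david}.

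First, pass to the flat strip via the diffeomorphism $\Sigma_t^\epsilon$. Letting $\phi_i$ denote the solutions of the transformed Laplace problem \eqref{dirichletneumann} with Dirichlet trace $\psi_i$, Green's identity furnishes the symmetric variational representation
\[
(G\psi_1,\psi_2)_2=\int_{\mathcal S}\nabla^{\mu,\gamma}\phi_1\cdot P(\Sigma_t^\epsilon)\nabla^{\mu,\gamma}\phi_2\,dXdz,
\]
which in particular shows that $G$ is self-adjoint and nonnegative on $\dot H^{1/2}(\mathbb R^d)$. Combining Cauchy--Schwarz in the strip with the coercivity of $P(\Sigma_t^\epsilon)$ (guaranteed by the nonvanishing height assumption) and the energy estimate $\|\nabla^{\mu,\gamma}\phi\|_{L^2(\mathcal S)}^2\leq M_0\mu\,(G\psi,\psi)_2$ already invoked in the proof of Lemma \ref{energy_size}, one obtains the $s=0$ bound
\[
(G\psi_1,\psi_2)_2\leq \mu M_0|\mathfrak P\psi_1|_2|\mathfrak P\psi_2|_2.
\]

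For general $s$ in the allowed range $0\leq s\leq t_0+1/2$, I would split
\[
(\Lambda^sG\psi_1,\Lambda^s\psi_2)_2=(G\Lambda^s\psi_1,\Lambda^s\psi_2)_2+([\Lambda^s,G]\psi_1,\Lambda^s\psi_2)_2.
\]
The first term is immediately controlled by applying the $s=0$ bound to the pair $(\Lambda^s\psi_1,\Lambda^s\psi_2)$ and using that the Fourier multipliers $\Lambda^s$ and $\mathfrak P$ commute; this produces exactly $\mu M_0|\mathfrak P\psi_1|_{H^s}|\mathfrak P\psi_2|_{H^s}$. The whole content of the proposition therefore reduces to a tame commutator estimate
\[
\bigl|([\Lambda^s,G]\psi_1,\Lambda^s\psi_2)_2\bigr|\lesssim \mu M_0|\mathfrak P\psi_1|_{H^s}|\mathfrak P\psi_2|_{H^s}.
\]
I would establish this by first treating the flat reference operator $G_{\mu,\gamma}[0,0]$ (a pure Fourier multiplier, whose commutator with $\Lambda^s$ vanishes) and then perturbing via the shape-derivative identity $dG(h)\psi=-\epsilon G(h\underline w)-\epsilon\mu\nablag\cdot(h\underline V)$ recalled in Theorem \ref{321}. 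Together with Bony's paraproduct decomposition, this expresses $[\Lambda^s,G]$ as a sum of operators with coefficients built from $\zeta,b$ and from the harmonic extension $\phi_1$; each piece is then bounded by Kato--Ponce-type tame commutator estimates, the restriction $s\leq t_0+1/2$ being exactly what allows every derivative landing on the coefficients $\zeta,b$ to be absorbed in the $H^{t_0+1}$ norms present in $M_0$.

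The main obstacle will be tracking the powers of $\mu$ throughout the commutator analysis so as to recover the full factor $\mu$ stated on the right-hand side. A naive application of Proposition \ref{314}(1) would lose a factor of $\mu^{1/4}$; it is essential to rely on part (2) of that proposition, which treats $G$ as a genuine order-$1$ operator at the cost of a half extra space derivative on $\psi$---the slack being exactly the half-order smoothing built into the denominator $(1+\sqrt\mu|D^\gamma|)^{1/2}$ of $\mathfrak P$. This interplay between the two Sobolev readings of $G$ is what makes the estimate quantitatively sharp in $\mu$ and matches the conclusion of the proposition.
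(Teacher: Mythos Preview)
The paper does not give its own proof of Proposition~\ref{318}; the appendix simply states it and refers to \cite{david}, Chapter~3. So the comparison is with the argument there.

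Your $s=0$ step is correct and is exactly the standard one: the variational identity on the strip, Cauchy--Schwarz for the symmetric form $\int_{\mathcal S}P(\Sigma_t^\epsilon)\nabla^{\mu,\gamma}\phi_1\cdot\nabla^{\mu,\gamma}\phi_2$, and the equivalence $(G\psi,\psi)_2\sim\mu|\mathfrak P\psi|_2^2$.

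For general $s$ your decomposition $(\Lambda^sG\psi_1,\Lambda^s\psi_2)_2=(G\Lambda^s\psi_1,\Lambda^s\psi_2)_2+([\Lambda^s,G]\psi_1,\Lambda^s\psi_2)_2$ is fine, and the first piece is indeed handled by the $s=0$ bound. The gap is in your treatment of the commutator. You propose to write $G=G[0,0]+(G-G[0,0])$ and represent the difference through shape derivatives via Theorem~\ref{321}. But that formula reads $dG(h)\psi=-\epsilon G(h\underline w)-\epsilon\mu\nablag\cdot(h\underline V)$: the right-hand side contains $G$ again, applied to a product. When you commute $\Lambda^s$ through, the first contribution produces a term $[\Lambda^s,G](h\underline w)$, and you are back to the very commutator you are trying to estimate; as written the argument is circular. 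A second, smaller issue is that Theorem~\ref{321} only records the derivative with respect to the surface $\zeta$; the perturbation from $G[0,0]$ to $G[\epsilon\zeta,\beta b]$ also involves the bottom, for which no explicit boundary formula is given here.

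The proof in \cite{david} avoids this by staying inside the strip rather than on the boundary. Using that $\Lambda^s$ acts only in $X$, commutes with $\nabla^{\mu,\gamma}$, and that $\Lambda^s\phi_2$ is an admissible extension of $\Lambda^s\psi_2$, one writes
\[
(\Lambda^sG\psi_1,\Lambda^s\psi_2)_2=\int_{\mathcal S}\Lambda^s\bigl(P(\Sigma_t^\epsilon)\nabla^{\mu,\gamma}\phi_1\bigr)\cdot\nabla^{\mu,\gamma}\Lambda^s\phi_2\,dXdz,
\]
and then commutes $\Lambda^s$ with the \emph{coefficient matrix} $P(\Sigma_t^\epsilon)$, which is an explicit algebraic expression in $\zeta,b$ and their first derivatives. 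That commutator is a genuine product-type (Kato--Ponce) term with no hidden $G$, and it is closed by the higher-order elliptic estimate $\|\Lambda^s\nabla^{\mu,\gamma}\phi\|_{L^2(\mathcal S)}^2\leq \mu M_0|\mathfrak P\psi|_{H^s}^2$. In this route the factor $\mu$ comes directly from the anisotropic scaling of $\nabla^{\mu,\gamma}$ and the elliptic estimate, not from juggling the two parts of Proposition~\ref{314} as you suggest in your last paragraph.
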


The second result gives a control of the shape derivatives of the Dirichlet-Neumann operator. More precisely, we define the  open set  $\bold{\Gamma}\subset H^{t_0+1}(\mathbb{R}^d)^2$ as:\\
$$\bold{\Gamma} =\lbrace \Gamma=(\zeta,b)\in H^{t_0+1}(\mathbb{R}^d)^2,\quad \exists h_0>0,\forall X\in\mathbb{R}^d, \epsilon\zeta(X) +1-\beta b(X) \geq h_0\rbrace$$ and, given a $\psi\in \overset{.}H{}^{s+1/2}(\mathbb{R}^d)$, the mapping: \begin{equation}\label{mapping}G[\epsilon\cdot,\beta\cdot] : \left. \begin{array}{rcl}
&\bold{\Gamma} &\longrightarrow H^{s-1/2}(\mathbb{R}^d) \\
&\Gamma=(\zeta,b) &\longmapsto G[\epsilon\zeta,\beta b]\psi.
\end{array}\right.\end{equation} We can prove the differentiability of this mapping. The following Theorem gives a very important explicit formula for the first-order partial derivative of $G$ with respect to $\zeta$:

\begin{theorem}\label{321}
Let $t_0>d/2$. Let $\Gamma = (\zeta,b)\in \bold{\Gamma}$ and $\psi\in\overset{.}H{}^{3/2}(\R^d)$. Then, for all $h\in H^{t_0+1}(\R^d)$, one has $$dG(h)\psi = -\epsilon G(h\underline{w})-\epsilon\mu\nablag\cdot(h\Vd),$$ with $$\underline{w}=\frac{G\psi+\epsilon\mu\nablag\zeta\cdot\nablag\psi}{1+\epsilon^2\mu\modd{\nablag\zeta}},\quad\text{ and }\quad \Vd = \nablag\psi-\epsilon\underline{w}\nablag\zeta.$$
\end{theorem}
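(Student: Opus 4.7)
The plan is to compute the shape derivative directly by perturbing the surface into $\zeta+sh$ and tracking the $s$-variation of the harmonic extension $\Phi$. Let $\Phi^s$ denote the solution of the Laplace boundary value problem with surface at $z=\epsilon(\zeta+sh)$, Dirichlet datum $\psi$ on top and homogeneous Neumann at $z=-1+\beta b$, and set $\dot\Phi = \partial_s \Phi^s|_{s=0}$, viewed as a function on the unperturbed fluid domain. Differentiating the interior equation in $s$ gives $\Delta^{\mu,\gamma}\dot\Phi=0$, the bottom Neumann condition is preserved (the bottom is fixed), and differentiating the Dirichlet condition $\Phi^s(X,\epsilon(\zeta+sh)(X))=\psi(X)$ at $s=0$ yields
\[ \dot\Phi|_{z=\epsilon\zeta} \;=\; -\epsilon h\,\partial_z\Phi|_{z=\epsilon\zeta} \;=\; -\epsilon h\,\underline{w}. \]
Consequently $\dot\Phi$ is the harmonic extension of $-\epsilon h\,\underline{w}$, and one reads off
\[ (\partial_z\dot\Phi - \mu\epsilon\nablag\zeta\cdot\nablag\dot\Phi)|_{z=\epsilon\zeta} \;=\; G(-\epsilon h\,\underline{w}) \;=\; -\epsilon\, G(h\,\underline{w}). \]

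I would then differentiate the explicit formula
\[ G[\epsilon(\zeta+sh),\beta b]\psi \;=\; \bigl(\partial_z\Phi^s-\mu\epsilon\nablag(\zeta+sh)\cdot\nablag\Phi^s\bigr)\bigl|_{z=\epsilon(\zeta+sh)} \]
at $s=0$, applying the chain rule carefully to handle the moving evaluation height. Three contributions appear: the interior $s$-derivative yields the term $-\epsilon G(h\,\underline{w})$ identified above; the explicit dependence of $\nablag(\zeta+sh)$ on $h$ gives $-\mu\epsilon\,\nablag h\cdot\Vd$; and the displacement of the evaluation point contributes the trace corrections $\epsilon h\,\partial_z^2\Phi|_{z=\epsilon\zeta}$ and $-\mu\epsilon^2 h\,\nablag\zeta\cdot\partial_z\nablag\Phi|_{z=\epsilon\zeta}$. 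Using $\Delta^{\mu,\gamma}\Phi=0$ to substitute $\partial_z^2\Phi = -\mu\Delta^\gamma\Phi$, I would regroup the last three terms as
\[ -\mu\epsilon\bigl[\,\nablag h\cdot\Vd + h\,\Delta^\gamma\Phi|_{z=\epsilon\zeta} + \epsilon h\,\nablag\zeta\cdot\partial_z\nablag\Phi|_{z=\epsilon\zeta}\,\bigr]. \]
A short chain-rule computation of $\nablag\cdot\Vd(X)=\nablag\cdot\bigl[(\nablag\Phi)(X,\epsilon\zeta(X))\bigr] = \Delta^\gamma\Phi|_{z=\epsilon\zeta}+\epsilon\nablag\zeta\cdot\partial_z\nablag\Phi|_{z=\epsilon\zeta}$ shows that the bracket is precisely $\nablag\cdot(h\Vd)$, yielding the claimed identity $dG(h)\psi=-\epsilon G(h\,\underline{w})-\mu\epsilon\,\nablag\cdot(h\Vd)$.

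The main technical obstacle is to make the heuristic object $\dot\Phi=\partial_s\Phi^s|_{s=0}$ rigorous, since the fluid domains $\{-1+\beta b<z<\epsilon(\zeta+sh)\}$ vary with $s$. I would circumvent this by introducing a one-parameter family of diffeomorphisms $\Sigma_t^{\epsilon,s}$, obtained from $\Sigma_t^\epsilon$ by substituting $\zeta+sh$ for $\zeta$, and working with $\phi^s=\Phi^s\circ\Sigma_t^{\epsilon,s}$ on the fixed strip $\mathcal{S}$. Then $\phi^s$ solves the flattened elliptic problem \eqref{dirichletneumann} with coefficients $P(\Sigma_t^{\epsilon,s})$ depending smoothly on $s$ through the entries of the Jacobian; the classical variational theory for strongly elliptic systems (see \cite{david} Chapter 2) then yields $C^1$ dependence $s\mapsto\phi^s$ in $\dot H^1(\mathcal{S})$, and transporting back gives the smooth dependence of $s\mapsto G[\epsilon(\zeta+sh),\beta b]\psi$ along with the variational interpretation of the surface trace $\dot\Phi|_{z=\epsilon\zeta}=-\epsilon h\,\underline{w}$. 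The regularity assumptions $(\zeta,b)\in H^{t_0+1}(\mathbb{R}^d)^2$, $\psi\in\dot H^{3/2}(\mathbb{R}^d)$ and $h\in H^{t_0+1}(\mathbb{R}^d)$ are exactly what is needed for the trace theorem and for $h\,\underline{w}$, $h\Vd$ to sit in the function spaces that make $G(h\,\underline{w})$ and $\nablag\cdot(h\Vd)$ well defined.
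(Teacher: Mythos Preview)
Your argument is correct and is precisely the standard derivation of this shape-derivative formula: perturb the free surface, identify the trace $\dot\Phi|_{z=\epsilon\zeta}=-\epsilon h\,\underline{w}$ from the Dirichlet condition, and regroup the remaining chain-rule terms using $\partial_z^2\Phi=-\mu\Delta^\gamma\Phi$ and $\nablag\cdot\Vd=\Delta^\gamma\Phi|_{z=\epsilon\zeta}+\epsilon\nablag\zeta\cdot\partial_z\nablag\Phi|_{z=\epsilon\zeta}$. The paper itself does not prove this statement---it is listed in Appendix~\ref{appendixA} with the note ``See \cite{david} Chapter 3 for complete proofs''---and the argument given there is exactly the one you outline, including the passage to the flattened strip via $\Sigma_t^\epsilon$ to make the differentiation in $s$ rigorous.
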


The following result gives estimates of the derivatives of the mapping \eqref{mapping}. 
\begin{proposition}\label{328}
Let $t_0$>d/2, $0\leq s \leq t_0+1/2$ and $(\zeta,\beta)\in H^{t_0+1}(\mathbb{R}^d)$ such that: \begin{equation*} \exists h_0>0,\forall X\in\mathbb{R}^d,  \epsilon\zeta(X)-\beta b(X) +1 \geq h_0\end{equation*} 
Then, for all $\psi_1,\psi_2\in \overset{.}H{}^{s+1/2}(\mathbb{R}^d)$, for all $(h,k)\in H^{t_0+1}(\mathbb{R}^d)$ one has  
\begin{equation*}
\vert (\Lambda^s d^j G(h,k)\psi_1,\Lambda^s\psi_2)\vert \leq \mu M_0 \prod_{m=1}^j \vert(\epsilon h_m,\beta k_m)\vert_{H^{t_0+1}}\vert\mathfrak{P}\psi_1\vert_s\vert\mathfrak{P}\psi_2\vert_s,
\end{equation*}
where $M_0$ is a constant of the form $C(\frac{1}{h_0},\vert\zeta\vert_{H^{t_0+1}},\vert b\vert_{H^{t_0+1}})$.
\end{proposition}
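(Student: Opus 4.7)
The plan is to prove this by induction on the order $j$ of the shape derivative. The base case $j=0$ is precisely Proposition \ref{318}. For the inductive step, I would exploit the explicit formula of Theorem \ref{321}, namely
$$dG(h_1)\Psi = -\epsilon G(h_1\wsurf) - \epsilon\mu\nablag\cdot(h_1\Vd),$$
together with its analogue for the $b$-derivative (obtained by the same variational argument on the fixed strip $\mathcal{S}$). Differentiating this identity $j$ further times in $(\zeta,b)$ via Leibniz expresses $d^{j+1}G(h,k)\Psi$ as a finite sum of terms of two types: $G$-type terms $\epsilon G(\cdots)$ whose arguments are multilinear in the $h_m,k_m$ and in lower-order shape derivatives of $\wsurf,\Vd$, and divergence-form terms $\epsilon\mu\nablag\cdot(\cdots)$ of the same multilinear structure.

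The inductive estimate would then be carried out in two strokes. For the $G$-type terms, one pairs with $\Lambda^s\psi_2$ and invokes Proposition \ref{318} to pull out the single $\mu M_0$ factor together with $\vert\B\psi_2\vert_{H^s}$. The remaining argument is handled by tame Sobolev product estimates $\vert fg\vert_{H^s}\lesssim \vert f\vert_{H^{t_0+1}}\vert g\vert_{H^s}$, valid for $0\leq s\leq t_0+1/2$ thanks to $t_0>d/2$, together with Proposition \ref{314} applied to $\wsurf,\Vd$ to produce the $\vert\B\psi_1\vert_{H^s}$ factor. For the divergence-form terms, I would integrate by parts against $\Lambda^s\psi_2$ to move $\nablag$ onto $\psi_2$; the horizontal derivative so generated is absorbed into $\vert\B\psi_2\vert_{H^s}$ via the bound $\vert\nablag f\vert_{H^{s-1/2}}\lesssim \vert\B f\vert_{H^s}$ that follows directly from the definition \eqref{defp} of $\B$. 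In both cases each explicit differentiation of $G$ contributes one $\epsilon h_m$ or $\beta k_m$ factor, and these accumulate into the product in the statement; the single overall $\mu$ comes once and for all from either the base Proposition \ref{318} or the $\epsilon\mu$ prefactor in front of the divergence.

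The main obstacle I anticipate is the joint recursive nature of the scheme. Since $\wsurf$ and $\Vd$ depend on $(\zeta,b)$ through $G$ and through the potential $\Phi$ itself, each time they are differentiated in $(\zeta,b)$ one generates shape derivatives $d^l G$ or $d^l \Phi$ of strictly lower order $l<j+1$. The induction must therefore be run simultaneously for the bilinear estimate of $d^j G$ claimed in the statement and for the Sobolev control of the shape derivatives of the trace velocities $(\wsurf,\Vd)$ on the anisotropic scale $H^{s,k}((-1,0)\times\mathbb{R}^d)$ of the paper. Once that joint hypothesis is in place, the remaining bookkeeping is combinatorial: each sub-term produced by the Leibniz expansion matches one of finitely many patterns, each of which is estimated by the scheme above. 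The restriction $s\leq t_0+1/2$ is precisely what makes the tame product estimates close, and explains why the statement cannot be pushed to higher regularity without additional hypotheses on $(\zeta,b)$.
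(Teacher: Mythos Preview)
The paper does not actually prove this proposition. It is stated in Appendix~A among several technical results on the Dirichlet--Neumann operator, with the explicit disclaimer ``See \cite{david} Chapter~3 for complete proofs.'' So there is no in-paper argument to compare your proposal against.

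That said, your strategy is essentially the one carried out in the cited reference: induction on $j$ with base case Proposition~\ref{318}, and the inductive step driven by differentiating the explicit first-derivative formula of Theorem~\ref{321} via Leibniz, then closing with product estimates and Proposition~\ref{314} for the traces $\wsurf,\Vd$. Your remark that the induction must be run \emph{jointly} for the bilinear estimate on $d^jG$ and for Sobolev control of the shape derivatives of the potential (equivalently of $\wsurf,\Vd$) is exactly the main technical point.

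One caveat worth flagging: Theorem~\ref{321} only gives the shape derivative in the \emph{surface} direction $\zeta$. The bottom derivative $d_bG$ does not admit such a clean trace formula, and in \cite{david} it is handled instead by differentiating the variational problem \eqref{dirichletneumann} on the fixed strip and estimating the resulting right-hand side in $H^{s,k}$. Your parenthetical ``obtained by the same variational argument on the fixed strip $\mathcal{S}$'' gestures at this, but the structure of that step is genuinely different from the $\zeta$ case: it requires elliptic estimates on the full potential $\Phi$ and its shape derivatives inside the strip, not just on the boundary traces. This is not a gap in your plan so much as a place where the amount of work hidden behind one sentence is substantial.
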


The following Proposition gives the same type of estimate that the previous one:

\begin{proposition}\label{328b}
Let $t_0>d/2$ and $(\zeta,b)\in H^{t_0+1}$ be such that \begin{equation*} \exists h_0>0,\forall X\in\mathbb{R}^d,  \epsilon\zeta(X)-\beta b(X) +1 \geq h_0.\end{equation*} Then, for all $0\leq s\leq t_0+1/2$, $$\vert d^j G(h,k)\psi\vert_{H^{s-1/2}} \leq M_0 \mu^{3/4} \prod_{m=1}^j \vert (\epsilon h_m,\beta k_m)\vert_{H^{t_0+1}} \vert \B\psi\vert_{H^s}$$
\end{proposition}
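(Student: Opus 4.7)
The plan is to proceed by induction on $j$, using the explicit first-order shape derivative formula of Theorem \ref{321} as both the base case and the engine that drives the inductive step. At each level the strategy is the same: reduce a $j$-th shape derivative of $G$ to an expression involving $G$ (or lower-order shape derivatives) applied to suitable products of $h_m,\,k_m,\,\wsurf,\,\Vd$ and $\psi$, and then invoke Proposition \ref{314} (point 1) — which is precisely what supplies the sharp $\mu^{3/4}$ factor that appears in the statement.

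For the base case $j=1$, Theorem \ref{321} gives
\begin{equation*}
dG(h,k)\psi = -\epsilon G(h\wsurf) - \epsilon\mu\nablag\cdot(h\Vd) + (\text{analogous terms in }k),
\end{equation*}
where the $k$-contribution comes from differentiating the domain through the bottom $b$ (with a $\beta$ in place of $\epsilon$). I estimate the first term by Proposition \ref{314} (1):
$\vert G(h\wsurf)\vert_{H^{s-1/2}} \le \mu^{3/4} M(s+1/2)\,\vert \B(h\wsurf)\vert_{H^s}$,
then use a standard Sobolev product estimate together with $H^{t_0+1}\hookrightarrow L^\infty$ to pass to $\vert h\vert_{H^{t_0+1}}\vert\B\wsurf\vert_{H^s}$, and finally apply the remark at the end of Proposition \ref{314} (which allows $G$ to be replaced by $\wsurf$) to bound $\vert\B\wsurf\vert_{H^s}$ by $\mu^{3/4}\vert\B\psi\vert_{H^s}$… wait — in fact the cleaner route is to observe that $h\wsurf$ already satisfies $\vert h\wsurf\vert_{\dot H^{s+1/2}}\lesssim \vert h\vert_{H^{t_0+1}}\vert\B\psi\vert_{H^s}$ directly, by the cited remark on $\wsurf$. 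The second piece $\epsilon\mu\nablag\cdot(h\Vd)$ is bounded in $H^{s-1/2}$ by $\epsilon\mu\vert h\vert_{H^{t_0+1}}\vert\Vd\vert_{H^{s+1/2}}$; writing $\Vd=\nablag\psi-\epsilon\wsurf\nablag\zeta$ and using the identity $\vert\nablag\psi\vert_{H^{s-1/2}}\lesssim\mu^{-1/4}\vert\B\psi\vert_{H^s}$ (which follows from the very definition \eqref{defp} of $\B$, namely $\B(1+\sqrt{\mu}\D)^{1/2}=\D$, by comparing high- and low-frequency behaviour), the factor $\mu\cdot\mu^{-1/4}=\mu^{3/4}$ falls out with the right power. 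Adding the analogous $k$-terms (with the $\beta$ factor instead of $\epsilon$) completes the $j=1$ case.

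For the inductive step, I differentiate the formula above once more. This generates three kinds of terms: (a) $d^{j-1}G(k_2,\ldots,k_j)$ applied to $h\wsurf$ or similar products, which are covered by the induction hypothesis once Sobolev product estimates are applied; (b) $G$ (or $\nablag\cdot$) applied to products involving the shape derivatives of $\wsurf$ and $\Vd$; (c) boundary/bottom terms arising from differentiating in $b$. The shape derivatives of $\wsurf$ and $\Vd$ are computed from their explicit expressions in terms of $G\psi$, $\nablag\zeta$ and $\nablag\psi$ (displayed just after Theorem \ref{321}); each such derivative produces a factor $dG(\cdot)\psi$ plus algebraic terms in $\nablag k_m$ or $\nablag h_m$ that are harmless thanks to the $H^{t_0+1}$ assumption. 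Induction in $j$, together with one more application of Proposition \ref{314} (1) at the outermost $G$, yields the desired bound with one, and only one, factor of $\mu^{3/4}$ and with the product $\prod_{m=1}^{j}\vert(\epsilon h_m,\beta k_m)\vert_{H^{t_0+1}}$ arising naturally from the $j$ successive product estimates.

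The main obstacle is twofold. First, keeping the power $\mu^{3/4}$ sharp: each intermediate appearance of $G$ or of $\wsurf$ could in principle contribute an extra $\mu^{3/4}$, so one must be careful to absorb all but one such factor into the bookkeeping for $\B$ (using the $\mu^{-1/4}$ conversion between $\nablag$ and $\B$ noted above), so that exactly one $\mu^{3/4}$ survives. Second, respecting the regularity window $0\le s\le t_0+1/2$: the product estimates must land in $H^{s+1/2}$ with $s+1/2\le t_0+1$, which is why $H^{t_0+1}$-control of all the directions $h_m,k_m$ is exactly the right hypothesis — one extra half-derivative of the coefficient is affordable, but anything more would force a stronger norm on the right-hand side than the statement allows.
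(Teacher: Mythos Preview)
First, a remark on scope: the paper does \emph{not} prove Proposition~\ref{328b}. The appendix states the result and refers the reader to \cite{david}, Chapter~3, for the proof. So there is no in-paper argument to compare yours to; what follows evaluates your proposal against the standard proof in \cite{david}.

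Your strategy --- reduce everything to the explicit first-order formula of Theorem~\ref{321} and then invoke Proposition~\ref{314}(1) --- runs into two genuine obstacles.

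\textbf{(a) The bottom variation is not covered by Theorem~\ref{321}.} That theorem gives the shape derivative of $G$ with respect to the \emph{surface} $\zeta$ only. The derivative with respect to $b$ does \emph{not} take the form ``$-\beta G(k\wsurf)-\beta\mu\nablag\cdot(k\Vd)$''; it involves the trace of the potential at the bottom and has a different structure (in \cite{david} it is obtained by differentiating the straightened elliptic problem, not by a surface identity). So your base case for the $k$-direction, and hence the whole induction in that direction, is not grounded.

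\textbf{(b) Even for the surface part, the derivative count does not close.} Applying Proposition~\ref{314}(1) to $G(h\wsurf)$ yields $\mu^{3/4}\,\vert\B(h\wsurf)\vert_{H^s}$. To finish you need $\vert\B(h\wsurf)\vert_{H^s}\lesssim \vert h\vert_{H^{t_0+1}}\vert\B\psi\vert_{H^s}$. Using the crude bound $\vert\B u\vert_{H^s}\le \mu^{-1/4}\vert u\vert_{H^{s+1/2}}$ and a product estimate forces control of $\vert\wsurf\vert_{H^{s+1/2}}$, which by the remark after Proposition~\ref{314} costs $\vert\B\psi\vert_{H^{s+1}}$ --- one full derivative too many. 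The same over-spending occurs for $\epsilon\mu\nablag\cdot(h\Vd)$: estimating it in $H^{s-1/2}$ needs $\vert\Vd\vert_{H^{s+1/2}}$, hence $\vert\nablag\psi\vert_{H^{s+1/2}}$, which again exceeds $\vert\B\psi\vert_{H^s}$. No simple $\mu$-bookkeeping repairs this: the issue is the Sobolev index, not the power of $\mu$.

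The proof in \cite{david} avoids both problems by working at the level of the straightened Laplace problem~\eqref{dirichletneumann}: one differentiates $\nabla^{\mu,\gamma}\cdot P(\Sigma_t^\epsilon)\nabla^{\mu,\gamma}\phi=0$ in $(\zeta,b)$, so that surface and bottom variations enter symmetrically through $dP(\Sigma_t^\epsilon)$, and then applies elliptic regularity for the variational solution. The $\mu^{3/4}$ factor and the $H^{t_0+1}$-control of $(h_m,k_m)$ come out of the coercivity and tame product estimates for the coefficient matrix, without ever composing $G$ with $\wsurf$. If you want to keep an ``explicit-formula'' flavour, you would need a genuine $\B$-product/commutator lemma of the type $\vert\B(hu)\vert_{H^s}\lesssim \vert h\vert_{H^{t_0+1}}\vert\B u\vert_{H^s}$ uniformly in $\mu$, which is not among the tools provided here and is in fact what the elliptic argument circumvents.
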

We  need the following commutator estimate:

\begin{proposition}\label{329}
Let $t_0>d/2$ and $\zeta, b \in H^{t_0+2}(\mathbb{R}^d)$ such that:  \begin{equation*} \exists h_0>0,\forall X\in\mathbb{R}^d,  \epsilon\zeta(X)-\beta b(X) +1 \geq h_0\end{equation*} 
For all $\underline{V}\in H^{t_0+1}(\mathbb{R}^d)^2$ and $u\in H^{1/2}(\mathbb{R}^d)$, one has 

\begin{equation*}
((\underline{V}\cdot\nabla^{\gamma} u),\frac{1}{\mu}Gu)\leq M\vert\underline{V}\vert_{W^{1,\infty}}\vert \mathfrak{P} u\vert_2^2,
\end{equation*}
where $M$ is a constant of the form $C(\frac{1}{h_0},\vert\zeta\vert_{H^{t_0+2}},\vert b\vert_{H^{t_0+2}})$.
\end{proposition}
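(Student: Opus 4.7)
The plan is to exploit the self-adjointness of $\tfrac{1}{\mu}G$ to turn the apparently critical pairing into a commutator plus a lower-order divergence term. Setting $I := (\underline{V}\cdot\nablag u,\tfrac{1}{\mu}Gu)_2$, an integration by parts in $X$ combined with the algebraic identity $\underline{V}\cdot\nablag(\tfrac{1}{\mu}Gu) = \tfrac{1}{\mu}G(\underline{V}\cdot\nablag u) - [\tfrac{1}{\mu}G,\underline{V}\cdot\nablag]u$ and the self-adjointness of $G$ yields the symmetrized identity
$$2I = -\bigl((\nablag\cdot\underline{V})u,\tfrac{1}{\mu}Gu\bigr)_2 + \bigl(u,[\tfrac{1}{\mu}G,\underline{V}\cdot\nablag]u\bigr)_2,$$
in which the dangerous order-one action of $\underline{V}\cdot\nablag$ on $\tfrac{1}{\mu}Gu$ has cancelled.

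The first term on the right is immediately amenable to Proposition \ref{318} applied with $s=0$, $\psi_1 = u$ and $\psi_2 = (\nablag\cdot\underline{V})u$; it then remains to establish a product-type bound $|\mathfrak{P}((\nablag\cdot\underline{V})u)|_2 \leq C|\nablag\cdot\underline{V}|_{L^\infty}|\mathfrak{P}u|_2$, which follows from the Fourier-multiplier form of $\mathfrak{P}$ (its symbol is pointwise dominated by $|\xi|$, uniformly in $\mu$) together with a standard paraproduct decomposition. This contributes a term of size $M|\underline{V}|_{W^{1,\infty}}|\mathfrak{P}u|_2^2$, as desired.

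The main obstacle is the commutator term $\bigl(u,[\tfrac{1}{\mu}G,\underline{V}\cdot\nablag]u\bigr)_2$, since both operators involved are genuinely first-order and naive pseudo-differential calculus does not deliver the required $\mu$-uniform bound from only $W^{1,\infty}$ control on $\underline{V}$. I would sidestep this by passing to the variational formulation of $G$ on the flat strip $\mathcal{S}$ provided by \eqref{dirichletneumann}: if $\phi$ denotes the transported harmonic extension of $u$, then $\tfrac{1}{\mu}(Gu,v)_2 = \tfrac{1}{\mu}\int_{\mathcal{S}} P(\Sigma_t^{\epsilon})\nabla^{\mu,\gamma}\phi\cdot\nabla^{\mu,\gamma}\widetilde{v}$ for any admissible $\widetilde{v}$ extending $v$ with vanishing normal trace at the bottom. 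Lifting $\underline{V}$ to $\mathcal{S}$ by extending it constantly in $z$ and choosing $\widetilde{v} := \underline{V}\cdot\nablag\phi$ as the natural extension of $\underline{V}\cdot\nablag u$, the commutator unfolds into a bilinear integral on $\mathcal{S}$ whose integrand carries exactly one derivative falling either on $\underline{V}$ or on the coefficient matrix $P(\Sigma_t^{\epsilon})$ (hence on $(\zeta,b)$). Cauchy--Schwarz on $\mathcal{S}$ together with the variational identity $\tfrac{1}{\mu}(Gu,u)_2 = \tfrac{1}{\mu}\int_{\mathcal{S}} P(\Sigma_t^{\epsilon})|\nabla^{\mu,\gamma}\phi|^2 \simeq |\mathfrak{P}u|_2^2$ from \eqref{equivanorme} then closes the estimate. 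The extra derivative falling on $P(\Sigma_t^{\epsilon})$ is precisely what forces $M$ to depend on $H^{t_0+2}$-norms of $\zeta$ and $b$, rather than $H^{t_0+1}$, in agreement with the statement.
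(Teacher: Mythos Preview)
The paper does not supply its own proof of this proposition; it is stated in the appendix with a reference to \cite{david}, Chapter~3. The argument there applies the variational identity \emph{directly} to $I=(\underline{V}\cdot\nablag u,\tfrac{1}{\mu}Gu)_2$: with $\phi$ the straightened harmonic extension of $u$ and the admissible extension $\tilde v=\underline{V}\cdot\nablag\phi$ (with $\underline V$ extended constant in $z$), one has $\mu I=\int_{\mathcal{S}}P(\Sigma_t^\epsilon)\nabla^{\mu,\gamma}\phi\cdot\nabla^{\mu,\gamma}(\underline V\cdot\nablag\phi)$. Expanding and writing the top-order piece $P\nabla^{\mu,\gamma}\phi\cdot(\underline V\cdot\nablag)\nabla^{\mu,\gamma}\phi$ as a total $X$-derivative leaves only integrals carrying one derivative of $\underline V$ or of $P(\Sigma_t^\epsilon)$, all bounded by $M\vert\underline V\vert_{W^{1,\infty}}\cdot\tfrac{1}{\mu}\Vert\nabla^{\mu,\gamma}\phi\Vert_{L^2(\mathcal S)}^2\simeq M\vert\underline V\vert_{W^{1,\infty}}\vert\mathfrak P u\vert_2^2$. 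This is exactly the computation you describe for the ``commutator term''---but note that your chosen extension $\tilde v=\underline V\cdot\nablag\phi$ has boundary trace $\underline V\cdot\nablag u$, so the bilinear integral you wrote down is $I$ itself, not $(u,[\tfrac{1}{\mu}G,\underline V\cdot\nablag]u)_2$.

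There is a genuine gap in your handling of the divergence term. The product estimate $\vert\mathfrak P(fu)\vert_2\le C\vert f\vert_{L^\infty}\vert\mathfrak P u\vert_2$ is false: in dimension one, take $\hat u=(2\delta)^{-1}\chi_{\{\vert\xi\vert<\delta\}}$, so that $\vert\mathfrak P u\vert_2^2\sim\delta\to 0$ while $u\to 1$ pointwise as $\delta\to 0$; for any fixed bump $f$ with $\mathfrak Pf\neq 0$ one then gets $\vert\mathfrak P(fu)\vert_2\to\vert\mathfrak Pf\vert_2>0$, and the claimed inequality blows up. Neither the crude bound $p(\xi)\le\vert\xi\vert$ nor a paraproduct decomposition repairs this, precisely because $\vert\mathfrak P u\vert_2$ does not control the low-frequency content of $u$. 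The remedy is simply to drop the symmetrization step: the variational computation you already sketched bounds $I$ directly, making Proposition~\ref{318} and the product estimate unnecessary here.
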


\bibliographystyle{plain}
\bibliography{these} 
The author has been partially funded by the ANR  project Dyficolti ANR-13-BS01-0003-01.
\end{document}